\theoremstyle{plain}
\newtheorem{theorem}{Theorem}[section]
\newtheorem{corollary}[theorem]{Corollary}
\newtheorem{lemma}[theorem]{Lemma}
\theoremstyle{definition}
\newtheorem{assumption}[theorem]{Assumption}
\newtheorem{definition}[theorem]{Definition}
\newtheorem{example}[theorem]{Example}
\newtheorem{notation}[theorem]{Notation}
\newtheorem{proposition-definition}[theorem]{Proposition-Definition}
\theoremstyle{remark}
\newtheorem{remark}[theorem]{Remark}
\numberwithin{equation}{section}
\def\into{\ensuremath{\hookrightarrow}}
\newcommand{\Db}{\mathrm{D}^{\mathrm{b}}} 
\newcommand{\DX}{\Db(X)} 
\newcommand{\DS}{\Db(S)} 
\newcommand{\KX}{\mathrm{K}_X} 
\newcommand{\KS}{\mathrm{K}_S} 
\newcommand{\T}{\mathcal{T}} 
\newcommand{\F}{\mathcal{F}} 
\newcommand{\OO}{\mathcal{O}} 
\newcommand{\A}{\mathcal{A}} 
\newcommand{\Stab}{\mathrm{Stab}} 
\newcommand{\RR}{\mathbb{R}}
\newcommand{\QQ}{\mathbb{Q}}
\newcommand{\CC}{\mathbb{C}}
\newcommand{\ZZ}{\mathbb{Z}}
\newcommand{\Hom}{\mathrm{Hom}} 
\newcommand{\Supp}{\mathrm{Supp}} 
\newcommand{\ch}{\mathrm{ch}}  
\newcommand{\td}{\mathrm{td}} 
\newcommand{\Coh}{\mathrm{Coh}} 
\newcommand{\e}{\mathrm{e}}  
\newcommand{\NS}{\mathrm{NS}} 
\newcommand{\amp}{{\mathrm{Amp}}}   
\newcommand{\nef}{{\mathrm{Nef}}} 
\newcommand{\sHom}{\mathcal{H}om} 
\begin{document}

\title[Bayer--Macr\`i decomposition on Bridgeland moduli spaces]{Bayer--Macr\`i decomposition on\\ Bridgeland moduli spaces over surfaces}
\author[W.-M.~Liu]{Wanmin Liu
}
\address{Current Address: Center for Geometry and Physics, Institute for Basic Science (IBS), Pohang 37673, Republic of Korea}
\email{wanminliu@gmail.com}
\date{Kyoto J. Math. Volume  58, Number 3 (2018), 595-621.}
\keywords{Bayer--Macr\`i map, Bridgeland stability condition, minimal model program, moduli space of complexes, wall-crossing}
\subjclass[2010]{14D20, (Primary); 18E30, 14E30 (Secondary)}

\begin{abstract}
We find a decomposition formula of the local Bayer--Macr\`i map for the nef line bundle theory on the Bridgeland moduli space over a surface. If there is a global Bayer--Macr\`i map, then such a decomposition gives a precise correspondence from Bridgeland walls to Mori walls. As an application, we compute the nef cone of the Hilbert scheme $S^{[n]}$ of $n$-points over special kinds of a fibered surface $S$ of Picard rank $2$.
\end{abstract}

\maketitle


\section{Introduction}\label{section-introduction}
Let $S$ be a smooth projective surface over $\CC$. Let $M$ be the Gieseker moduli space of semistable sheaves with fixed Chern character $\ch$ over $S$. One viewpoint for studying the birational geometry of $M$ is to study the classification of line bundles on $M$ and different cones inside its real N\'{e}ron--Severi group $N^1(M)$, such as the nef cone $\nef(M)$, and pseudoeffective cone $\overline{\mathrm{Eff}}(M)$. Another viewpoint is introduced by Bridgeland \cite{Bri07} with the idea of enlarging the category of coherent sheaves $\Coh(S)$ to its bounded derived version $\Db(S)$ and studying moduli spaces of Bridgeland semistable objects in $\Db(S)$.  Let $\sigma$ be a Bridgeland stability condition, and let $M_\sigma(\ch)$ be the moduli space of $\sigma$-semistable objects in $\Db(S)$ with the same invariant $\ch$. The collection of all stability conditions forms an interesting parameter space $\Stab(S)$, which is a $\CC$-manifold. Bridgeland identified $M$ as $M_\sigma(\ch)$, for $\sigma$ in a special chamber inside $\Stab(S)$. He envisioned that the wall-chamber structures of  $\Stab(S)$ will recover birational models of $M$.

When $S$ is the projective plane $\mathbb{P}^2$ and $\ch=(1,0,-n)$, the  moduli space $M$ is the Hilbert scheme $\mathbb{P}^{2[n]}$ of $n$-points over $\mathbb{P}^2$. Arcara, Bertram, Coskun, and Huizenga~\cite{ABCH13} found a precise relation between Bridgeland walls inside $\Stab(\mathbb{P}^2)$ with respect to $(1,0,-n)$ and Mori walls inside $\overline{\mathrm{Eff}}({\mathbb{P}^{2[n]}})$ (see Corollary~\ref{cor-ABCH}). Bertram and Coskun \cite{BC13} generalized the speculation to other rational surfaces. Bayer and Macr\`i \cite{BM14a, BM14b} linked the two viewpoints by establishing a line bundle theory on Bridgeland moduli spaces. Let $\sigma=\sigma_{\omega,\beta}$ be the stability condition constructed by Arcara and Bertram \cite{AB13}, which depends on an ample line bundle $\omega$ and another line bundle $\beta$ over $S$. Assume that $\sigma$ is in a chamber $\mathtt{C}$. Bayer and Macr\`i constructed a map by sending $\sigma$ to a nef line bundle $\ell_{\sigma}$ on $M_\sigma(\ch)$, which is called the local Bayer--Macr\`i map. The line bundle $\ell_{\sigma}$ only depends on the chamber $\mathtt{C}$. When $S$ is a K3 surface and $\ch$ is primitive, they constructed a global Bayer--Macr\`i map (by gluing the local Bayer--Macr\`i maps; see Definition~\ref{def-global-BM-map}),
$$\ell: \Stab^{\dag}(S)\to N^1(M),$$
sending a stability condition $\sigma$ to a line bundle $\ell_\sigma$ on $M$. The existence of the global Bayer--Macr\`i map is also known for the projective plane $\mathbb{P}^2$ with primitive Chern character $\ch$ by Li and Zhao \cite{LZ16}.

In this article, we find a decomposition of the line bundle $\ell_{\sigma_{\omega,\beta}}$. The decomposition is classified into two cases according to the given Chern character $\ch$. The case for objects supported in dimension $1$ is given in Lemma~\ref{lem-decomposition-dim1}. The case for objects supported in dimension $2$ is given in Lemma~\ref{lem-decomposition-dim2}. In this case, an equivalent decomposition is also obtained by Bolognese, Huizenga, Lin, Riedl, Schmidt, Woolf, and Zhao \cite[Proposition 3.8.]{BHL+16}. If there is a global Bayer--Macr\`i map, we then obtain the precise correspondences from Bridgeland walls to Mori walls for two such cases (see respectively, Theorems~\ref{thm-geometric-meaning-decomposition-dim1} and \ref{thm-geometric-meaning-decomposition-dim2}). By Mori walls, we mean the walls that appear on the stable base locus decomposition of the pseudoeffective cone $\overline{\mathrm{Eff}}(M)$. 

As an application of the main theorem, we compute the nef cone of the Hilbert scheme $S^{[n]}$ of $n$-points over special kinds of a fibered surface $S$ in Theorem~\ref{thm_nef-cone}. Here $S$ is either the Hirzebruch surface or an elliptic surface over $\mathbb{P}^1$ with a global section of Picard rank $2$. The example suggests that, to obtain the extremal nef line bundle, we cannot assume that the $\omega$ is parallel to $\beta$.
 
Some of the techniques discussed in this article have been partially generalized by Coskun and Huizenga \cite{CH15} to compute the nef cone of certain Gieseker moduli spaces.
 
\subsection*{Outline of the article} Section~\ref{section-Bridgeland-stability} is a brief review of the notion of Bridgeland stability conditions. Section~\ref{section-BM-theory} is a brief review of Bayer and Macr\`i's line bundle theory on Bridgeland moduli spaces. The main theorems on the Bayer--Macr\`i decomposition are given in Section~\ref{section-BM-decomposition}. In Section~\ref{section-toy-model}, we provide an application of the main theorem. Some background on the large-volume limit is given in Appendix~\ref{appendix-large-volume-limit}. Some parallel computations by using $\hat{Z}_{\omega,\beta}$ (see (\ref{eq-Z-twisted})) for a K3 surface are given in Appendix~\ref{appendix-K3}.

\subsection*{Acknowledgement}
This work is part of the author's Ph.D. dissertation at the Hong Kong University of Science and Technology. The author thanks his Ph.D. supervisor Wei-Ping Li for encouragement and guidance; Huai-Liang Chang, Naichung Conan Leung, Chunyi Li, Jason Lo, and Zhenbo Qin  and Xiaolei Zhao for helpful conversations and suggestions; Izzet Coskun for the invitation to the \emph{Graduate Student Workshop on Moduli Spaces and Bridgeland Stability} at the University of Illinois at Chicago in March 2013; Changzheng Li for the invitation to the Kavli Institute for the Physics and Mathematics of the Universe in October 2013; the organizers and participants of the workshop \emph{Geometry from Stability Conditions} at the University of Warwick in February 2015; and the anonymous referees for their valuable suggestions and comments. 

The author's work is supported by IBS-R003-D1.

\section{Bridgeland stability conditions}\label{section-Bridgeland-stability}
Let $S$ be a smooth projective surface over $\CC$, and let $\DS$ be the bounded derived category of coherent sheaves on $S$. Denote the Grothendieck group of $\DS$ by $K(S)$. 
A \emph{Bridgeland stability condition} (see \cite[Proposition 5.3]{Bri07}) $\sigma=(Z,\mathcal{A})$ on $\DS$ consists of a pair
$(Z,\mathcal{A})$, where $Z: K(S) \to \CC$ is a group homomorphism (called the \emph{central charge}) and $\mathcal{A}\subset\DS$ is the heart of a \emph{bounded $t$-structure} satisfying the following three properties.
\begin{enumerate}
\item[(1)] \emph{Positivity}. For any $0\neq E\in \mathcal{A}$ the central charge $Z(E)$ lies in the semiclosed upper half-plane $\RR_{>0}\cdot\e^{(0,1]\cdot i\pi}$. 
\end{enumerate}
Let $E\in \mathcal{A}\setminus\{0\}$. Define the \emph{Bridgeland slope} (might be $+\infty$-valued) and the \emph{phase} of $E$ as 
\begin{equation*}\label{eq-Bridgeland-slpoe}
\mu_{\sigma}(E):=\frac{-\Re(Z(E))}{\Im(Z(E))}; \quad \phi(E):=\frac{1}{\pi}\mathrm{arg}(Z(E))\in (0,1].
\end{equation*}
For nonzero $E, F\in \mathcal{A}$, we have the equivalent relation:
\begin{equation*}\label{BS}
\mu_{\sigma}(F)<(\leq)\mu_{\sigma}(E)\Longleftrightarrow \phi(F)<(\leq) \phi(E).
\end{equation*}
For $0\neq E\in \mathcal{A}$, we say $E$ is \emph{Bridgeland  (semi)stable} if for any subobject $0\neq F\subsetneq E$ ($0\neq F\subseteq E$) we have $\mu_{\sigma}(F)< (\leq)\mu_{\sigma}(E)$. \begin{enumerate}[resume]
\item[(2)] \emph{Harder--Narasimhan property}. Every object $E \in \mathcal{A}$ has a Harder--Narasimhan filtration $0 = E_0 \into E_1 \into \dots \into E_n = E$ such that the quotients $E_i/E_{i-1}$ are Bridgeland semistable with $\mu_{\sigma}(E_1/E_0) > \mu_{\sigma}(E_2/E_1) > \dots > \mu_{\sigma}(E_n/E_{n-1})$.
\item[(3)] \emph{Support property}. There is a constant $C>0$ such that, for all Bridgeland semistable objects $E\in \mathcal{A}$, we have
$\left\Vert E \right\Vert\leq C |Z(E)|$, where $\left\Vert \cdot \right\Vert$ is a fixed norm on $K(X)\otimes\RR$.
\end{enumerate}

\subsection{Bridgeland stability conditions on surfaces}
Let $S$ be a smooth projective surface. Fix $\omega,\beta\in N^1(S):=\NS(S)_{\RR}$ with $\omega$ ample. Define
\begin{equation*}\label{eq-Z}
Z_{\omega,\beta}(E):=-\int_{S} e^{-(\beta+\sqrt{-1}\omega)}.\ch(E).
\end{equation*}
For $E\in \Coh(S)$, denote its \emph{Mumford slope} by 
\begin{equation*}\label{eq-Mumford-slope}
\mu_{\omega}(E):=\left\{
\begin{array}{ll}
\frac{\omega\cdot\ch_1(E)}{\ch_0(E)} &\quad \text{if }\ch_0(E)\neq 0;\\
+\infty & \quad \text{otherwise.}
\end{array}
\right.
\end{equation*}
Let $\T_{\omega, \beta}\subset \Coh(S)$ be the subcategory of coherent sheaves whose Harder--Narasimhan (HN) factors (with respect to Mumford stability) are of Mumford slope strictly greater than $\omega.\beta$. Let $\F_{\omega, \beta}\subset \Coh(S)$ be the subcategory of coherent sheaves whose HN factors (with respect to Mumford stability) are of Mumford slope less than or equal to $\omega.\beta$. Then $(\T_{\omega, \beta},\F_{\omega, \beta})$ is a torsion pair of $\Coh(S)$ (see \cite{AB13}). Define the heart $\mathcal{A}_{\omega,\beta}$ as the tilt of this torsion pair:
\begin{equation*} 
\mathcal{A}_{\omega,\beta}:=\{E\in\Db(S):  H^{-1}(E)\in\F_{\omega,\beta}, H^0(E)\in\T_{\omega,\beta}, H^{p}(E)=0 \text{ otherwise}\}.
\end{equation*}
\begin{lemma}\cite[Corollary 2.1]{AB13} \label{BriStabCond} Fix $\omega,\beta\in \NS(S)_{\RR}$ with $\omega$ ample. Then
$\sigma_{\omega, \beta}:=(Z_{\omega, \beta}, \mathcal{A}_{\omega, \beta})$ is a Bridgeland stability condition.
\end{lemma}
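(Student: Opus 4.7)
The plan is to verify the three defining axioms (positivity, Harder-Narasimhan, support) directly for the pair $(Z_{\omega,\beta},\mathcal{A}_{\omega,\beta})$. As preparation, expanding $\mathrm{e}^{-(\beta+\sqrt{-1}\omega)}$ and collecting real and imaginary parts yields
\begin{equation*}
Z_{\omega,\beta}(E) = \tfrac{\omega^{2}-\beta^{2}}{2}\ch_{0}(E) + \beta\cdot\ch_{1}(E) - \ch_{2}(E) + \sqrt{-1}\,\omega\cdot\bigl(\ch_{1}(E)-\beta\,\ch_{0}(E)\bigr).
\end{equation*}
The crucial observation is that $\Im Z_{\omega,\beta}(E)$ is precisely the quantity that discriminates $\T_{\omega,\beta}$ from $\F_{\omega,\beta}$ at the level of Mumford slope.

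For positivity, I would use the cohomology triangle $H^{-1}(E)[1]\to E\to H^{0}(E)$ to reduce to two cases: $T\in\T_{\omega,\beta}$ and $F[1]$ with $F\in\F_{\omega,\beta}$. A torsion-free $T\in\T_{\omega,\beta}$ has every Mumford-HN factor of slope $\mu_{\omega}>\omega\cdot\beta$, so $\Im Z_{\omega,\beta}(T)>0$. A pure-torsion sheaf automatically lies in $\T_{\omega,\beta}$ and gives either $\Im Z_{\omega,\beta}(T)>0$ (support of dimension one) or $\Im Z_{\omega,\beta}(T)=0$ and $\Re Z_{\omega,\beta}(T)=-\ch_{2}(T)<0$ (support of dimension zero). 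For $F\in\F_{\omega,\beta}$ (necessarily torsion-free) the slope bound gives $\Im Z_{\omega,\beta}(F[1])\geq 0$; the boundary case $\mu_{\omega}(F)=\omega\cdot\beta$ on a Mumford-semistable factor is then handled by the classical Bogomolov inequality $\ch_{1}(F)^{2}\geq 2\ch_{0}(F)\ch_{2}(F)$ in combination with the Hodge index theorem, which forces $\Re Z_{\omega,\beta}(F[1])<0$.

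For the Harder-Narasimhan property I would invoke Bridgeland's general criterion, which reduces everything to Noetherianity of $\mathcal{A}_{\omega,\beta}$ together with a local discreteness statement for $Z_{\omega,\beta}$. The rational case yields Noetherianity immediately from discreteness of $\Im Z_{\omega,\beta}$, while the general real case follows by deforming $\beta$ slightly to a rational class and checking that a putative infinite descending chain would give an infinite descending chain after the deformation. For the support property, a uniform bound $\|E\|\leq C|Z_{\omega,\beta}(E)|$ on Bridgeland-semistable objects is obtained by applying the Bogomolov inequality to the Mumford-HN factors of $H^{0}(E)$ and $H^{-1}(E)$, packaging the result as a definite quadratic form on $K(S)\otimes\RR$ that controls $\ch$ in terms of $|Z_{\omega,\beta}|$.

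The main obstacle is producing the Bogomolov-type inequality in the correct direction on the tilted heart: both positivity on the boundary $\mu_{\omega}=\omega\cdot\beta$ and the support-property norm bound rely on using $\omega$ as a polarisation in conjunction with the Hodge index theorem, which is exactly where the ampleness hypothesis on $\omega$ does its real work. Once this inequality is in hand, the three axioms follow along the outlines above.
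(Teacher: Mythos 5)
The paper itself gives no proof of this lemma; it is quoted from \cite[Corollary 2.1]{AB13}, so the only meaningful comparison is with the argument of that reference, and your outline does follow the same standard route. Your expansion of $Z_{\omega,\beta}$ is correct, and the positivity step --- splitting along the torsion pair, observing that $\Im Z_{\omega,\beta}$ is exactly the slope discriminator for $(\T_{\omega,\beta},\F_{\omega,\beta})$, and handling the boundary case $\mu_{\omega}(F)=\omega\cdot\beta$ by the Bogomolov inequality $\ch_1^2\geq 2\ch_0\ch_2$ for the Mumford-semistable factors together with the Hodge index theorem applied to the class $\ch_1(F)-\ch_0(F)\beta\in\omega^{\perp}$ --- is precisely the heart of Arcara--Bertram's argument, and this is where the ampleness of $\omega$ is used.

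Two of your steps, however, are asserted in a form that would not survive scrutiny. First, Noetherianity of $\mathcal{A}_{\omega,\beta}$ is \emph{not} ``immediate from discreteness of $\Im Z_{\omega,\beta}$'': discreteness only forces the imaginary part to stabilize along a chain, and one still needs a genuine argument (using Noetherianity of $\Coh(S)$ and control of torsion and rank of the cohomology sheaves) to rule out infinite ascending chains. More seriously, your treatment of irrational parameters --- transporting a putative infinite chain after ``deforming $\beta$ slightly to a rational class'' --- does not work as stated, because the heart itself changes with $\beta$: the torsion pair moves, so subobjects in $\mathcal{A}_{\omega,\beta}$ need not remain subobjects in the deformed heart. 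The standard remedies are either a direct HN argument valid for arbitrary real $\omega,\beta$, or proving the support property at rational parameters and then reaching irrational ones via Bridgeland's deformation theorem. Second, for the support property you need a Bogomolov-type inequality for the $\sigma_{\omega,\beta}$-semistable objects themselves; applying the classical Bogomolov inequality to the Mumford-HN factors of $H^{-1}(E)$ and $H^{0}(E)$ does not directly give $\left\Vert E\right\Vert\leq C\,|Z_{\omega,\beta}(E)|$, since the discriminant is not additive over those factors (there are cross terms), and the known proofs control this by an induction toward the large volume limit. So the skeleton is the right one, but the HN and support steps must be carried out with the actual arguments from the literature rather than the shortcuts you indicate.
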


\subsection{Logarithm Todd class} Let $X$ be a smooth projective variety over $\CC$. Let us introduce a formal variable $t$, and write 
\begin{eqnarray*}
\td(X)(t)&:=&1+(-\frac{1}{2}\KX) t+\frac{1}{12}\left(\frac{3}{2}\KX^2-\ch_2(X)\right)t^2\\
& &+\left(-\frac{1}{24}\KX.\left(\frac{1}{2}\KX^2-\ch_2(X)\right)\right)t^3+\text{ higher order of }t^4.
\end{eqnarray*}
Taking the logarithm with respect to $t$, and expressing it in the power series of $t$, we obtain
\begin{equation*}
 \ln \td(X)(t) =-\frac{1}{2}\KX t-\frac{1}{12}\ch_2(X) t^2+0\cdot t^3+\text{ higher order of }t^4.
\end{equation*}
In particular,  the \emph{logarithm Todd class} of a smooth projective surface $S$ or a smooth projective threefold $X$ is given, respectively, by
\begin{eqnarray*}
\ln\td(S) &:=& (0, -\frac{1}{2}\KS, -\frac{1}{12}\ch_2(S))\quad \text{ or}\\
\ln\td(X) &:=& (0, -\frac{1}{2}\KX, -\frac{1}{12}\ch_2(X), 0).
\end{eqnarray*}

\subsection{The Mukai pairing}\label{section-Mukai-bilinear-form}
We refer to \cite[Section 5.2]{Huy06} for the details. Let $X$ still be a smooth projective variety of dimension $n$ over $\CC$. Define the Mukai vector of an object $E\in \DX$ by
\begin{equation*}\label{Mukaivector}
v(E):= \ch(E).e^{\frac{1}{2}\ln\td(X)}\in \oplus H^{p,p}(X)\cap H^{2p}(X,\QQ)=:H^*_{\mathrm{alg}}(X,\QQ).
\end{equation*}
Let $A(X)$ be the Chow ring of $X$. The Chern character gives a mapping $\ch:K(X)\to A(X)\otimes \QQ$. There is a natural involution $^{\ast}: A(X) \to A(X)$,
\begin{equation*}\label{def-involution}
v=(v_0,\dots,v_i,\dots,v_n)\mapsto v^{\ast}:=(v_0,\dots,(-1)^i v_i,\dots,(-1)^n v_n).
\end{equation*}
We call $v^{\ast}$ the \emph{Mukai dual} of $v$. Denote $E^{\vee}:=R \sHom(E,\OO_S)$. We have
\begin{equation*}
\ch(E^\vee)=(\ch(E))^\ast,\quad v(E^{\vee})=(v(E))^{\ast}.e^{-\frac{1}{2}\KX}.
\end{equation*}
Define the \emph{Mukai pairing} for two Mukai vectors $w$ and $v$ by
\begin{equation}\label{Mukaipairing}
\langle w, v \rangle_X:=-\int_X w^{\ast}.v.e^{-\frac{1}{2}\KX}. 
\end{equation}
The Hirzebruch--Riemann--Roch theorem gives
\begin{equation*}\label{eq-HRR}
\chi(F,E)=\int_X \ch(F^{\vee}).\ch(E).\td(X)=-\langle v(F), v(E) \rangle_X.
\end{equation*}
For a smooth projective surface $S$, the Mukai vector of $E\in \DS$ is
\begin{eqnarray}\label{Mukaivectorsurface}
v(E)&=&(v_0(E), v_1(E),  v_2(E))\nonumber\\
&=&(\ch_0,\ch_1-\frac{1}{4}\ch_0\KS,\ch_2-\frac{1}{4}\ch_1.\KS+\frac{1}{2}\ch_0\left(\chi(\OO_S)-\frac{1}{16}\KS^2\right)). 
\end{eqnarray}
By (\ref{Mukaipairing}) the Mukai pairing of $w=(w_0, w_1, w_2)$ and $v=(v_0, v_1, v_2)$ is
\begin{equation}\label{Mukaipairingsurface}
\langle w, v \rangle_S 
= w_1.v_1-w_0 (v_2-\frac{1}{2}v_1.\KS)-v_0 (w_2+\frac{1}{2}w_1.\KS)-\frac{1}{8}w_0 v_0\KS^2. 
\end{equation}

\subsection{Central charge in terms of the Mukai pairing}\label{section-central-charge-Mukai}
\begin{lemma}\label{lem-central-charge} 
The central charge $Z_{\omega,\beta}$ has the expression:
\begin{equation}\label{Omega-Z}
Z_{\omega,\beta}(E)= \langle \mho_{Z_{\omega,\beta}}, v(E) \rangle_S, \text{ where }
\mho_{Z_{\omega,\beta}} := e^{\beta-\frac{3}{4}\KS+\sqrt{-1}\omega+\frac{1}{24}\ch_2(S)}.
\end{equation}
Moreover, the vector $\mho_{Z_{\omega,\beta}}$ (or simply $\mho_Z$) is given by
\begin{eqnarray}\label{Omega-Z-Mukai}
\mho_{Z_{\omega,\beta}}
&=& \left(1,\beta-\frac{3}{4}\KS, 
-\frac{1}{2}\omega^2+\frac{1}{2}(\beta-\frac{3}{4}\KS)^2-\frac{1}{2}\left(\chi(\OO_S)-\frac{1}{8}\KS^2\right)\right)  \nonumber\\
& & + \ \sqrt{-1}\left(0,\omega,(\beta-\frac{3}{4}\KS).\omega\right).
\end{eqnarray}
\end{lemma}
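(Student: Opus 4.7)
The plan is to verify both assertions by direct computation in the Chow ring $A(S)\otimes\CC$, working with formal exponentials truncated at the degree of $S$. The key algebraic observation is that the involution $v\mapsto v^\ast$ is a graded ring automorphism (multiplication by $(-1)^i$ on the degree-$i$ component, and degrees add under the cup product), hence $(e^x)^\ast = e^{x^\ast}$ for any $x\in A(S)\otimes\CC$. Applied to $\mho_{Z_{\omega,\beta}} = e^{\beta - \frac{3}{4}\KS + \sqrt{-1}\omega + \frac{1}{24}\ch_2(S)}$, the involution flips the degree-$1$ part and fixes the degree-$2$ part, so
\[
\mho_{Z_{\omega,\beta}}^\ast = e^{-(\beta+\sqrt{-1}\omega) + \frac{3}{4}\KS + \frac{1}{24}\ch_2(S)}.
\]

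Next, I would substitute this into the Mukai pairing (\ref{Mukaipairing}) together with $v(E)=\ch(E)\cdot e^{\frac{1}{2}\ln\td(S)}$, getting
\[
\langle \mho_{Z_{\omega,\beta}}, v(E)\rangle_S = -\int_S e^{-(\beta+\sqrt{-1}\omega)}\cdot \ch(E)\cdot e^{Y},
\]
where $Y = \frac{3}{4}\KS + \frac{1}{24}\ch_2(S) + \frac{1}{2}\ln\td(S) - \frac{1}{2}\KS$. Using the formula $\ln\td(S)=(0,-\frac{1}{2}\KS,-\frac{1}{12}\ch_2(S))$ recorded above, one checks component-by-component that $Y = 0$ in $A(S)$, so $e^Y = 1$ and the integrand reduces to $e^{-(\beta+\sqrt{-1}\omega)}\cdot\ch(E)$, which is exactly $-Z_{\omega,\beta}(E)$ by the definition of $Z_{\omega,\beta}$. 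This gives (\ref{Omega-Z}).

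For the explicit expression (\ref{Omega-Z-Mukai}), I would simply expand the formal exponential in degrees $0,1,2$. The degree-$0$ and degree-$1$ components are immediate; the degree-$2$ component is
\[
\tfrac{1}{2}\bigl(\beta-\tfrac{3}{4}\KS+\sqrt{-1}\omega\bigr)^{2}+\tfrac{1}{24}\ch_2(S),
\]
whose imaginary part is $(\beta-\tfrac{3}{4}\KS).\omega$ and whose real part is $\tfrac{1}{2}(\beta-\tfrac{3}{4}\KS)^2 - \tfrac{1}{2}\omega^2 + \tfrac{1}{24}\ch_2(S)$. To identify $\tfrac{1}{24}\ch_2(S)$ with the term $-\tfrac{1}{2}(\chi(\OO_S)-\tfrac{1}{8}\KS^2)$ appearing in (\ref{Omega-Z-Mukai}), I would invoke Noether's formula $\chi(\OO_S)=\tfrac{1}{12}(\KS^2+c_2(S))$ together with $\ch_2(S)=\tfrac{1}{2}\KS^2-c_2(S)$ (using $c_1(S)=-\KS$); eliminating $c_2(S)$ yields $\tfrac{1}{24}\ch_2(S)=\tfrac{1}{16}\KS^2-\tfrac{1}{2}\chi(\OO_S)$, which is the claimed form.

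The whole argument is a direct but bookkeeping-heavy verification; there is no genuine obstacle, but one has to be disciplined about two places where slips are easy: (a) the commutation of the involution with the formal exponential, which is what makes the proof a one-line manipulation rather than a messy expansion, and (b) the algebraic identity $Y=0$, which is precisely the reason the constant $-\tfrac{3}{4}\KS+\tfrac{1}{24}\ch_2(S)$ is built into $\mho_{Z_{\omega,\beta}}$ in the first place.
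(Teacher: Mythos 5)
Your proposal is correct and amounts to essentially the same computation as the paper's proof, just run in the opposite direction: the paper rewrites the integral defining $Z_{\omega,\beta}(E)$ by inserting $\sqrt{\td(S)}\cdot\sqrt{\td(S)}\cdot\e^{-\ln\td(S)}$ and reads off $\mho_{Z_{\omega,\beta}}=v(F)$, while you start from the stated $\mho_{Z_{\omega,\beta}}$, use $(\e^{x})^{\ast}=\e^{x^{\ast}}$ to compute its Mukai dual, and check that the leftover exponent vanishes. Both arguments rest on exactly the same ingredients, namely the formula for $\ln\td(S)$ and Noether's formula $-\frac{1}{12}\ch_2(S)=\chi(\OO_S)-\frac{1}{8}\KS^2$, so no further comparison is needed.
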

\begin{proof}
We have
\begin{eqnarray*}
Z_{\omega,\beta}(E)
&=& -\int_S e^{-(\beta+\sqrt{-1}\omega)}.\ch(E)\nonumber\\
&=& -\int_S e^{-\left(\beta+\ln\td(S)+\sqrt{-1}\omega\right)}.\sqrt{\td(S)}.\ch(E).\sqrt{\td(S)}. \nonumber\\
\end{eqnarray*}
Denote $\ch(F^\vee):=e^{-\left(\beta+\ln\td(S)+\sqrt{-1}\omega\right)}$. Then
\begin{eqnarray*}
\ch(F)^{*}&=& \ch(F^\vee)=e^{-\left(\beta-\frac{1}{2}\KS+\sqrt{-1}\omega\right)+\frac{1}{12}\ch_2(S)}\\
&=& \left(1,-(\beta-\frac{1}{2}\KS+\sqrt{-1}\omega),\frac{1}{2}(\beta-\frac{1}{2}\KS+\sqrt{-1}\omega)^2+\frac{1}{12}\ch_2(S)\right).
\end{eqnarray*}
So
\begin{eqnarray*}
\ch(F)&=& \left(1,(\beta-\frac{1}{2}\KS+\sqrt{-1}\omega),\frac{1}{2}(\beta-\frac{1}{2}\KS+\sqrt{-1}\omega)^2+\frac{1}{12}\ch_2(S)\right)\\
&=& e^{\left(\beta-\frac{1}{2}\KS+\sqrt{-1}\omega\right)+\frac{1}{12}\ch_2(S)}.
\end{eqnarray*}
Therefore,
\begin{equation*}
Z_{\omega,\beta}(E)
= -\int_S \ch(F^\vee).\sqrt{\td(S)}.\ch(E).\sqrt{\td(S)}=\langle v(F), v(E) \rangle_S.
\end{equation*}
So 
$$\mho_{Z_{\omega,\beta}}= v(F)=\ch(F).e^{\frac{1}{2}\ln\td(S)}=e^{\beta-\frac{3}{4}\KS+\sqrt{-1}\omega+\frac{1}{24}\ch_2(S)}.$$
By using Noether's formula
\begin{equation*}\label{eq-Noether}
-\frac{1}{12}\ch_2(S)=\chi(\OO_S)-\frac{1}{8}\KS^2
\end{equation*}
and direct computation, we get the concrete expression of $\mho_Z$.
\end{proof}

Denote by $\Stab(S)$ the collection of all Bridgeland stability conditions. It is a $\CC$-manifold of dimension $K_{\mathrm{num}}(S)\otimes\mathbb{C}$, with two group actions: a left action by $\mathrm{Aut}(\DS)$ and a right action by $\widetilde{\mathrm{GL}_2^+}(\RR)$ (see \cite[Lemma 8.2]{Bri07}). The stability $\sigma$ is said to be \emph{geometric} if all skyscraper sheaves $\OO_x$, $x\in S$, are $\sigma$-stable of the same phase. We can set the phase to be $1$ by a right group action. Denote by $\Stab^{\dag}(S)\subset \Stab(S)$ the connected component containing the geometric stability conditions. The stability $\sigma$ is said to be \emph{numerical} if the central charge $Z$ takes the form $Z(E)=\langle \pi(\sigma),v(E)\rangle_{S}$ for some vector $\pi(\sigma)\in K_{\mathrm{num}}(S)\otimes\mathbb{C}$. As in \cite[Remark 4.33]{Huy14}, we further \emph{assume} the numerical Bridgeland stability factors through $K_\mathrm{num}(S)_{\QQ}\otimes \CC \to H^*_{\mathrm{alg}}(S,\QQ)\otimes\CC$. Therefore $\pi(\sigma)\in H^*_{\mathrm{alg}}(S,\QQ)\otimes\CC$. For a numerical geometric stability condition with skyscraper sheaves of phase $1$, the heart $\A$ must be of the form $\A_{\omega,\beta}$ (see \cite[Proposition 10.3]{Bri07} and Huybrechts \cite[Theorem 4.39]{Huy14}). Therefore, Lemma~\ref{lem-central-charge} gives
\begin{equation}
\label{eq-forgetful-map}
\pi(\sigma_{\omega,\beta})=\mho_{Z_{\omega,\beta}}\in H^*_{\mathrm{alg}}(S,\QQ)\otimes\CC.
\end{equation}

\subsection{Bertram's nested wall theorem} \label{section-Maciocia} 
We follow the notation from \cite[Section 2]{Mac14} (but use $H$ instead of $\omega$ therein). Fix an ample divisor $H$ and another divisor $\gamma\in H^{\perp}$, that is,  $H.\gamma=0$.
Denote
\begin{equation*}\label{eq-g-d}
g:=H^2, \quad -d:=\gamma^2.
\end{equation*} 
It is known by the \emph{Hodge index theorem} that $d\geq 0$ and that $d=0$ if and only if $\gamma=0$. Let $\ch=(\ch_0,\ch_1,\ch_2)$ be of Bogomolov type, that is, $\ch_1^2-2\ch_0\ch_2\geq 0$. Write it as
\begin{equation*}\label{eq-ch-Maciocia}
\ch=(\ch_0,\ch_1,\ch_2):=(x,y_1 H+ y_2 \gamma + \delta,z),
\end{equation*}
where $y_1$, $y_2$ are real coefficients and $\delta \in \{ H,\gamma \}^{\perp}$. Write the potential destabilizing Chern character as
\begin{equation*}\label{eq-ch'-Maciocia}
\ch'=(\ch_0',\ch_1',\ch_2'):=(r,c_1 H+ c_2 \gamma+\delta',\chi),
\end{equation*}
where $c_1$, $c_2$ are real coefficients and $\delta' \in \{ H,\gamma \}^{\perp}$. A \emph{potential wall} is defined as
$$W(\ch,\ch'):=\{\sigma\in \Stab(S)|\, \mu_\sigma(\ch)=\mu_\sigma(\ch')\}.$$
A potential wall $W(\ch,\ch')$ is a \emph{Bridgeland wall} if there are a $\sigma\in\Stab(S)$ and objects $E, F\in\A_\sigma$ such that $\ch(E)=\ch$, $\ch(F)=\ch'$ and $\mu_\sigma(E)=\mu_\sigma(F)$. There is a wall-chamber structure on $\Stab(S)$ with respect to $\ch$ (see \cite{Bri07,Bri08,Tod08}). Bridgeland walls are $\mathbb{R}$-codimension $1$ in $\Stab(S)$ and separate $\Stab(S)$ into chambers. Let $E$ be an object that is $\sigma_0$-stable for a stability condition $\sigma_0$ in some chamber $\mathtt{C}$. Then $E$ is $\sigma$-stable for any $\sigma\in\mathtt{C}$. 
Choose
\begin{equation}\label{eq-coordinate}
\left\{
\begin{array}{ll}
\omega:=tH, \\
\beta:=sH+u\gamma,
\end{array}
\right.
\end{equation}
for some real numbers $t$, $s$, $u$, with $t$ positive. With a sign choice of $\gamma$, we further assume $u\geq 0$. There is a half-three-space of stability conditions
\begin{equation*}\label{eq-Maciocia-3space}
\Omega_{\omega,\beta}=\Omega_{tH,sH+u\gamma}:=
\{ \sigma_{tH,sH+u\gamma}\ |\ t>0, u\geq 0 \} \subset \Stab^{\dag}(S),
\end{equation*}
which should be considered to be the $u$-indexed family of 
half-planes
$$\Pi_{(H,\gamma,u)}:=\{ \sigma_{tH,sH+u\gamma}\ |\ t>0,\ u \text{ is fixed} \}.$$
\begin{definition} \label{def-coordinate}
A \emph{frame} with respect to the triple $(H,\gamma,u)$ is a choice of an ample divisor $H$ on $S$, another divisor $\gamma\in H^\perp$, and a nonnegative number $u$ such that the stability conditions $\sigma_{\omega,\beta}$ are on the half-plane $\Pi_{(H,\gamma,u)}$ with $(s,t)$-coordinates as in (\ref{eq-coordinate}). We simply call this fixing a frame $(H,\gamma,u)$, and write $\sigma_{s,t}:=\sigma_{tH,sH+u\gamma}$.
\end{definition}

\begin{theorem}[Bertram's nested wall theorem in $(s,t)$-model]\label{thm-Maciocia} 
\cite[Section 2]{Mac14}
Fix a frame $(H,\gamma,u)$. The potential walls $W(\ch,\ch')$ (for the fixed $\ch$ and different potential destabilizing Chern characters $\ch'$) in the $(s,t)$-half-plane $\Pi_{(H,\gamma,u)}$ ($t>0$) are given by nested semicircles with center $(C,0)$ and radius $R=\sqrt{D+C^2}$:
\begin{equation}\label{eq-wall-equation}
(s-C)^2+t^2=D+C^2,
\end{equation}
where $C=C(\ch,\ch')$ and $D=D(\ch,\ch')$ are given by
\begin{equation}\label{eq-C_sigma}
C(\ch,\ch'):=\frac{x\chi-rz +ud(xc_2-ry_2)}{g(xc_1-ry_1)},
\end{equation}
\begin{equation}\label{eq-D_sigma}
D(\ch,\ch'):=\frac {2zc_1-2c_{2}udy_{1}-xu^{2}dc_1+2y_{2}udc_{1}-2\chi y_{1}+ru^{2}dy_{1}}{g(xc_1-ry_1)}.
\end{equation}
\begin{itemize}
\item If $\ch_0=x\neq 0$, we have
\begin{eqnarray}
D&=&-\frac{2y_1}{x}C+\frac{ud(2y_2-ux)+2z}{gx} \label{eq-D-C}\\
&=&-\frac{2y_1}{x}C+(\frac{y_1^2}{x^2}-F), \label{eq-D-C-F} 
\end{eqnarray}
where $F=F(\ch)$ is independent of $\ch'$,
\begin{equation}\label{eq-F}
F(\ch):=\frac{d}{g}\left(u-\frac{y_2}{x}\right)^2+\frac{1}{x^2 g}(y_1^2 g - y_2^2 d - 2xz).
\end{equation}
Moreover, if $\ch$ is of Bogomolov type, that is, $\ch_1^2-2\ch_0\ch_2\geq 0$, then $F(\ch)\geq 0$ for all $u$.

\item If $\ch_0=0$ and $\ch_1.H>0$, that is, $x=0$ and $y_1>0$, then $\ch_0'=r\neq 0$, and $C=\frac{z+duy_2}{gy_1}$ is independent of $\ch'$. We have
\begin{eqnarray}
D&=&-\frac{2c_1}{r}C+\frac{ud(2c_2-ur)+2\chi}{gr}  \label{eq-D-C-r} \\
&=&-\frac{2c_1}{r}C+(\frac{c_1^2}{r^2}-F'), \label{eq-D-C-r-F} 
\end{eqnarray}
where $F'=F'(\ch')$ is independent of $\ch$,
\begin{equation}\label{eq-F'}
F'(\ch'):=\frac{d}{g}\left(u-\frac{c_2}{r}\right)^2+\frac{1}{r^2 g}(c_1^2 g - c_2^2 d - 2r\chi).
\end{equation}
Moreover, if $\ch'$ is of Bogomolov type, then $F(\ch')\geq 0$ for all $u$.
\end{itemize}
\end{theorem}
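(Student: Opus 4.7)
The plan is a direct coordinate computation. From $Z_{\omega,\beta}(E)=-\int_S e^{-(\beta+i\omega)}\ch(E)$, only the degree-$4$ terms of the expansion survive on a surface, so separating real and imaginary parts gives
\[
\Im Z(\ch)=\omega.\ch_1-(\beta.\omega)\ch_0,\qquad \Re Z(\ch)=\beta.\ch_1-\ch_2-\tfrac12(\beta^2-\omega^2)\ch_0.
\]
Substituting $\omega=tH$, $\beta=sH+u\gamma$ and using $H^2=g$, $\gamma^2=-d$, $H.\gamma=0$, $\delta.H=\delta.\gamma=0$, I obtain $\Im Z(\ch)=tg(y_1-sx)$ and $\Re Z(\ch)=sy_1g-udy_2-z+\tfrac{x}{2}(t^2g-s^2g+u^2d)$, with analogous expressions for $\ch'$.

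\textbf{Deriving the circle.} The wall condition $\mu_\sigma(\ch)=\mu_\sigma(\ch')$ is equivalent to $\Re Z(\ch)\,\Im Z(\ch')=\Re Z(\ch')\,\Im Z(\ch)$. I divide by $tg$ and expand in powers of $s,t$. The cubic $s^3$ and $st^2$ contributions cancel identically, and the $s^2$ and $t^2$ coefficients both reduce to $\tfrac{g(xc_1-ry_1)}{2}$, producing the single combined contribution $\tfrac{g(xc_1-ry_1)}{2}(s^2+t^2)$. In the generic case $xc_1\ne ry_1$, dividing through yields an equation of the form $(s-C)^2+t^2=C^2+D$; matching the coefficient of $s$ and the constant term recovers (\ref{eq-C_sigma}) and (\ref{eq-D_sigma}) verbatim.

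\textbf{Relation between $D$ and $C$, and positivity of $F$.} Assuming $x\ne 0$, I compute $D+\tfrac{2y_1}{x}C$ directly: the $\chi$-terms and $c_2$-terms cancel pairwise and a factor $(xc_1-ry_1)$ drops from the numerator, leaving the $\ch'$-independent expression $\tfrac{2z+ud(2y_2-xu)}{xg}$, which is exactly (\ref{eq-D-C}). Completing the square in $u$ recasts the remainder as $\tfrac{y_1^2}{x^2}-F$, giving (\ref{eq-D-C-F}) with $F$ as in (\ref{eq-F}). For $F\ge 0$ I use $\ch_1^2=y_1^2g-y_2^2d+\delta^2$ to write $y_1^2g-y_2^2d-2xz=(\ch_1^2-2\ch_0\ch_2)-\delta^2$; the first summand is non-negative by Bogomolov, while $-\delta^2\ge 0$ and $d\ge 0$ both follow from the Hodge index theorem applied to the classes $\delta,\gamma\in H^\perp$ with $H$ ample.

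\textbf{Rank-zero mirror and main obstacle.} In the case $x=0$, $y_1>0$ the wall equation forces $r\ne 0$ (otherwise $xc_1-ry_1=0$ and no circle is cut out), and the roles of $\ch$ and $\ch'$ swap in the bookkeeping of the previous two steps: the coefficient of $s$ now depends only on $\ch$, producing a $C$ independent of $\ch'$ and the mirror formulas (\ref{eq-D-C-r}) and (\ref{eq-D-C-r-F}), with $F'\ge 0$ by the same Bogomolov-plus-Hodge-index argument applied to $\ch'$. The entire proof is bookkeeping; the only genuinely conceptual step is the positivity of $F$ and $F'$, where the Bogomolov inequality and the Hodge index theorem on the $H$-orthogonal components must be combined. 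That is the one place I expect to have to think, rather than calculate.
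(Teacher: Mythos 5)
Your computation is correct, and it checks out line by line: the real and imaginary parts of $Z_{\omega,\beta}$ in the frame $(H,\gamma,u)$, the cancellation of the cubic terms, the common coefficient $\tfrac{g(xc_1-ry_1)}{2}$ of $s^2$ and $t^2$, the resulting formulas (\ref{eq-C_sigma}) and (\ref{eq-D_sigma}), the cancellations giving (\ref{eq-D-C}), the completion of the square giving (\ref{eq-D-C-F}) and (\ref{eq-F}), the mirror relations (\ref{eq-D-C-r}), (\ref{eq-D-C-r-F}), and the positivity of $F$, $F'$ via $y_1^2g-y_2^2d-2xz=(\ch_1^2-2\ch_0\ch_2)-\delta^2$ together with the Hodge index theorem. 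Your route differs from the paper only in that the paper does not reprove the theorem at all: it cites \cite[Section 2]{Mac14} for the wall equation and the rank-nonzero case, and merely remarks that the two new identities (\ref{eq-D-C-r}) and (\ref{eq-D-C-r-F}) are an easy exercise; you have instead given a self-contained derivation of everything, which is essentially Maciocia's own computation reproduced in this paper's notation. The one piece of the statement you do not address explicitly is the adjective \emph{nested}: it is part of the claim, but it follows in one line from what you have already established --- for $x\neq 0$, two distinct semicircles (\ref{eq-wall-equation}) satisfying (\ref{eq-D-C-F}) could only intersect at $s=\tfrac{y_1}{x}$, where $t^2=-F\leq 0$, so they do not meet in the half-plane $t>0$; for $x=0$ the walls are parallel lines in the $(s,q)$-model by (\ref{eq-wall-parallel-lines}), hence again nested semicircles in $(s,t)$. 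Adding that sentence, and a remark that the degenerate case $xc_1=ry_1$ (which your formulas exclude) gives no semicircular wall, would make the argument complete.
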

\begin{proof}
We refer to Maciocia \cite[Section 2]{Mac14}. The only unproved parts are (\ref{eq-D-C-r}) and (\ref{eq-D-C-r-F}). It is an easy exercise to check them.
\end{proof}

\subsection{From $(s, t)$-model to $(s, q)$-model} We follow the ideas of Li--Zhao \cite{LZ16} and consider a $\widetilde{\mathrm{GL}_2^+}(\RR)$-action on $\sigma_{\omega, \beta}$. The potential walls in the $(s,q)$-plane are semilines.
\begin{definition}
 Fix a frame $(H,\gamma,u)$. Define $\sigma'_{\omega,\beta}=(Z'_{\omega,\beta}, \A'_{\omega, \beta})$ as the right action of $\begin{pmatrix}
1 & 0\\
-\frac{s}{t} & \frac{1}{t}
\end{pmatrix}$ on $\sigma_{\omega,\beta}$, that is, $\A'_{\omega, \beta}=\A_{\omega, \beta}$ and
\begin{equation} \label{eq-Z-Z'}
Z{'}_{\omega,\beta}(E):=\left(\Re Z_{\omega,\beta}(E)-\frac{s}{t}\Im Z_{\omega,\beta}(E)\right)+\frac{1}{t}i\Im Z_{\omega,\beta}(E).
\end{equation}
\end{definition}
\begin{lemma}
Fix a frame $(H,\gamma,u)$. The above right action does not change the potential walls $W(\ch,\ch')$ in the $(s,t)$-plane $\Pi_{(H,\gamma,u)}$.
\end{lemma}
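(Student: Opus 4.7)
The plan is to directly compute how the Bridgeland slope transforms under the right action and observe that the equation defining a potential wall is preserved.

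First I would unwind the definitions. Writing $Z=Z_{\omega,\beta}$ for brevity and using $\mu_\sigma(E)=-\Re Z(E)/\Im Z(E)$, the formula \eqref{eq-Z-Z'} gives
\begin{equation*}
\Re Z'(E)=\Re Z(E)-\tfrac{s}{t}\Im Z(E),\qquad \Im Z'(E)=\tfrac{1}{t}\Im Z(E).
\end{equation*}
Hence
\begin{equation*}
\mu_{\sigma'_{\omega,\beta}}(E)=-\frac{\Re Z'(E)}{\Im Z'(E)}=-\frac{t\Re Z(E)-s\Im Z(E)}{\Im Z(E)}=t\,\mu_{\sigma_{\omega,\beta}}(E)+s.
\end{equation*}
Since $t>0$, the condition $\mu_{\sigma'_{\omega,\beta}}(\ch)=\mu_{\sigma'_{\omega,\beta}}(\ch')$ is equivalent to $\mu_{\sigma_{\omega,\beta}}(\ch)=\mu_{\sigma_{\omega,\beta}}(\ch')$ at every point $(s,t)\in \Pi_{(H,\gamma,u)}$ (with $u$ and the frame fixed). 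Therefore the loci cut out by these two equations inside $\Pi_{(H,\gamma,u)}$ coincide, which is exactly the statement that the potential walls $W(\ch,\ch')$ are unchanged.

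The argument is essentially a one-line consequence of the standard fact that the right $\widetilde{\mathrm{GL}_2^+}(\RR)$-action on $\Stab(S)$ preserves the set of semistable objects and only rescales/shifts slopes by a common factor, so there is no real obstacle here; the only thing to be careful about is keeping track of signs and verifying that the action sends $\sigma_{\omega,\beta}$ to a stability condition whose slope function is an affine rescaling of the original. Once this is done, the equality of potential wall loci in the $(s,t)$-plane is immediate.
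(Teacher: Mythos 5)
Your proof is correct and is essentially the same as the paper's, namely a direct computation from \eqref{eq-Z-Z'}; the paper just packages it as the equivalence $\Re Z'(\ch')\Im Z'(\ch)-\Re Z'(\ch)\Im Z'(\ch')=0 \Leftrightarrow \Re Z(\ch')\Im Z(\ch)-\Re Z(\ch)\Im Z(\ch')=0$, which avoids dividing by $\Im Z$, while your affine rescaling $\mu_{\sigma'}=t\mu_{\sigma}+s$ (with $t>0$, and $\Im Z'=\tfrac{1}{t}\Im Z$ vanishing simultaneously with $\Im Z$) gives the same conclusion.
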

\begin{proof}
This is a direct computation because the potential wall relation for $Z'_{\omega,\beta}$ is equivalent to the potential wall relation for $Z_{\omega,\beta}$ by using (\ref{eq-Z-Z'}):
\begin{eqnarray*}
& &\Re Z' (\ch') \Im Z' (\ch)-\Re Z' (\ch)\Im Z' (\ch')=0 \nonumber \\
&\Leftrightarrow & \Re Z(\ch') \Im Z (\ch)-\Re Z (\ch)\Im Z (\ch')=0.
\end{eqnarray*}
\end{proof}

\begin{definition} Fix a frame $(H,\gamma,u)$. We change the $(s, t)$-plane $\Pi_{(H,\gamma,u)}$ to the $(s, q)$-plane $\Sigma_{(H,\gamma,u)}$ by keeping the same $s$ and defining
\begin{equation} \label{eq-q}
q:=\frac{s^2+t^2}{2}.
\end{equation}
Denote $\sigma_{s,q}:=\sigma'_{tH,sH+u\gamma}$.
The central charge (\ref{eq-Z-Z'}) becomes
\begin{eqnarray*}\label{eq-central-charge}
Z_{s,q}(E)&=&(-\ch_2(E)+\ch_0(E) H^2 q)+\left(-\frac{1}{2}\ch_0(E) \gamma^2 u^2 + u \ch_1(E).\gamma\right)\nonumber\\
& &+i(\ch_1(E).H-\ch_0(E) H^2 s).
\end{eqnarray*}
\end{definition}

\begin{corollary}[Bertram's nested wall theorem in $(s,q)$-model] Fix a frame $(H,\gamma,u)$, and use the notation as above. The potential walls $W(\ch,\ch')$ in the $(s,q)$-plane $\Sigma_{(H,\gamma,u)}$ are given by semilines
\begin{equation*}\label{eq-wall-s-q}
q=Cs+\frac{1}{2}D \quad (q>\frac{s^2}{2}).
\end{equation*}
\begin{itemize}
\item If $x\neq 0$, then the potential walls are given by semilines passing through a fixed point $(\frac{y_1}{x}, \frac{1}{2}\left(\frac{y_1^2}{x^2}-F\right))$ with slope $C=C(\ch,\ch')$:
\begin{equation}\label{eq-wall-passing-fixed-point}
q=C(s-\frac{y_1}{x})+\frac{1}{2}\left(\frac{y_1^2}{x^2}-F\right), \quad (q>\frac{s^2}{2}),
\end{equation}
where $F=F(\ch)$ as in (\ref{eq-F}) is independent of $\ch'$.
\item If $x=0$ and $y_1>0$, then $r\neq 0$. The potential walls are given by parallel semilines with constant slope $C=\frac{z+duy_2}{gy_1}$:
\begin{equation}\label{eq-wall-parallel-lines}
q= C (s-\frac{c_1}{r})+\frac{1}{2}\left(\frac{c_1^2}{r^2}-F'\right), \quad (q>\frac{s^2}{2}),
\end{equation}
where $F'=F'(\ch')$ as in (\ref{eq-F'}) is independent of $\ch$.
\end{itemize} 
\end{corollary}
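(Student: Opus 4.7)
The plan is to transport the semicircle equation of Theorem~\ref{thm-Maciocia} through the coordinate change $q=(s^2+t^2)/2$. Starting from the wall equation~(\ref{eq-wall-equation}) in the form $(s-C)^2+t^2=D+C^2$, I would expand the square to get $s^2+t^2=2Cs+D$ and then divide by two, giving immediately $q=Cs+\tfrac{1}{2}D$. Since $t>0$ is equivalent to $t^2>0$, i.e.\ $s^2+t^2>s^2$, the open half-plane constraint becomes $q>s^2/2$, which is precisely the image of $\Pi_{(H,\gamma,u)}$ inside $\Sigma_{(H,\gamma,u)}$.

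For the case $x\neq 0$, the next step is to substitute the dependence of $D$ on $C$ given by equation~(\ref{eq-D-C-F}), namely $D=-\tfrac{2y_1}{x}C+(\tfrac{y_1^2}{x^2}-F)$. Plugging into $q=Cs+\tfrac{1}{2}D$ yields
\begin{equation*}
q=C\bigl(s-\tfrac{y_1}{x}\bigr)+\tfrac{1}{2}\bigl(\tfrac{y_1^2}{x^2}-F\bigr),
\end{equation*}
which is (\ref{eq-wall-passing-fixed-point}). Since $F=F(\ch)$ is independent of $\ch'$ by (\ref{eq-F}), the point $\bigl(\tfrac{y_1}{x},\,\tfrac{1}{2}(\tfrac{y_1^2}{x^2}-F)\bigr)$ is common to all potential walls and $C=C(\ch,\ch')$ is the varying slope, exactly the pencil-through-a-fixed-point description.

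For the case $x=0$ and $y_1>0$, the analogous input is equation~(\ref{eq-D-C-r-F}), $D=-\tfrac{2c_1}{r}C+(\tfrac{c_1^2}{r^2}-F')$, combined with the observation from Theorem~\ref{thm-Maciocia} that $C=\tfrac{z+duy_2}{gy_1}$ no longer depends on $\ch'$. Substituting into $q=Cs+\tfrac{1}{2}D$ gives (\ref{eq-wall-parallel-lines}); since the slope $C$ is now constant in $\ch'$ while the intercept moves with $F'=F'(\ch')$ (cf.~(\ref{eq-F'})), the potential walls form a family of parallel semi-lines.

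There is essentially no obstacle: the content of the corollary is purely a coordinate change, and the only thing to verify beyond algebra is that the map $(s,t)\mapsto(s,q)$ is a bijection from $\{t>0\}$ onto $\{q>s^2/2\}$, which is obvious since $t=\sqrt{2q-s^2}$. I would therefore present the argument as three short bullets (general form, $x\neq 0$ case, $x=0$ case) and refer back to (\ref{eq-D-C-F}) and (\ref{eq-D-C-r-F}) for the dependence of $D$ on $C$.
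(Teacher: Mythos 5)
Your proposal is correct and follows essentially the same route as the paper, which simply records the result as a direct computation from the wall equation~(\ref{eq-wall-equation}) and the substitution~(\ref{eq-q}), with the two cases handled via the relations~(\ref{eq-D-C-F}) and~(\ref{eq-D-C-r-F}). Your write-up just makes the algebra and the bijectivity of $(s,t)\mapsto(s,q)$ explicit.
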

\begin{proof}
This is a direct computation by using (\ref{eq-wall-equation}) and (\ref{eq-q}). 
\end{proof}
\begin{remark}In the case of $\mathbb{P}^2$, the condition $q>\frac{s^2}{2}$ is relaxed, $q$ could be a little negative, and the boundary is given by a fractal curve (see \cite{LZ16}).
\end{remark}

\subsection{Duality induced by derived dual}
\begin{lemma}\cite[Theorem 3.1]{Mar17} \label{lem-duality}
The functor $\Phi(\cdot):=R\sHom(\cdot,\OO_S)[1]$ induces an isomorphism between the Bridgeland moduli spaces $M_{\omega,\beta}(\ch)$ and $M_{\omega,-\beta}(-\ch^*)$ provided these moduli spaces exist and $Z_{\omega,\beta}(\ch)$ belongs to the open upper half-plane.
\end{lemma}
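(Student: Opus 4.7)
The plan is to verify that $\Phi$ is a contravariant involutive auto-equivalence of $\DS$ which intertwines the two stability conditions $\sigma_{\omega,\beta}$ and $\sigma_{\omega,-\beta}$, and then descend to an isomorphism of moduli spaces. The derived biduality $R\sHom(R\sHom(-,\OO_S),\OO_S)\simeq\mathrm{id}$ on perfect complexes, combined with the shift, gives $\Phi\circ\Phi\simeq\mathrm{id}$. It then suffices to check that (a) $\Phi$ transforms the numerical data in the asserted way, (b) it exchanges the hearts $\A_{\omega,\beta}\leftrightarrow\A_{\omega,-\beta}$, and (c) it sends $\sigma_{\omega,\beta}$-(semi)stable objects to $\sigma_{\omega,-\beta}$-(semi)stable objects.

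For (a), the Chern character identity $\ch(E^{\vee})=\ch(E)^{*}$ combined with the shift by one gives $\ch(\Phi(E))=-\ch(E)^{*}$. For the central charge I compute
\begin{equation*}
Z_{\omega,-\beta}(\Phi(E))=-\int_S e^{-(-\beta+i\omega)}\bigl(-\ch(E)^{*}\bigr)=\int_S e^{\beta-i\omega}\ch(E)^{*},
\end{equation*}
and then use that the involution $^{*}$ acts trivially on the top-degree component (so $\int_S \alpha=\int_S \alpha^{*}$ on a surface) to rewrite this as $\int_S e^{-\beta+i\omega}\ch(E)=-\overline{Z_{\omega,\beta}(E)}$. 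The open upper half plane hypothesis means $Z_{\omega,\beta}(\ch)=a+bi$ with $b>0$, so $-\overline{Z_{\omega,\beta}(\ch)}=-a+bi$ also has strictly positive imaginary part. This yields the crucial slope identity $\mu_{\sigma_{\omega,-\beta}}(\Phi(E))=-\mu_{\sigma_{\omega,\beta}}(E)$, with both central charges genuinely in the upper half plane.

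The technical heart of the argument is (b). Given $E\in\A_{\omega,\beta}$, I would fit it into the triangle $H^{-1}(E)[1]\to E\to H^0(E)$ with $H^{-1}(E)\in\F_{\omega,\beta}$ and $H^0(E)\in\T_{\omega,\beta}$, apply $R\sHom(-,\OO_S)$, and analyse the spectral sequence $E_2^{p,q}=\sExt^{p}(H^{-q}(E),\OO_S)\Rightarrow R^{p+q}\sHom(E,\OO_S)$. The essential inputs are: dualising a torsion-free sheaf negates its Mumford slope; on a smooth surface $\sExt^1$ of a torsion-free sheaf is torsion and $\sExt^{2}$ vanishes; and for a torsion sheaf $T$, the object $R\sHom(T,\OO_S)$ has only torsion cohomology, shifted into the appropriate degree. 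Decomposing $H^0(E)$ into its torsion and torsion-free parts and tracking each contribution through the spectral sequence, one verifies piece by piece that $H^{-1}(\Phi(E))\in\F_{\omega,-\beta}$ and $H^{0}(\Phi(E))\in\T_{\omega,-\beta}$. The main obstacle is precisely this bookkeeping, and the role of the open upper half plane hypothesis in it: the hypothesis prevents the phase of $\Phi(E)$ from degenerating to $0$, which would otherwise force $\Phi(E)$ out of $\A_{\omega,-\beta}$ into a nontrivial cohomological shift and spoil the spectral-sequence argument.

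Once (a) and (b) are in hand, (c) is formal: a sub-object $F\hookrightarrow E$ in $\A_{\omega,\beta}$ corresponds under $\Phi$ to a quotient $\Phi(E)\twoheadrightarrow\Phi(F)$ in $\A_{\omega,-\beta}$, and the slope-negation identity translates the $\sigma_{\omega,\beta}$-(semi)stability inequalities for $E$ into the required $\sigma_{\omega,-\beta}$-(semi)stability inequalities for $\Phi(E)$. Since $\Phi$ is involutive, this yields a bijection on closed points between $M_{\omega,\beta}(\ch)$ and $M_{\omega,-\beta}(-\ch^{*})$; applying $\Phi$ fibrewise to (quasi-)universal families, together with flat-base-change properties of $R\sHom$, upgrades the bijection to a scheme-theoretic isomorphism.
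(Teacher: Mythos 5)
Your step (a) is fine: $\ch(\Phi(E))=-\ch(E)^{*}$ and $Z_{\omega,-\beta}(\Phi(E))=-\overline{Z_{\omega,\beta}(E)}$ are exactly the numerical identities used in Corollary~\ref{cor-derived-dual-stability}. Note, however, that the paper does not reprove the lemma at all: its proof is a one-line reduction to Martinez's Theorem~3.1, observing that the functor there, $R\sHom(\cdot,\omega_S)[1]$, differs from $\Phi$ by a twist by $\omega_S^{-1}$ (i.e.\ a translation of $\beta$). Your attempt at a self-contained argument is therefore a different route, and as written it has a genuine gap.

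The gap is step (b). The functor $\Phi$ does \emph{not} exchange the hearts $\A_{\omega,\beta}$ and $\A_{\omega,-\beta}$. For a skyscraper sheaf one has $R\sHom(\OO_x,\OO_S)\cong\OO_x[-2]$, so $\Phi(\OO_x)\cong\OO_x[-1]$ has cohomology in degree $1$ and lies in no tilted heart, even though $\OO_x\in\T_{\omega,\beta}\subset\A_{\omega,\beta}$; similarly, for $F\in\F_{\omega,\beta}$ torsion-free but not locally free, $\Phi(F[1])=R\sHom(F,\OO_S)$ has the nonzero $0$-dimensional sheaf $\sExt^1(F,\OO_S)$ sitting in degree $1$, again outside $\A_{\omega,-\beta}$. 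The hypothesis that $Z_{\omega,\beta}(\ch)$ lies in the open upper half plane does not remove such objects from the heart; its actual role is that a $\sigma_{\omega,\beta}$-semistable $E$ of class $\ch$ has phase strictly less than $1$, which forces e.g.\ $\Hom(\OO_x,E)=0$ and controls the torsion and $\sExt$-sheaves of $H^{-1}(E)$, $H^0(E)$. In other words, the dualization statement only holds for \emph{semistable} objects of non-maximal phase, and semistability must enter the spectral-sequence bookkeeping itself; it cannot be postponed to step (c). Because (c) treats $\Phi$ as an exact anti-equivalence between the two hearts (turning subobjects into quotients), it collapses once (b) fails, and what remains to be proved is precisely the content of the cited theorem of Martinez: that $\Phi$ carries $\sigma_{\omega,\beta}$-(semi)stable objects of class $\ch$ with $\Im Z_{\omega,\beta}(\ch)>0$ to $\sigma_{\omega,-\beta}$-(semi)stable objects of class $-\ch^{*}$. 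The final upgrade from a pointwise bijection to an isomorphism of moduli spaces via (quasi-)universal families is standard once that is established.
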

\begin{proof}
This is a variation of Martinez's duality theorem \cite[Theorem 3.1]{Mar17}, where the duality functor is taken as $R\sHom(\cdot,\omega_S)[1]$.
\end{proof}

\begin{corollary}\label{cor-derived-dual-stability}
Fix the Chern character $\ch=(\ch_0,\ch_1,\ch_2)$. Assume that $Z_{\omega,\beta}(\ch)$ belongs to the open upper half-plane.  The wall-chamber structures of $\sigma_{\omega,\beta}$ with respect to $\ch$ are dual to the wall-chamber structures of $\Phi(\sigma_{\omega,\beta})$ with respect to $\Phi(\ch)=-\ch^*=(-\ch_0,\ch_1,-\ch_2)$ in the sense that
\begin{equation*}
\Phi(\sigma_{\omega,\beta})=\sigma_{\omega,-\beta}.
\end{equation*}
Applying $\Phi$ again, we have $\Phi\circ\Phi(\sigma_{\omega,\beta})=\sigma_{\omega,\beta}$. Moreover, if we fix a frame $(H,\gamma,u)$, then $\sigma_{\omega,\beta} \in \Pi_{(H,\gamma,u)}$ with coordinates $(s,t)$ is dual to $\Phi(\sigma_{\omega,\beta}) \in \Pi_{(H,-\gamma,u)}$ with coordinates $(-s,t)$.
\begin{itemize}
\item If $\sigma_{\omega,\beta}\in \mathtt{C}$, where $\mathtt{C}$ is a chamber with respect to $\ch$ in $\Pi_{(H,\gamma,u)}$, then we have $\Phi(\sigma_{\omega,\beta})\in \mathtt{DC}$, where $\mathtt{DC}$ is the corresponding chamber with respect to $\Phi(\ch)$ in $\Pi_{(H,-\gamma,u)}$. 
\item If $\sigma:=\sigma_{\omega,\beta}\in W(\ch,\ch')$ in $\Pi_{(H,\gamma,u)}$, then $\Phi(\sigma)\in W(-\ch^*,-{\ch'}^*)$ in $\Pi_{(H,-\gamma,u)}$, and there are relations\\ 
$\mu_{\Phi(\sigma)}(-\ch^*)=-\mu_{\sigma}(\ch)$, $C_{\Phi(\sigma)}(-\ch^*,-{\ch'}^*)=-C_{\sigma}(\ch,\ch')$,\\
$D_{\Phi(\sigma)}(-\ch^*,-{\ch'}^*)=D_{\sigma}(\ch,\ch')$, \\
and $R_{\Phi(\sigma)}(-\ch^*,-{\ch'}^*)=R_{\sigma}(\ch,\ch')$.
\end{itemize}
\end{corollary}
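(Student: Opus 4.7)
The plan is to reduce everything to two inputs: the functorial duality statement of Lemma~\ref{lem-duality}, and the explicit formulas of Theorem~\ref{thm-Maciocia}. First I would write out the central charge under the dualising substitution. A direct expansion of $Z_{\omega,\beta}(\ch)=-\int_S e^{-(\beta+\sqrt{-1}\omega)}\cdot\ch$ with $\ch\mapsto-\ch^*=(-\ch_0,\ch_1,-\ch_2)$ and $\beta\mapsto-\beta$ yields the identity $Z_{\omega,-\beta}(-\ch^*)=-\overline{Z_{\omega,\beta}(\ch)}$, which sends the open upper half plane to itself and reverses the sign of the ratio $-\Re/\Im$. This immediately gives the slope identity $\mu_{\Phi(\sigma)}(-\ch^*)=-\mu_{\sigma}(\ch)$. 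Combined with Lemma~\ref{lem-duality}, which matches the (semi)stable objects on both sides, we obtain the identification $\Phi(\sigma_{\omega,\beta})=\sigma_{\omega,-\beta}$, and the involution $\Phi\circ\Phi=\mathrm{id}$ is then immediate from $(-(-\ch^*))^*=\ch$ together with $-(-\beta)=\beta$.

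Next I would treat the frame statement. Writing $\beta=sH+u\gamma$ one has $-\beta=-sH+u(-\gamma)$, so $\sigma_{\omega,-\beta}$ naturally lies on the half plane $\Pi_{(H,-\gamma,u)}$ with $(s,t)$-coordinates $(-s,t)$, keeping the convention $u\geq 0$ intact. Since $\Phi$ is an equivalence of triangulated categories sending Bridgeland-(semi)stable objects to Bridgeland-(semi)stable objects, it must bijectively match chambers with chambers and walls with walls; thus the chamber-by-chamber correspondence $\mathtt{C}\leftrightarrow\mathtt{DC}$ is automatic once the frame identification is established.

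Finally, for the explicit transformation of $C$, $D$ and $R$, I would decompose the dualised Chern characters in the new frame. With respect to $(H,-\gamma,u)$, the coefficients of $-\ch^*$ become $(-x,y_1,-y_2,\delta,-z)$ and those of $-(\ch')^*$ become $(-r,c_1,-c_2,\delta',-\chi)$, while the invariants $g=H^2$ and $d=-(-\gamma)^2=-\gamma^2$ are unchanged. A direct substitution into (\ref{eq-C_sigma}) shows that the numerator $x\chi-rz+ud(xc_2-ry_2)$ is unaltered whereas the denominator $g(xc_1-ry_1)$ flips sign, giving $C_{\Phi(\sigma)}=-C_{\sigma}$. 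Substituting into (\ref{eq-D_sigma}) shows that both the numerator and the denominator flip sign, so $D_{\Phi(\sigma)}=D_{\sigma}$ and hence $R_{\Phi(\sigma)}=\sqrt{D+(-C)^2}=R_{\sigma}$.

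The only non-mechanical point is the identification $\Phi(\sigma_{\omega,\beta})=\sigma_{\omega,-\beta}$ as genuine stability conditions, which requires knowing that $\Phi=R\sHom(\cdot,\OO_S)[1]$ sends the tilted heart $\A_{\omega,\beta}$ to $\A_{\omega,-\beta}$ at the level of objects with $Z_{\omega,\beta}(\ch)$ in the upper half plane; this is precisely the content of Martinez's duality result packaged in Lemma~\ref{lem-duality}, so everything else reduces to bookkeeping on central charges, Chern characters, and frame coordinates.
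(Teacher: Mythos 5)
Your proposal is correct and follows essentially the same route as the paper, whose proof is simply the ``direct computation'' you carry out: the identity $Z_{\omega,-\beta}(-\ch^*)=-\overline{Z_{\omega,\beta}(\ch)}$ giving the slope relation, the heart/moduli identification delegated to Lemma~\ref{lem-duality}, and substitution of the dualized coefficients $(-x,y_1,-y_2,-z)$, $(-r,c_1,-c_2,-\chi)$ in the frame $(H,-\gamma,u)$ into equations~(\ref{eq-C_sigma}) and (\ref{eq-D_sigma}) to get $C\mapsto -C$, $D\mapsto D$, $R\mapsto R$. All the sign checks are right, so nothing further is needed.
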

\begin{proof}
The proof is a direct computation.
\end{proof}
\begin{remark}\label{rmk-upper-half-plane}
The assumption that $Z_{\omega,\beta}(\ch)$ belongs to the open upper half-plane means exactly that we exclude the case $\Im Z_{\omega,\beta}(\ch)=0$, which is equivalent to one of the following three subcases:
\begin{itemize}
\item $\ch=(0,0,n)$ for some positive integer $n$;
\item $\ch_0>0$ and $\Im Z_{\omega,\beta}(\ch)=0$; or
\item $\ch_0<0$ and $\Im Z_{\omega,\beta}(\ch)=0$.
\end{itemize}
We call the first subcase the trivial chamber, the second subcase the Uhlenbeck wall, and the third subcase the dual Uhlenbeck wall (see Definition~\ref{def-Gieseker-chamber}). 
\end{remark}

\section{Bayer--Macr\`i's nef line bundle theory}\label{section-BM-theory}
\subsection{The local Bayer-Macr{\`i} map}
Let $S$ be a smooth projective surface over $\mathbb{C}$. Let $\sigma=(Z,\A)\in \Stab(S)$ be a stability condition, and let $\ch=(\ch_0,\ch_1,\ch_2)$ be a choice of Chern character. Assume that we are given a \emph{flat family} (see \cite[Definition 3.1]{BM14a}) $\mathcal{E}\in \Db(M\times S)$ of $\sigma$-semistable objects of class $\ch$ parameterized by a proper algebraic space $M$ of finite type over $\mathbb{C}$. Denote by $N^1(M)=\NS(M)_\RR$ the group of real Cartier divisors modulo numerical equivalence. Write $N_1(M)$ as the group of real $1$-cycles modulo numerical equivalence with respect to the intersection pairing with Cartier divisors. The Bayer--Macr\`i's numerical Cartier divisor class $\ell_{\sigma, \mathcal{E}}\in N^1(M) = \Hom(N_1(M), \mathbb{R})$ is defined as follows: for any projective integral curve $C\subset M$, 
\begin{equation} \label{Bayer-Macri-def}
\ell_{\sigma,\mathcal{E}}([C]) =  \ell_{\sigma,\mathcal{E}}.C := \Im \left(-\frac{Z\bigl(\Phi_{\mathcal{E}}(\OO_C)\bigr)}{Z(\ch)}\right)
= \Im \left(-\frac{Z\bigl((p_S)_* \mathcal{E}|_{C \times S}\bigr)}{Z(\ch)} \right),
\end{equation}
where $\Phi_\mathcal{E}\colon \Db(M)\to\Db(S)$ is the Fourier--Mukai functor with kernel $\mathcal{E}$ and $\OO_C$ is the structure sheaf of $C$.

\begin{theorem}\label{thm-Bayer-Macri} \cite[Theorem 1.1]{BM14a}
The divisor class $\ell_{\sigma, \mathcal{E}}$ is nef on $M$.
In addition, we have $\ell_{\sigma, \mathcal{E}}.C = 0$ if and only if for two general points $c, c' \in C$, the corresponding objects $\mathcal{E}_c, \mathcal{E}_{c'}$ are $S$-equivalent.
\end{theorem}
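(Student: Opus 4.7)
The plan is to verify the nefness inequality $\ell_{\sigma,\mathcal{E}}.C \geq 0$ for every projective integral curve $C \subset M$, and then to analyze when equality holds. Setting $\tilde{\mathcal{E}} := R(p_S)_*(\mathcal{E}|_{C \times S})$, the definition (\ref{Bayer-Macri-def}) turns the inequality into the half-plane condition $\Im\bigl(-Z(\tilde{\mathcal{E}})/Z(\ch)\bigr) \geq 0$, i.e.\ into a one-sided condition on $Z(\tilde{\mathcal{E}})$ relative to the ray $\RR_{\geq 0}\cdot Z(\ch)$. Passing to the normalization of $C$ (which alters $\tilde{\mathcal{E}}$ only by a contribution supported in dimension zero, harmless up to a positive degree factor) allows us to assume $C$ is smooth.

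The central technical input is the following claim: every Harder--Narasimhan factor $A_i$ of $\tilde{\mathcal{E}}$ in the slicing $\mathcal{P}$ of $\sigma$, with phases $\phi_1>\dots>\phi_n$, satisfies $\phi_i\in[\phi(\ch)-1,\,\phi(\ch)]$. Granting this, additivity of $Z$ together with $Z(A_i) = |Z(A_i)|\,\e^{\sqrt{-1}\pi\phi_i}$ gives
\[
\Im\left(-\frac{Z(\tilde{\mathcal{E}})}{Z(\ch)}\right) \;=\; \sum_i \frac{|Z(A_i)|}{|Z(\ch)|}\,\sin\!\bigl(\pi(\phi_i-\phi(\ch)+1)\bigr)\;\geq\;0,
\]
since each sine factor is non-negative on the closed interval $\phi_i-\phi(\ch)+1\in[0,1]$. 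This immediately yields the nefness half of the theorem.

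The main obstacle is proving the key claim. For the upper bound $\phi_i\leq\phi(\ch)$, I would proceed by contradiction: if $\phi_1>\phi(\ch)$, then $\tilde{\mathcal{E}}$ admits a nonzero subobject $A'$ of phase $>\phi(\ch)$ in the slicing. Adjunction $\Hom_{\DS}(A',\tilde{\mathcal{E}}) \cong \Hom_{\Db(C\times S)}(p_S^*A',\,\mathcal{E}|_{C\times S})$ combined with base change along the inclusion of a generic point $c\hookrightarrow C$ extracts a nonzero morphism from a pure-phase object into the $\sigma$-semistable fiber $\mathcal{E}_c$, whose source has phase strictly greater than $\phi(\ch) = \phi(\mathcal{E}_c)$, contradicting semistability. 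The lower bound $\phi_n \geq \phi(\ch)-1$ follows by the dual argument applied to the lowest HN-factor, viewed as a quotient and tested against generic fibers via the same adjunction after an appropriate shift. The support property of $\sigma$ is the tool that makes the slicing interact well with base change and rules out pathologies in limits.

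For the equality clause, the vanishing $\ell_{\sigma,\mathcal{E}}.C = 0$ forces every summand in the displayed formula to vanish, hence every HN-phase equals $\phi(\ch)$, so $\tilde{\mathcal{E}}$ is itself $\sigma$-semistable of phase $\phi(\ch)$. Its Jordan--H\"older decomposition is then an invariant of the family, and base change identifies it (up to permutation) with the JH-decomposition of each generic fiber $\mathcal{E}_c$; applying this at two general points $c, c'$ yields $\mathcal{E}_c \sim_S \mathcal{E}_{c'}$. The converse direction is a direct computation: if the generic fibers share a common JH-filtration, then $\tilde{\mathcal{E}}$ decomposes into pieces all of phase $\phi(\ch)$ and every summand of the displayed formula vanishes, giving $\ell_{\sigma,\mathcal{E}}.C = 0$.
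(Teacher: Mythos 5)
Note first that the paper does not prove this statement: it is quoted from \cite{BM14a}, so the only fair comparison is with Bayer--Macr\`i's original argument, whose overall skeleton your proposal does mirror (normalize, confine the HN phases of $\tilde{\mathcal{E}}=\Phi_{\mathcal{E}}(\OO_C)$ to a unit window around $\phi(\ch)$, and read off nonnegativity of the imaginary part). The trouble is that the one step you label as ``the central technical input'' is exactly where your sketch breaks. From a nonzero morphism $p_S^*A'\to\mathcal{E}|_{C\times S}$ you cannot, by ``base change along a generic point,'' extract a nonzero morphism $A'\to\mathcal{E}_c$: a nonzero map in the derived category of $C\times S$ may restrict to zero on every closed fibre (extension-type maps along the $C$-direction do precisely this), so the contradiction with semistability of $\mathcal{E}_c$ is not obtained. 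The actual argument in \cite{BM14a} goes through the adjoint Fourier--Mukai transform in the other direction: one pushes the test object $A'$ (resp.\ an object of phase $\leq\phi(\ch)-1$) down to $C$, identifies its derived fibres via Serre duality with $\mathrm{RHom}_S(A',\mathcal{E}_c)^{\vee}$ (resp.\ $\mathrm{RHom}_S(\mathcal{E}_c,\cdot)$), uses fibrewise semistability to place these fibres in strictly negative (resp.\ strictly positive) cohomological degrees, and only then deduces the required Hom-vanishing against the sheaf $\OO_C$. That cohomological bookkeeping is the substance of the proof; the support property, which you invoke to ``rule out pathologies,'' is not the relevant tool here.

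The equality clause has two further gaps. First, an internal inconsistency: with your closed window $\phi_i\in[\phi(\ch)-1,\phi(\ch)]$, vanishing of every term $\sin\bigl(\pi(\phi_i-\phi(\ch)+1)\bigr)$ only forces $\phi_i\in\{\phi(\ch)-1,\phi(\ch)\}$, so ``every HN-phase equals $\phi(\ch)$'' does not follow from your own bound; one needs the half-open containment (phases in $(\phi(\ch)-1,\phi(\ch)]$), which is what \cite{BM14a} actually prove. Second, even granting that $\tilde{\mathcal{E}}$ is semistable of phase $\phi(\ch)$, the assertion that base change identifies its Jordan--H\"older factors with those of a generic fibre is unsupported: the class of $\tilde{\mathcal{E}}$ is not a multiple of $\mathbf{v}$ and there is no comparison map in sight. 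A genuine argument is needed here; for instance, one can exploit twists by divisors $D=c_1+\dots+c_m$ on $C$, using $Z(\Phi_{\mathcal{E}}(\OO_C(-D)))=Z(\Phi_{\mathcal{E}}(\OO_C))-mZ(\ch)$ together with the fact that $\Phi_{\mathcal{E}}(\OO_C(-D))$ again lies in the allowed phase window, to force $\Phi_{\mathcal{E}}(\OO_C(-D))=0$ for suitable $m$ and hence $\Phi_{\mathcal{E}}(\OO_C)\cong\Phi_{\mathcal{E}}(\OO_D)$, after which uniqueness of JH factors in $\mathcal{P}(\phi(\ch))$ yields the S-equivalence of the fibres; this is the kind of work the ``only if'' direction requires. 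Likewise the converse is not a ``direct computation'': one must produce a filtration of $\mathcal{E}|_{C\times S}$ (at least over a dense open subset of $C$, with controlled corrections at the remaining points) whose factors are families of the common stable factors, and only then does $\ell_{\sigma,\mathcal{E}}.C=0$ follow. In short, the architecture of your proposal is the right one, but both load-bearing steps---the phase confinement of $\Phi_{\mathcal{E}}(\OO_C)$ and the analysis of the degenerate case---are asserted rather than proved, and the first is asserted via an inference that is false in general.
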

Here two semistable objects are \emph{$S$-equivalent} if their Jordan--H\"older filtrations into stable factors of the same phase have identical stable factors.

\begin{definition} Let $\mathtt{C}$ be a Bridgeland chamber with respect to $\ch$. Assume the existence of the moduli space $M_{\sigma}(\ch)$ for $\sigma\in \mathtt{C}$ with a universal family $\mathcal{E}$. Then $M_{\mathtt{C}}(\ch):=M_\sigma(\ch)$ is constant for $\sigma\in \mathtt{C}$. Theorem~\ref{thm-Bayer-Macri} yields a map
\begin{eqnarray*}
\ell : \overline{\mathtt{C}} & \longrightarrow & \nef(M_{\mathtt{C}}(\ch))\\
\sigma & \mapsto & \ell_{\sigma,\mathcal{E}}
\end{eqnarray*}
which is called the \emph{local Bayer--Macr\`i map} for the chamber $\mathtt{C}$ with respect to $\ch$.
\end{definition} 

For any $\sigma\in \Stab^{\dag}(S)$, after a $\widetilde{\mathrm{GL}_2^+}(\RR)$-action, we assume that $\sigma=\sigma_{\omega,\beta}$, that is, skyscraper sheaves are stable of phase $1$. Denote 
$$\mathbf{v}:=v(\ch)=\ch\cdot e^{\frac{1}{2}\ln\td(S)}.$$
The local Bayer--Macr\`i map is the composition of the following three maps:
\begin{equation*}
\Stab^{\dag}(S) \xrightarrow{\pi} H^*_{\mathrm{alg}}(S,\QQ)\otimes\CC \xrightarrow{\mathcal{I}} \mathbf{v}^{\perp}\xrightarrow{\theta_{\mathtt{C},\mathcal{E}}} N^1 (M_\mathtt{C}(\ch)).
\end{equation*}

\begin{itemize}
\item The map $\pi$ forgets the heart: $\pi(\sigma_{\omega,\beta}):=\mho_{Z_{\omega,\beta}}$ as in (\ref{eq-forgetful-map}).
\item For any $\mho\in H^*_{\mathrm{alg}}(S,\QQ)\otimes\CC$, define $\mathcal{I}(\mho):=\Im \frac{\mho}{-\langle \mho, \mathbf{v} \rangle_S}$. One can check that $\mathcal{I}(\mho)\in \mathbf{v}^{\perp}$ (this also follows from Lemma~\ref{lem-w-sigma-1}), where the perpendicular relation is with respect to the Mukai pairing: 
\begin{equation}\label{eq-v-perpendicular}
\mathbf{v}^{\perp}:=\{w\in H^*_{\mathrm{alg}}(S,\QQ)\otimes\RR \ | \ \langle w, \mathbf{v}\rangle_S =0\}.
\end{equation}
\item The third map $\theta_{\mathtt{C},\mathcal{E}}$ is the \emph{algebraic Mukai morphism}. More precisely, for a fixed Mukai vector $w\in \mathbf{v}^{\perp}$ and an integral curve $C\subset M_\mathtt{C}(\ch)$, 
\begin{equation*}\label{def-Mukai-morphism}
\theta_{\mathtt{C},\mathcal{E}}(w).[C]:=\langle w, v(\Phi_{\mathcal{E}}(\OO_C))\rangle_S.
\end{equation*}
\end{itemize}
\begin{definition}
Define $w_{\sigma_{\omega,\beta}}(\ch):=-\Im \left(\overline{\langle \mho_Z, \mathbf{v} \rangle_S}\cdot \mho_Z\right)$. We simply write it as $w_{\omega,\beta}$ or $w_\sigma$.  
\end{definition}
\begin{lemma} \label{lem-w-sigma-1}
Fix the Chern character $\ch=(\ch_0,\ch_1,\ch_2)$. The line bundle class $\ell_{\sigma_{\omega,\beta}}\in N^1(M_{\sigma_{\omega,\beta}}(\ch))$ (if it exists) is given by
\begin{equation}\label{eq-linebundle-mukaivector}
\ell_{\sigma_{\omega,\beta}}\xlongequal{\RR_+}\theta_{\sigma,\mathcal{E}}(w_{\omega,\beta}),
\end{equation}
where $w_{\omega,\beta}\in \mathbf{v}^{\perp}$ is given by
\begin{equation}\label{eq-w}
w_{\omega,\beta}=\left(\Im Z(\ch)\right)\Re \mho_Z-\left(\Re Z(\ch)\right)\Im \mho_Z.
\end{equation}
\end{lemma}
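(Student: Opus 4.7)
The plan is to unwind the composition $\ell = \theta_{\mathtt{C},\mathcal{E}} \circ \mathcal{I} \circ \pi$ displayed just before the lemma, and check that at the level of $H^*_{\mathrm{alg}}(S,\QQ)\otimes\RR$ the class $\mathcal{I}(\pi(\sigma_{\omega,\beta}))$ coincides with $w_{\omega,\beta}$ up to a positive real scalar. Since $\theta_{\mathtt{C},\mathcal{E}}$ is $\RR$-linear, this suffices to prove $\ell_{\sigma_{\omega,\beta}}\xlongequal{\RR_+}\theta_{\sigma,\mathcal{E}}(w_{\omega,\beta})$.

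The first step is to apply $\pi$: by equation (\ref{eq-forgetful-map}), $\pi(\sigma_{\omega,\beta})=\mho_Z$, and by Lemma~\ref{lem-central-charge} we have the crucial identity $\langle \mho_Z, \mathbf{v}\rangle_S = Z_{\omega,\beta}(\ch)$. Thus
\[
\mathcal{I}(\mho_Z) \;=\; \Im\,\frac{\mho_Z}{-\langle \mho_Z, \mathbf{v} \rangle_S} \;=\; \Im\,\frac{-\mho_Z\cdot\overline{Z(\ch)}}{|Z(\ch)|^2}.
\]
Expanding $\mho_Z = \Re\mho_Z + i\,\Im\mho_Z$ and $\overline{Z(\ch)} = \Re Z(\ch) - i\,\Im Z(\ch)$, the imaginary part of $\mho_Z\cdot\overline{Z(\ch)}$ comes out to $(\Im \mho_Z)\Re Z(\ch) - (\Re \mho_Z)\Im Z(\ch)$. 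Therefore
\[
\mathcal{I}(\mho_Z) \;=\; \frac{1}{|Z(\ch)|^2}\bigl((\Im Z(\ch))\Re\mho_Z - (\Re Z(\ch))\Im\mho_Z\bigr) \;=\; \frac{1}{|Z(\ch)|^2}\,w_{\omega,\beta},
\]
which is exactly the right-hand formula (\ref{eq-w}). The same manipulation also verifies the equivalent formula $w_{\omega,\beta} = -\Im\bigl(\overline{\langle \mho_Z, \mathbf{v}\rangle_S}\cdot \mho_Z\bigr)$ from the definition.

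For completeness I would confirm the assertion, used implicitly in the statement, that $w_{\omega,\beta}\in\mathbf{v}^\perp$: since the Mukai pairing is $\RR$-bilinear, $\langle \Re\mho_Z,\mathbf{v}\rangle_S = \Re\langle \mho_Z,\mathbf{v}\rangle_S = \Re Z(\ch)$ and similarly for $\Im\mho_Z$, so $\langle w_{\omega,\beta},\mathbf{v}\rangle_S = (\Im Z)(\Re Z) - (\Re Z)(\Im Z) = 0$. Finally, applying $\theta_{\mathtt{C},\mathcal{E}}$ to both sides and invoking the hypothesis that $M_{\sigma_{\omega,\beta}}(\ch)$ exists with a (quasi-)universal family $\mathcal{E}$ yields the equality $\ell_{\sigma_{\omega,\beta}} = \frac{1}{|Z(\ch)|^2}\theta_{\sigma,\mathcal{E}}(w_{\omega,\beta})$, i.e. proportionality by the positive real factor $|Z(\ch)|^{-2}$.

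There is no real obstacle: the argument is essentially a direct unpacking of the three maps in the Bayer–Macr\`i composition. The only subtle point is bookkeeping the positive scalar $|Z(\ch)|^{-2}$, which justifies the $\xlongequal{\RR_+}$ notation rather than genuine equality; this requires (and uses) the assumption $Z(\ch)\neq 0$, which holds since $\ch$ is the Mukai vector of an object in the heart lying above the real axis in the Bridgeland chamber under consideration.
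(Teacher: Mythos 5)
Your proposal is correct and follows essentially the same route as the paper's proof: both unwind the definition $w_{\omega,\beta}=-\Im\bigl(\overline{\langle \mho_Z,\mathbf{v}\rangle_S}\cdot\mho_Z\bigr)$ using $\langle \mho_Z,\mathbf{v}\rangle_S=Z_{\omega,\beta}(\ch)$ from Lemma~\ref{lem-central-charge} to get $w_{\omega,\beta}=|Z(\ch)|^2\,\mathcal{I}(\mho_Z)$, check perpendicularity by bilinearity exactly as in the paper, and conclude by applying the Mukai morphism, with the only difference being that you write out the complex-arithmetic bookkeeping (and the positivity of the scalar $|Z(\ch)|^{-2}$) more explicitly.
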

\begin{proof} By the definition, $w_{\omega,\beta} = |Z(E)|^2 \mathcal{I}(\mho_Z)$. Applying the Mukai morphism, we get (\ref{eq-linebundle-mukaivector}). Taking the complex conjugate of (\ref{Omega-Z}) we get $\overline{\langle \mho_Z, \mathbf{v} \rangle_S}=\Re Z(\ch)-\sqrt{-1}\Im Z(\ch)$. The relation (\ref{eq-w}) thus follows from the definition of $w_{\omega,\beta}$. 
By the definition of $\mho_Z$, we have $\langle\Re \mho_Z,\mathbf{v}\rangle_S+\sqrt{-1}\langle\Im \mho_Z,\mathbf{v}\rangle_S=\langle \mho_Z,\mathbf{v}\rangle_S=\Re Z(\ch)+\sqrt{-1}\Im Z(\ch)$. We then obtain the perpendicular relation
$\langle w_\sigma, \mathbf{v}\rangle_S=\left(\Im Z(\ch)\right)\langle\Re \mho_Z,\mathbf{v}\rangle_S-\left(\Re Z(\ch)\right)\langle\Im \mho_Z,\mathbf{v}\rangle_S=0$.
\end{proof}

The Mukai morphism is the dual version of the Donaldson morphism (see \cite[Proposition 4.4, Remark 5.5]{BM14a}). The surjectivity of the Mukai morphism is not known in general. We will compute the image of the local Bayer--Macr\`i map in Theorem~\ref{thm-geometric-meaning-decomposition-dim2}. 

Let $\mathcal{E}$ be a universal family over $M_{\sigma}(\ch)$. Denote by $\mathcal{F}$ the dual universal family over $M_{\Phi(\sigma)}(-\ch^*)$. Then $w_{\omega,-\beta}(-\ch^*)\in v(-\ch^*)^\perp$. 
\begin{lemma} \label{lem-duality-line-bundle}
Fix the Chern character $\ch$. Let $\sigma:=\sigma_{\omega,\beta}$, and assume that $Z_{\omega,\beta}(\ch)$ belongs to the open upper half-plane (as Remark~\ref{rmk-upper-half-plane}). Then 
\begin{equation*}
\ell_{\sigma}\cong \ell_{\Phi(\sigma)}, \text{ that is, } \theta_{\sigma,\mathcal{E}}(w_{\omega,\beta}(\ch))\cong \theta_{\Phi(\sigma),\mathcal{F}}(w_{\omega,-\beta}(-\ch^*)).
\end{equation*}
\end{lemma}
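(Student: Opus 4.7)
The plan is to show that $\ell_\sigma\cong\ell_{\Phi(\sigma)}$ means the numerical equality $\phi^*\ell_{\Phi(\sigma),\mathcal{F}}=\ell_{\sigma,\mathcal{E}}$ in $N^1(M_\sigma(\ch))$ via the isomorphism $\phi\colon M_\sigma(\ch)\xrightarrow{\sim}M_{\Phi(\sigma)}(-\ch^*)$ provided by Lemma~\ref{lem-duality}. Since $N^1$ and $N_1$ pair perfectly, it suffices to verify this on every projective integral curve $C\subset M_\sigma(\ch)$, using the defining formula~(\ref{Bayer-Macri-def}).

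The first ingredient is the elementary identity for the central charge under the Mukai involution: for any $E\in\DS$,
\[
Z_{\omega,-\beta}(E^\vee)=\overline{Z_{\omega,\beta}(E)},
\]
which follows directly from $\ch(E^\vee)=\ch(E)^*$, the relation $(\e^{\beta-\sqrt{-1}\omega})^*=\e^{-\beta+\sqrt{-1}\omega}$, and the surface identity $\int_S f\cdot g^*=\int_S f^*\cdot g$. In particular $Z_{\omega,-\beta}(-\ch^*)=-\overline{Z_{\omega,\beta}(\ch)}$, consistent with the slope-flip already recorded in Corollary~\ref{cor-derived-dual-stability}.

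The second ingredient is a Grothendieck-Verdier plus GRR computation of $\Phi_{\mathcal{F}}(\OO_{\phi(C)})=Rq_*\mathcal{F}|_{\phi(C)\times S}$, where $q\colon C\times S\to S$ and $p\colon C\times S\to C$ are the projections. Choose the universal family so that $\mathcal{F}|_{\phi(C)\times S}\cong R\sHom(\mathcal{E}|_{C\times S},\OO)[1]$ fiberwise. Applying GRR along $q$, together with the sign rule $q_*(x^*)=-(q_*x)^*$ forced by the odd relative complex dimension and the Künneth identification of the $C$-degree-zero part of $\ch(\mathcal{E}|_{C\times S})$ as $1\otimes\ch$ (because every fiber of $\mathcal{E}$ has class $\ch$), yields
\[
\ch\bigl(\Phi_{\mathcal{F}}(\OO_{\phi(C)})\bigr)=\ch(A_\omega)^*,\quad \ch(A_\omega)=\ch(A)+\deg(\omega_C)\cdot\ch,
\]
where $A:=\Phi_{\mathcal{E}}(\OO_C)=Rq_*\mathcal{E}|_{C\times S}$ and $A_\omega:=Rq_*(\mathcal{E}|_{C\times S}\otimes p^*\omega_C)$. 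Combined with the first ingredient, this gives
\[
Z_{\omega,-\beta}\bigl(\Phi_{\mathcal{F}}(\OO_{\phi(C)})\bigr)=\overline{Z_{\omega,\beta}(A)}+\deg(\omega_C)\cdot\overline{Z_{\omega,\beta}(\ch)}.
\]

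Assembling everything,
\[
-\frac{Z_{\omega,-\beta}\bigl(\Phi_{\mathcal{F}}(\OO_{\phi(C)})\bigr)}{Z_{\omega,-\beta}(-\ch^*)}=\overline{\left(\frac{Z_{\omega,\beta}(A)}{Z_{\omega,\beta}(\ch)}\right)}+\deg(\omega_C).
\]
Since $\deg(\omega_C)\in\RR$ and $\Im(\bar{z})=-\Im(z)$, the imaginary part equals $\Im\bigl(-Z_{\omega,\beta}(A)/Z_{\omega,\beta}(\ch)\bigr)=\ell_{\sigma,\mathcal{E}}.[C]$, finishing the proof. The main obstacle is the Grothendieck-Verdier-Riemann-Roch bookkeeping of the second step: the crucial observation that saves the day is that the relative dualizing twist by $p^*\omega_C$ only shifts the pushforward's Chern character by the real multiple $\deg(\omega_C)\cdot\ch$, because the $C$-constant part of the family's Chern character is exactly $\ch$. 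For integral but singular $C$, the same argument works verbatim with $\omega_C$ the dualizing complex and $\deg(\omega_C)=2p_a(C)-2$.
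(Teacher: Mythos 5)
Your proposal is correct, but it does substantially more than the paper, whose entire proof is a one-sentence appeal to Lemma~\ref{lem-duality}: the duality functor $\Phi$ identifies $M_{\sigma_{\omega,\beta}}(\ch)$ with $M_{\sigma_{\omega,-\beta}}(-\ch^*)$, and the equality of Bayer--Macr\`i classes is asserted to follow from that identification. You instead verify the identity curve by curve from the defining formula~(\ref{Bayer-Macri-def}), using two correct ingredients: the conjugation identity $Z_{\omega,-\beta}(E^\vee)=\overline{Z_{\omega,\beta}(E)}$ (hence $Z_{\omega,-\beta}(-\ch^*)=-\overline{Z_{\omega,\beta}(\ch)}$, matching Corollary~\ref{cor-derived-dual-stability}), and the Grothendieck--Verdier/GRR computation $\ch\bigl(\Phi_{\mathcal{F}}(\OO_{\phi(C)})\bigr)=\ch(A_\omega)^*$ with $\ch(A_\omega)=\ch(A)+\deg(\omega_C)\cdot\ch$, after which the imaginary parts agree because the dualizing twist only adds the real number $\deg(\omega_C)$ to $-Z(A)/Z(\ch)$. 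That last mechanism is also what quietly absorbs the one genuine ambiguity in your setup: $\mathcal{F}|_{\phi(C)\times S}$ agrees with $R\sHom(\mathcal{E}|_{C\times S},\OO)[1]$ only up to tensoring by a line bundle pulled back from the curve, and such a twist again shifts the pushforward's Chern character by a real multiple of the fiber class, so it does not affect $\Im$; similarly, for singular (even non-Gorenstein) integral curves the discrepancy is again a real multiple of $\ch$, so nothing breaks, though your ``works verbatim'' is the one place a referee might ask you to spell out the Baum--Fulton--MacPherson version of GRR or reduce to the K-theoretic statement. In short: the paper's argument is shorter but leaves the compatibility of $\ell_{\sigma,\mathcal{E}}$ with $\Phi$ and with the choice of universal family implicit; your computation makes both points explicit and shows the lemma is a purely numerical identity, which is a worthwhile addition even if the conclusion and the underlying idea (reduce everything to the duality isomorphism) are the same.
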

\begin{proof}
This is a consequence of the isomorphism of the moduli spaces
$$M_{\sigma_{\omega,\beta}}(\ch)\cong M_{\sigma_{\omega,-\beta}}(-\ch^*)$$
induced by the duality functor $\Phi(\cdot)=R\sHom(\cdot,\OO_S)[1]$.
\end{proof}

\subsection{The global Bayer--Macr\`i map} \label{section-identification-NS-group}
Let $\sigma$ be in a chamber $\mathtt{C}$. The line bundle $\ell_{\sigma,\mathcal{E}}$ is only defined \emph{locally}, that is,  $\ell_{\sigma,\mathcal{E}}\in N^1(M_\mathtt{C}(\ch))$. If we take another chamber $\mathtt{C}'$, we cannot say $\ell_{\sigma,\mathcal{E}}\in N^1(M_{\mathtt{C}'}(\ch))$ directly. We want to associate to $\ell_{\sigma,\mathcal{E}}$ the \emph{global} meaning in the following way. 

Let $\sigma\in \mathtt{C}$ and $\tau\in\mathtt{C}'$ be two generic numerical stability conditions in different chambers with respect to $\ch$. \emph{Assume} $M_\sigma(\ch)$ and $M_\tau(\ch)$ are nonempty and irreducible with universal families $\mathcal{E}$ and $\mathcal{F}$, respectively. And \emph{assume} that there  is a birational map between $M_\sigma(\ch)$ and $M_\tau(\ch)$, induced by a derived autoequivalence $\Psi$ of $\DS$ in the following sense: there exists a common open subset $U$ of $M_\sigma(\ch)$ and $M_\tau(\ch)$, with complements of \emph{codimension at least} $2$, such that, for any $u\in U$, the corresponding objects $\mathcal{E}_u\in M_\sigma(\ch)$ and $\mathcal{F}_u\in M_\tau(\ch)$ are related by $\mathcal{F}_u=\Psi(\mathcal{E}_u)$. Then the N\'{e}ron--Severi groups of $M_\sigma(\ch)$ and $M_\tau(\ch)$ can canonically be identified. So for a Mukai vector $w\in\mathbf{v}^{\perp}$, the two line bundles $\theta_{\mathtt{C},\mathcal{E}}(w)$ and $\theta_{\mathtt{C}',\mathcal{F}}(w)$ are identified. 

\begin{definition}\label{def-global-BM-map}
Fix a base geometric numerical stability condition $\sigma$ in a chamber. A \emph{global} Bayer--Macr\`i map $$\ell: \Stab^{\dag}(S)\to N^1(M_\sigma(\ch)),$$
is glued by the \emph{local} Bayer--Macr\`i map by the above identification.
\end{definition}

\begin{theorem} (\cite[Theorem 1.2]{BM14b} for K3 surface, \cite[Theorem 0.2]{LZ16} for $\mathbb{P}^2$)\label{theorem-global-BM}
Let $S$ be a K3 surface or the projective plane $\mathbb{P}^2$. Let $\ch$ be primitive character over $S$. There is a global Bayer--Macr\`i map. 
\end{theorem}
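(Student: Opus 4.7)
The plan is to reduce the global statement to a local wall-crossing analysis. By Bridgeland's theorem, the connected component $\Stab^{\dag}(S)$ admits a locally finite wall-and-chamber decomposition with respect to $\ch$, so any two generic stability conditions can be joined by a path crossing finitely many walls. It therefore suffices to establish, for each wall $\mathcal{W}$ separating adjacent chambers $\mathtt{C}$ and $\mathtt{C}'$, a derived autoequivalence $\Psi$ of $\DS$ together with a common open subset $U$ in both $M_\mathtt{C}(\ch)$ and $M_{\mathtt{C}'}(\ch)$ whose complements have codimension at least two, such that $\Psi$ realizes the correspondence $\mathcal{E}_u \leftrightarrow \mathcal{F}_u$ on $U$. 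Once this is in place, the N\'eron-Severi groups glue canonically in the sense of Section~\ref{section-identification-NS-group}, and path-independence of the gluing follows because in a neighbourhood of each wall the identification is induced by a single Fourier-Mukai kernel.

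The core of the proof is the wall-by-wall classification. Following \cite{BM14b}, a generic stability condition $\sigma_0 \in \mathcal{W}$ produces a rank-two Mukai sub-lattice spanned by the destabilizing Jordan-H\"older classes, and the numerical invariants of this lattice determine whether $\mathcal{W}$ is: (i) a \emph{flopping} wall, on which both chambers carry a dense open locus of stable objects and the induced birational map is already an isomorphism in codimension one; (ii) a \emph{divisorial} wall of Brill-Noether, Hilbert-Chow, or Li-Gieseker-Uhlenbeck type, on which a unique exceptional divisor collapses on one side and is identified with an isomorphic open locus on the other side outside the contracted image; or (iii) a \emph{totally semistable} wall, which after applying an explicit spherical-twist autoequivalence is reduced to case (i) or (ii). For $\mathbb{P}^2$ the analogous classification, using the quiver region and the $(s,q)$-model together with the action of spherical twists on the exceptional collection, is carried out in \cite{LZ16}.

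The primitivity hypothesis enters in two essential ways: it ensures that at a generic wall point the strictly semistable locus has codimension at least one (so stable objects are dense on both sides, and the Bayer-Macr\`i criterion of Theorem~\ref{thm-Bayer-Macri} combined with Lemma~\ref{lem-w-sigma-1} really produces a nef class), and it guarantees the existence of universal families that make the Mukai morphism $\theta_{\mathtt{C},\mathcal{E}}$ well-defined. The main obstacle will be handling the divisorial walls of case (ii): one must verify that although a divisor is contracted on one side, the corresponding locus on the other side is of codimension at least two, so that the canonical N\'eron-Severi identification is not spoiled. This is settled by the lattice-theoretic classification above, where the type of exceptional behaviour is read off directly from the Mukai pairing of $\mathbf{v}=v(\ch)$ with the spherical or isotropic class generating the relevant rank-two sub-lattice, thereby pinning down exactly which autoequivalence $\Psi$ to invoke and confirming that the contracted locus meets $U$ in codimension at least two.
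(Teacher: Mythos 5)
The paper itself gives no proof of this statement: Theorem~\ref{theorem-global-BM} is imported wholesale from \cite{BM14b} (for K3 surfaces) and \cite{LZ16} (for $\mathbb{P}^2$), so the only meaningful comparison is with those proofs, of which your outline is essentially a digest --- locally finite wall-and-chamber structure, a wall-by-wall classification into flopping, divisorial and totally semistable walls, spherical twists to handle the totally semistable case, and primitivity to guarantee generic stability and (quasi-)universal families so that $\theta_{\mathtt{C},\mathcal{E}}$ is defined. At that level the skeleton is faithful, but it is a summary of the cited arguments rather than a proof, and one step of your summary is actually wrong.

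The flaw is in your resolution of what you yourself identify as the main obstacle, the divisorial walls. You propose to show that the divisor contracted on one side corresponds to a locus of codimension at least two on the other side, and conclude that the identification of N\'eron-Severi groups ``is not spoiled.'' This is backwards: if that were the geometry, the common open subset $U$ of Section~\ref{section-identification-NS-group} would have a codimension-one complement on the first side, the hypothesis ``complements of codimension at least two'' in Definition~\ref{def-global-BM-map} would fail, and $N^1$ of the two models could not be canonically identified (their Picard numbers need not even agree). What \cite{BM14b} actually proves is the opposite: for a K3 surface the moduli spaces flanking any wall are projective hyperk\"ahler (hence K-trivial) varieties, so any birational map between them is automatically an isomorphism in codimension one --- the contracted divisor corresponds to a divisor --- and at divisorial walls the two sides are identified by an explicit (anti-)autoequivalence (spherical twist, derived dual composed with a twist, etc.), both contracting the same divisor to the model at the wall. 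For $\mathbb{P}^2$ there is no K-triviality to invoke, and establishing the codimension-one comparison chamber by chamber is precisely the hard content of \cite{LZ16}; it cannot simply be read off from the Mukai pairing of $\mathbf{v}$ with a rank-two sublattice as you assert. Finally, your one-sentence claim of path-independence of the gluing is an assertion, not an argument: compatibility of the identifications across different walls and around loops of chambers is part of what the cited theorems prove, via the fact that every identification intertwines the Mukai morphisms on $\mathbf{v}^{\perp}$.
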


\section{Bayer--Macr\`i decomposition}\label{section-BM-decomposition}
In this section, we give an \emph{intrinsic} decomposition of the Mukai vector $w_{\omega,\beta}(\ch)$ in Lemmas \ref{lem-decomposition-dim1} and \ref{lem-decomposition-dim2}, respectively, according to the dimension of the support of objects with invariants $\ch$. In particular, each component is in $\mathbf{v}^\perp$. So we can apply $\theta_{\sigma,\mathcal{E}}$ and obtain the \emph{intrinsic} decomposition of $\ell_{\sigma_{\omega,\beta}}$. We call such a decomposition of $w_{\omega,\beta}$ or $\ell_{\sigma_{\omega,\beta}}$ the \emph{Bayer--Macr\`i decomposition}.

\subsection{Preliminary computation by using $\mho_Z$}
\begin{lemma} \label{lem-w-sigma}
If $\Im Z(\ch)=0$, then $w_\sigma \xlongequal{\RR_+} \Im \mho_Z$. If $\Im Z(\ch)>0$, then 
\begin{eqnarray}\label{eq-w_sigma-decomposition}
w_\sigma &\xlongequal{\RR_+}& \mu_\sigma(\ch)\Im \mho_Z+\Re \mho_Z \\
         &=& \left(0,\mu_\sigma(\ch)\omega+\beta,-\frac{3}{4}\KS.(\mu_\sigma(\ch)\omega+\beta)\right)\nonumber\\
         & & +\left(1,-\frac{3}{4}\KS,-\frac{1}{2}\chi(\OO_S)+\frac{11}{32}\KS^2)\right)\nonumber\\
         & & +\left(0,0,\beta.(\mu_\sigma(\ch)\omega+\beta)-\frac{1}{2}(\omega^2+\beta^2)\right).\nonumber
\end{eqnarray} 
\end{lemma}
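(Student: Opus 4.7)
The strategy is to deduce Lemma~\ref{lem-w-sigma} directly from the compact identity
$$w_{\omega,\beta} = (\Im Z(\ch))\,\Re \mho_Z - (\Re Z(\ch))\,\Im \mho_Z$$
already established in Lemma~\ref{lem-w-sigma-1}, combined with the coordinate description of $\mho_Z = \mho_{Z_{\omega,\beta}}$ given in~(\ref{Omega-Z-Mukai}). The argument then splits naturally on the sign of $\Im Z(\ch)$.

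First I would dispose of the degenerate case $\Im Z(\ch) = 0$. The formula collapses to $w_\sigma = -\Re Z(\ch)\,\Im \mho_Z$, so to conclude $w_\sigma \xlongequal{\RR_+} \Im \mho_Z$ I must verify that the scalar $-\Re Z(\ch)$ is strictly positive. This is forced by the positivity axiom of the stability condition $\sigma_{\omega,\beta}$: any nonzero $E \in \A_{\omega,\beta}$ with $\ch(E)=\ch$ has $Z(E) \in \RR_{>0}\cdot \e^{(0,1]i\pi}$, so the vanishing of the imaginary part pushes $Z(\ch)$ onto the negative real axis and $\Re Z(\ch) < 0$ as required.

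For the generic case $\Im Z(\ch) > 0$, I would divide the Lemma~\ref{lem-w-sigma-1} identity by $\Im Z(\ch) > 0$ (harmless under $\xlongequal{\RR_+}$) and use $\mu_\sigma(\ch) = -\Re Z(\ch)/\Im Z(\ch)$ to obtain the first line $w_\sigma \xlongequal{\RR_+} \mu_\sigma(\ch)\,\Im \mho_Z + \Re \mho_Z$. Plugging in the coordinate expressions for $\Re \mho_Z$ and $\Im \mho_Z$ from~(\ref{Omega-Z-Mukai}) settles degrees $0$ and $1$ immediately. For degree $2$ I would expand $(\beta-\tfrac{3}{4}\KS)^2$ and regroup into the three pieces of the statement: the $-\tfrac{3}{4}\KS$ pairing against the divisorial class $\mu_\sigma(\ch)\omega+\beta$ (Term A), an intrinsic piece involving only $\chi(\OO_S)$ and $\KS^2$ (Term B), and a remainder bilinear in $\omega,\beta$ (Term C). The only arithmetic worth flagging is the $\KS^2$-coefficient $\tfrac{11}{32}$ of Term B, which arises as $\tfrac{9}{32}+\tfrac{1}{16}$ coming from $(\beta-\tfrac{3}{4}\KS)^2$ and from the factor $-\tfrac{1}{2}(\chi(\OO_S)-\tfrac{1}{8}\KS^2)$ respectively.

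Beyond this mechanical bookkeeping there is no real obstacle. The sole conceptual input beyond algebra is the positivity axiom used in the boundary case $\Im Z(\ch)=0$, which cannot be replaced by a formal manipulation; the rest is regrouping of coefficients.
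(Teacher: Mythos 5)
Your proposal is correct and follows essentially the same route as the paper's proof: start from the identity $w_{\omega,\beta}=(\Im Z(\ch))\Re\mho_Z-(\Re Z(\ch))\Im\mho_Z$ of Lemma~\ref{lem-w-sigma-1}, divide by $\Im Z(\ch)>0$ to get the first line, and substitute the coordinate expression~(\ref{Omega-Z-Mukai}) to regroup the degree-two terms (your bookkeeping, including the $\frac{9}{32}+\frac{1}{16}=\frac{11}{32}$ coefficient, checks out). Your only addition is making explicit, via the positivity axiom, why $-\Re Z(\ch)>0$ in the boundary case $\Im Z(\ch)=0$, a point the paper leaves implicit.
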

\begin{proof} The case for $\Im Z(\ch)=0$ follows from (\ref{eq-w}). If $\Im Z(\ch)>0$, we divide (\ref{eq-w}) by this positive number and obtain (\ref{eq-w_sigma-decomposition}). The concrete formula is then derived by (\ref{Omega-Z-Mukai}).
\end{proof}

\begin{lemma}\label{lem-direct-relations-using-centers}
Fix a frame $(H,\gamma,u)$. We have relations
\begin{eqnarray}
\mu_{\sigma}(\ch)\omega+\beta &=& C(\ch,\ch')H+u\gamma, \label{eq-relation1}
\\
\beta.(\mu_\sigma(\ch)\omega+\beta)-\frac{1}{2}(\omega^2+\beta^2) &=& -\frac{g}{2}D(\ch,\ch')-\frac{d}{2}u^2, \label{eq-relation2}
\end{eqnarray}
where the numbers $C(\ch,\ch')$ and  $D(\ch,\ch')$ are given by (\ref{eq-C_sigma}) and (\ref{eq-D_sigma}).
\end{lemma}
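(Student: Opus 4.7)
The plan is to expand both sides in the frame coordinates $(s,t)$ and to exploit the fact that the stated identities are to be read on the potential wall $W(\ch,\ch')$ (otherwise the left hand sides would depend on $(s,t)$ while the right hand sides do not). Writing $\omega=tH$, $\beta=sH+u\gamma$ and using $H.\gamma=0$, $H^2=g$, $\gamma^2=-d$, a direct expansion of $Z_{\omega,\beta}$ from Section~\ref{section-Bridgeland-stability} yields
\[
\Im Z(\ch)=tg(y_1-xs), \qquad \Re Z(\ch)=-z+sy_1 g - uy_2 d+\tfrac{1}{2}x(t^2 g-s^2 g+u^2 d),
\]
and analogously for $\ch'$ with primed coefficients. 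I would establish the two relations in order, since the second feeds on the first.

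For relation \eqref{eq-relation1}, the coefficient of $\gamma$ on each side is visibly $u$, so the claim reduces to $t\mu_\sigma(\ch)+s=C(\ch,\ch')$. Since we are on the wall, $\mu_\sigma(\ch)=\mu_\sigma(\ch')$, and I would replace $\mu_\sigma(\ch)=-\Re Z(\ch)/\Im Z(\ch)$ by the cross-ratio
\[
-\frac{r\Re Z(\ch)-x\Re Z(\ch')}{r\Im Z(\ch)-x\Im Z(\ch')}.
\]
The key point is that this particular combination is designed to cancel the genuinely quadratic $(s,t)$-contribution $\tfrac{1}{2}xr(t^2g-s^2g+u^2d)$ in the numerator and the $rxs$ mixing in the denominator, leaving a polynomial linear in $s$ divided by $-tg(xc_1-ry_1)$. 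Regrouping the result as $t\mu_\sigma(\ch)+s$ produces exactly the formula \eqref{eq-C_sigma} for $C(\ch,\ch')$.

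For relation \eqref{eq-relation2}, I would substitute the just-proved \eqref{eq-relation1} into the left hand side. Using $(sH+u\gamma).(C(\ch,\ch')H+u\gamma) = s C(\ch,\ch') g - u^2 d$ and $\omega^2+\beta^2=(t^2+s^2)g - u^2 d$, the left hand side becomes
\[
sC(\ch,\ch')g - u^2 d - \tfrac{1}{2}\bigl((t^2+s^2)g - u^2 d\bigr).
\]
At this stage I would invoke the wall equation \eqref{eq-wall-equation}, which rearranges to $s^2+t^2=2Cs+D$. Substituting eliminates the $sC(\ch,\ch')g$ term against the contribution $-Csg$, leaving exactly $-\tfrac{g}{2}D(\ch,\ch')-\tfrac{d}{2}u^2$.

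The main obstacle is more conceptual than technical: one needs to recognize that the identities are wall-relations and then select the precise linear combination of $\Re Z$ and $\Im Z$ for $\ch$ and $\ch'$ that simultaneously cancels the $(s,t)$-quadratic terms. Once this observation is made, the rest is bookkeeping together with a single application of the wall equation, and no further ingredient beyond Theorem~\ref{thm-Maciocia} is required.
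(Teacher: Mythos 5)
Your proof is correct, and for the second relation it coincides with the paper's argument essentially verbatim: substitute \eqref{eq-relation1}, expand the intersection numbers in the frame, and eliminate $s^2+t^2$ via the wall equation \eqref{eq-wall-equation}. For the first relation you take a genuinely different, and arguably cleaner, route. The paper also reduces to showing $t\mu_{\sigma}(\ch)+s=C(\ch,\ch')$, but it does so by substituting the semicircle equation $s^2+t^2=2Cs+D$ into the explicit expression for $\mu_{\sigma}(\ch)$ and then verifying the resulting identity, which requires invoking the relation \eqref{eq-D-C} between $D$ and $C$ when $x\neq 0$ and a separate direct check when $x=0$. You instead use the defining property of the potential wall, $\mu_{\sigma}(\ch)=\mu_{\sigma}(\ch')$, to replace $\mu_{\sigma}(\ch)$ by the weighted mediant $-\bigl(r\Re Z(\ch)-x\Re Z(\ch')\bigr)/\bigl(r\Im Z(\ch)-x\Im Z(\ch')\bigr)$; the quadratic contributions $\tfrac{1}{2}xr(t^2g-s^2g+u^2d)$ cancel, the denominator becomes $tg(ry_1-xc_1)$, and $t\mu_{\sigma}(\ch)+s$ collapses directly to the formula \eqref{eq-C_sigma} for $C(\ch,\ch')$, with no case split and no use of $D$ at all; the nonvanishing you need, $ry_1-xc_1\neq 0$, is exactly the condition for $C$ to be defined, so nothing is lost. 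Your preliminary observation that the identities must be read on $W(\ch,\ch')$ is also a useful clarification: the paper leaves this implicit (its proof uses the wall equation), and the lemma is indeed only applied to $w_{\sigma\in W(\ch,\ch')}$ as in \eqref{eq-w-sigma-general}.
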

\begin{proof}
The proof is a direct computation by using Maciocia's Theorem~\ref{thm-Maciocia}. For the reader's convenience, we give the details. 
For (\ref{eq-relation1}), we only need to check that
\begin{equation*}
\mu_{\sigma}(\ch)t+s = C(\ch,\ch').
\end{equation*}
Recall that the wall equation is $(s-C)^2+t^2=D+C^2$. Now
\begin{eqnarray*}
\mu_{\sigma}(\ch)t+s&=&\frac{z-sy_1 g+uy_2d+\frac{x}{2}\left(s^2g-u^2d-t^2g\right)}{(y_1-xs)g}+s \\
&=&\frac{z+uy_2d-\frac{x}{2}u^2d-\frac{xg}{2}(s^2+t^2)}{(y_1-xs)g}\\
&=&\frac{z+uy_2d-\frac{x}{2}u^2d-\frac{xg}{2}(2sC+D)} {(y_1-xs)g}\quad \text{ by using the wall equation.}
\end{eqnarray*}
So we only need to check that
\begin{equation}\label{eq-check}
z+uy_2d-\frac{x}{2}u^2d-\frac{xg}{2}(2sC+D)=(y_1-xs)gC.
\end{equation}
If $x=0$, then (\ref{eq-check}) is true since $C=\frac{z+duy_2}{gy_1}$. If $x\neq 0$, then (\ref{eq-check}) is still true by using (\ref{eq-D-C}).

Let us prove (\ref{eq-relation2}). We have that
\begin{eqnarray*}
\textrm{LHS of (\ref{eq-relation2})}&=&
(sH+u\gamma).(CH+u\gamma)-\frac{g}{2}(t^2+s^2-\frac{d}{g}u^2)\\
&=& sCg-u^2 d-\frac{g}{2}(2sC+D-\frac{d}{g}u^2)=\textrm{RHS of (\ref{eq-relation2})}
\end{eqnarray*}
\end{proof}

\begin{definition}
Fix a frame $(H,\gamma,u)$.  Define the vector $\mathbf{t}_{(H,\gamma,u)}(\ch, \ch')$ as
\begin{equation*}
\left(1,CH+u\gamma-\frac{3}{4}\KS, -\frac{3}{4}\KS.(CH+u\gamma)-\frac{1}{2}\chi(\OO_S)+\frac{11}{32}\KS^2\right),
\end{equation*}
where the center $C=C(\ch,\ch')$ is as in (\ref{eq-C_sigma}).
\end{definition}

\begin{lemma} If $\Im Z_{\omega,\beta}(\ch)>0$, then
\begin{equation}\label{eq-w-sigma-general}
w_{\sigma\in W(\ch,\ch')} \xlongequal{\RR_+} \left(\frac{g}{2}D(\ch,\ch')+\frac{d}{2}u^2\right)(0,0,-1)+\mathbf{t}_{(H,\gamma,u)}(\ch, \ch').
\end{equation}
\end{lemma}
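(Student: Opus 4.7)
The plan is to substitute the wall-center identities of Lemma~\ref{lem-direct-relations-using-centers} into the explicit three-term decomposition of $w_\sigma$ furnished by Lemma~\ref{lem-w-sigma}. Since $\Im Z_{\omega,\beta}(\ch)>0$ by hypothesis, Lemma~\ref{lem-w-sigma} immediately gives (\ref{eq-w_sigma-decomposition}) up to a positive real scalar. The key observation is that each of the three summands of (\ref{eq-w_sigma-decomposition}) contains exactly one of the two data packages, $\mu_\sigma(\ch)\omega+\beta$ or $\beta.(\mu_\sigma(\ch)\omega+\beta)-\tfrac{1}{2}(\omega^2+\beta^2)$, that Lemma~\ref{lem-direct-relations-using-centers} rewrites in terms of $C(\ch,\ch')$, $D(\ch,\ch')$, $u$, and the frame data $(H,\gamma)$.

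First, I would apply (\ref{eq-relation1}) to replace $\mu_\sigma(\ch)\omega+\beta$ in the first summand of (\ref{eq-w_sigma-decomposition}) by $C(\ch,\ch')H+u\gamma$, producing the vector $\bigl(0,\,CH+u\gamma,\,-\tfrac{3}{4}\KS.(CH+u\gamma)\bigr)$. Adding this to the purely topological (chamber-independent) second summand of (\ref{eq-w_sigma-decomposition}) recovers term-by-term the vector $\mathbf{t}_{(H,\gamma,u)}(\ch,\ch')$ from the definition preceding the statement. Second, I would apply (\ref{eq-relation2}) to the third summand of (\ref{eq-w_sigma-decomposition}), which collapses it to $\bigl(0,\,0,\,-\tfrac{g}{2}D(\ch,\ch')-\tfrac{d}{2}u^2\bigr) = \bigl(\tfrac{g}{2}D(\ch,\ch')+\tfrac{d}{2}u^2\bigr)(0,0,-1)$. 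Combining the two pieces yields the claimed formula (\ref{eq-w-sigma-general}).

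There is no genuine obstacle; the lemma is a bookkeeping consolidation of the two preceding results. The only things to verify carefully are that the minus sign in (\ref{eq-relation2}) matches the factor $(0,0,-1)$ appearing on the right-hand side, and that the positive rescaling implicit in $\xlongequal{\RR_+}$ from Lemma~\ref{lem-w-sigma} is carried consistently through the substitutions so that the final identity is again only up to a positive real multiple.
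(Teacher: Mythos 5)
Your proposal is correct and is exactly the paper's argument: the paper proves this lemma as a direct computation combining (\ref{eq-w_sigma-decomposition}) with the wall identities (\ref{eq-relation1}) and (\ref{eq-relation2}), precisely the substitution you carry out. The sign and the $\RR_+$-scaling bookkeeping you flag are handled just as you describe.
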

\begin{proof}
This is a direct computation by using (\ref{eq-w_sigma-decomposition}), (\ref{eq-relation1}), and (\ref{eq-relation2}).
\end{proof}

\subsection{The local Bayer--Macr\`i decomposition}
We decompose $w_\sigma$ into three cases according to the dimension of the support of objects with invariants $\ch$.
\emph{Assume} there is a flat family $\mathcal{E}\in \mathrm{D^b}(M_\sigma(\ch)\times S)$, and  denote the Mukai morphism by $\theta_{\sigma,\mathcal{E}}$.
\subsubsection{Supported in dimension $0$} Fix $\ch=(0,0,n)$, with $n$ a positive integer. Fix a frame $(H,\gamma,u)$. Since $t>0$ is the trivial chamber and there is no wall on $\Pi_{(H,\gamma,u)}$, we obtain $w_\sigma\xlongequal{\RR_+}\Im \mho_Z \xlongequal{\RR_+} \left(0,H,(\beta-\frac{3}{4}\KS).H\right)$, and the nef line bundle $\ell_\sigma=\theta_{\sigma,\mathcal{E}}(0,H,(sH-\frac{3}{4}\KS).H)$ on the moduli space $M_\sigma(\ch)\cong \mathrm{Sym}^n(S)$ (see \cite[Lemma 2.10]{LQ14}), which is independent of $s$.
\subsubsection{Supported in dimension $1$}\label{section-supp-dim1}
Fix a frame $(H,\gamma,u)$. We assume that $\ch=(0,\ch_1,\ch_2)$ with $\ch_1.H>0$. Now the center is given by $C=\frac{z+duy_2}{gy_1}$, which is independent of $\ch'$. 
So the vector
\begin{equation*}
\mathbf{t}_{(H,\gamma,u)}(\ch):=\mathbf{t}_{(H,\gamma,u)}(\ch, \ch')
\end{equation*}
is also independent of $\ch'$. There is another special vector 
\begin{equation*}
w_{\infty H,\beta} \xlongequal{\RR_+} (0,0,-1).
\end{equation*}
We get two well-defined line bundles in the following theorem:
\begin{equation}\label{eq-S-T}
\mathcal{S}:=\theta_{\sigma,\mathcal{E}}(0,0,-1),\quad \mathcal{T}_{(H,\gamma,u)}(\ch):=-\theta_{\sigma,\mathcal{E}}(\mathbf{t}_{(H,\gamma,u)}(\ch)).
\end{equation}
\begin{lemma} \label{lem-decomposition-dim1} (The local Bayer--Macr\`i decomposition in dimension $1$.) 
Fix a frame $(H,\gamma,u)$. Assume $\ch=(0,\ch_1,\ch_2)$ with $\ch_1.H>0$.
\begin{enumerate}
\item[(a)] There is a decomposition
\begin{equation*}\label{eq-w-sigma-general-dim1}
w_{\sigma\in W(\ch,\ch')} \xlongequal{\RR_+} \left(\frac{g}{2}D(\ch,\ch')+\frac{d}{2}u^2\right)(0,0,-1)+\mathbf{t}_{(H,\gamma,u)}(\ch),
\end{equation*}
where $(0,0,-1)$, $\mathbf{t}_{(H,\gamma,u)}(\ch)\in\mathbf{v}^\perp$. Moreover, $r=\ch_0'\neq 0$, and the coefficient before $(0,0,-1)$ is expressed in terms of the potential destabilizing Chern character $\ch'=(r,c_1 H+c_2 \gamma + \delta',\chi)$:
\begin{equation}\label{eq-w-sigma-1dim-simplify}
\frac{g}{2}D(\ch,\ch')+\frac{d}{2}u^2=\frac{\chi-gCc_1+udc_2}{r}.
\end{equation} 
\item[(b)]  Assume that there is a flat family $\mathcal{E}\in \mathrm{D^b}(M_\sigma(\ch)\times S)$. Then the Bayer--Macr\`i nef line bundle on the moduli space $M_\sigma(\ch)$ has a decomposition
\begin{equation}\label{eq-line-bundle-dim1}
\ell_{\sigma\in W(\ch,\ch')}\xlongequal{\RR_+}\left(\frac{g}{2}D(\ch,\ch')+\frac{d}{2}u^2\right)\mathcal{S}-\mathcal{T}_{(H,\gamma,u)}(\ch).
\end{equation}
\end{enumerate}
\end{lemma}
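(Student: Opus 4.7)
The plan is to derive part (a) by specializing the general wall decomposition formula (\ref{eq-w-sigma-general}) to the case $\ch_0 = 0$, and then to deduce part (b) by applying the algebraic Mukai morphism $\theta_{\sigma, \mathcal{E}}$ term by term. The starting identity is
\begin{equation*}
w_{\sigma \in W(\ch,\ch')} \xlongequal{\RR_+} \left(\tfrac{g}{2}D(\ch,\ch') + \tfrac{d}{2}u^2\right)(0,0,-1) + \mathbf{t}_{(H,\gamma,u)}(\ch, \ch').
\end{equation*}
The first key observation is that when $\ch_0 = x = 0$ with $\ch_1 \cdot H > 0$ (so $y_1 > 0$), the second bullet of Theorem \ref{thm-Maciocia} gives both (i) that any potential destabilizing character must satisfy $r = \ch_0' \neq 0$ and (ii) that the center $C(\ch, \ch') = (z + duy_2)/(gy_1)$ depends only on $\ch$ and the frame, not on $\ch'$. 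Consequently $\mathbf{t}_{(H,\gamma,u)}(\ch, \ch')$ collapses to an intrinsic vector $\mathbf{t}_{(H,\gamma,u)}(\ch)$, yielding the stated decomposition.

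Next I would verify the simplification (\ref{eq-w-sigma-1dim-simplify}): substituting the expression (\ref{eq-D-C-r}) for $D$ into $\frac{g}{2}D + \frac{d}{2}u^2$ causes the $\frac{1}{2}u^2 d$ contributions to cancel, collapsing the remainder to $(\chi - gCc_1 + udc_2)/r$; this is a one-line algebraic step. For the perpendicularity assertions, I would invoke Lemma \ref{lem-w-sigma-1} to place $w_\sigma$ in $\mathbf{v}^\perp$, and then check directly from the Mukai pairing formula (\ref{Mukaipairingsurface}) that $\langle (0,0,-1), \mathbf{v} \rangle_S = 0$: with $\ch_0 = 0$ the Mukai vector $\mathbf{v}$ has vanishing $v_0$-component, and all four terms of (\ref{Mukaipairingsurface}) vanish against $w = (0,0,-1)$. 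Once $(0,0,-1) \in \mathbf{v}^\perp$ is known, the complementary summand $\mathbf{t}_{(H,\gamma,u)}(\ch)$ automatically lies in $\mathbf{v}^\perp$.

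Part (b) is then immediate from Lemma \ref{lem-w-sigma-1}, which says $\ell_\sigma \xlongequal{\RR_+} \theta_{\sigma, \mathcal{E}}(w_\sigma)$, together with the linearity of the Mukai morphism and the definitions (\ref{eq-S-T}) of $\mathcal{S}$ and $\mathcal{T}_{(H,\gamma,u)}(\ch)$.

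There is no serious conceptual obstacle: the whole argument is a bookkeeping exercise once the general formula (\ref{eq-w-sigma-general}) is in hand. The structurally important point, which I would emphasize, is that the center $C$ is constant across all potential walls for a fixed $\ch$ with $\ch_0 = 0$, whereas it varies with $\ch'$ in the dimension-two case $\ch_0 \neq 0$. This is precisely what makes $\mathbf{t}_{(H,\gamma,u)}(\ch)$, and hence the ``fixed'' line bundle $\mathcal{T}_{(H,\gamma,u)}(\ch)$, depend only on $\ch$ and the frame, and is what forces the separate dimension-one and dimension-two treatments in Lemma \ref{lem-decomposition-dim1} versus Lemma \ref{lem-decomposition-dim2}.
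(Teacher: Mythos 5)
Your proposal is correct and follows essentially the same route as the paper: specialize the general formula~(\ref{eq-w-sigma-general}) using that $C$ is independent of $\ch'$ when $\ch_0=0$, check $(0,0,-1)\in\mathbf{v}^\perp$ via~(\ref{Mukaipairingsurface}) and deduce $\mathbf{t}_{(H,\gamma,u)}(\ch)\in\mathbf{v}^\perp$ from $w_\sigma\in\mathbf{v}^\perp$, simplify the coefficient with~(\ref{eq-D-C-r}), and obtain (b) by applying $\theta_{\sigma,\mathcal{E}}$. The paper's proof is the same computation, differing only in that it also records the direct Mukai-pairing verification of $\mathbf{t}_{(H,\gamma,u)}(\ch)\in\mathbf{v}^\perp$ as an alternative to your complementary-summand argument.
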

\begin{proof}
Part (a) follows from computation. The Mukai vector $\mathbf{v}$ is given by
$$\mathbf{v}=(0,\ch_1,\ch_2-\frac{1}{4}\ch_1.\KS).$$
So $(0,0,-1) \in \mathbf{v}^\perp$ by the definition in (\ref{eq-v-perpendicular}) and the formula (\ref{Mukaipairingsurface}). To show $\mathbf{t}_{(H,\gamma,u)}(\ch)\in \mathbf{v}^\perp$, we can either directly compute the Mukai pairing
$$\langle \mathbf{t}_{(H,\gamma,u)}(\ch), \mathbf{v}\rangle_S=(CH+u\gamma-\frac{3}{4}\KS)\cdot \ch_1-(\ch_2-\frac{1}{4}\ch_1.\KS-\frac{1}{2}\ch_1.\KS)=0$$
or note the relation (\ref{eq-w-sigma-general}) and the fact that $w_{\sigma\in W(\ch,\ch')}\in \mathbf{v}^\perp$, $(0,0,-1)\in \mathbf{v}^\perp$. Then $\mathcal{S}$ and $\mathcal{T}_{(H,\gamma,u)}(\ch)$ are well defined in (\ref{eq-S-T}).
Recall (\ref{eq-D_sigma}) for $D(\ch,\ch')$. Since $x=\ch_0=0$, we obtain $r\neq 0$. 
The relation (\ref{eq-w-sigma-1dim-simplify}) is then derived by using (\ref{eq-D-C-r}). Part (b) follows from part (a) by applying the Mukai morphism $\theta_{\sigma,\mathcal{E}}$.
\end{proof}

\subsubsection{Supported in dimension $2$} \label{section-supp-dim2}
Assume that $\ch_0\neq 0$. If $\ch_0<0$, we observe
\begin{equation*} \label{w-computation-DUW}
w_{\infty H,\beta} \xlongequal{\RR_+} (0,H,(\frac{\ch_1}{\ch_0}-\frac{3}{4}\KS)H) \xlongequal{\RR_+} w_{\sigma\in \mathtt{DUW}}.
\end{equation*}

\begin{definition}
Fix $\ch=(\ch_0,\ch_1,\ch_2)$ with $\ch_0\neq 0$, and define
\begin{eqnarray*}
\mathbf{w}(\ch)&:=&\left(1,-\frac{3}{4}\KS,-\frac{\ch_2}{\ch_0}-\frac{1}{2}\chi(\OO_S)+\frac{11}{32}\KS^2\right),\nonumber\\
\mathbf{m}(L,\ch)&:=& \left(0,L,(\frac{\ch_1}{\ch_0}-\frac{3}{4}\KS).L\right), \text{ where } L\in N^1(S),
\end{eqnarray*}
$$\mathbf{u}(\ch):=\mathbf{w}(\ch)+\mathbf{m}(\frac{1}{2}\KS,\ch)=\left(1,-\frac{1}{4}\KS,-\frac{\ch_2}{\ch_0}+\frac{\ch_1.\KS}{2\ch_0}-\frac{1}{2}\chi(\OO_S)-\frac{1}{32}\KS^2\right).$$
\end{definition}
\begin{lemma}\label{lem-perp-v} We have the following three perpendicular relations for Mukai vectors: 
$$\mathbf{m}(L,\ch), \ \mathbf{w}(\ch),\ \mathbf{u}(\ch)\in \mathbf{v}^\perp.$$
\end{lemma}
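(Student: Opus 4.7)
The plan is to verify each of the three perpendicular relations by direct substitution into the Mukai pairing formula~(\ref{Mukaipairingsurface}), using the explicit description of $\mathbf{v}=v(\ch)$ from~(\ref{Mukaivectorsurface}):
$$\mathbf{v}=\left(\ch_0,\ \ch_1-\tfrac{1}{4}\ch_0\KS,\ \ch_2-\tfrac{1}{4}\ch_1.\KS+\tfrac{1}{2}\ch_0\bigl(\chi(\OO_S)-\tfrac{1}{16}\KS^2\bigr)\right).$$
Each pairing $\langle\cdot,\mathbf{v}\rangle_S$ then reduces to a finite combination of intersection numbers involving $\KS$, $\ch_i$, and (in the case of $\mathbf{m}$) the class $L$.

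First I would handle $\mathbf{m}(L,\ch)$. Since $w_0=0$, the pairing formula collapses to $w_1.v_1-v_0(w_2+\tfrac{1}{2}w_1.\KS)$. Plugging in $w_1=L$, $w_2=(\tfrac{\ch_1}{\ch_0}-\tfrac{3}{4}\KS).L$ and $v_0=\ch_0$, the $L.\ch_1$ contributions cancel, and the coefficients of $\ch_0 L.\KS$ add up to $-\tfrac{1}{4}+\tfrac{3}{4}-\tfrac{1}{2}=0$, giving $\langle\mathbf{m}(L,\ch),\mathbf{v}\rangle_S=0$. Next I would treat $\mathbf{w}(\ch)$ by substituting $w_0=1$, $w_1=-\tfrac{3}{4}\KS$, $w_2=-\tfrac{\ch_2}{\ch_0}-\tfrac{1}{2}\chi(\OO_S)+\tfrac{11}{32}\KS^2$ and expanding. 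The expansion splits naturally into three parts: a $\ch_1.\KS$-part coming from $w_1.v_1$ and $\tfrac{1}{2}w_0 v_1.\KS$, a part involving $\ch_2/\ch_0$ (which is cleared by $-v_0 w_2$), and a purely numerical remainder in $\chi(\OO_S)$ and $\KS^2$; the constants $-\tfrac{3}{4}$ and $\tfrac{11}{32}$ in $\mathbf{w}(\ch)$ are precisely what is required to make each of these three parts vanish independently.

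Finally, $\mathbf{u}(\ch)=\mathbf{w}(\ch)+\mathbf{m}(\tfrac{1}{2}\KS,\ch)$ by the very definition, so its perpendicularity is immediate from $\RR$-linearity of the Mukai pairing in its first argument, without any further computation. The only real obstacle is bookkeeping the quarter and half factors of $\KS$ that appear in both $\mathbf{v}$ and in the pairing formula~(\ref{Mukaipairingsurface}); there is no conceptual difficulty, and once $\mathbf{v}$ is written out explicitly each of the first two identities is a short calculation, and the third is free by linearity.
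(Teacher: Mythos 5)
Your proposal is correct and follows essentially the same route as the paper, which also verifies the relations by direct substitution of the Mukai vector~(\ref{Mukaivectorsurface}) into the pairing formula~(\ref{Mukaipairingsurface}); your cancellation bookkeeping (e.g. $-\tfrac{1}{4}+\tfrac{3}{4}-\tfrac{1}{2}=0$ for the $\ch_0 L.\KS$ terms) checks out. The observation that $\mathbf{u}(\ch)=\mathbf{w}(\ch)+\mathbf{m}(\tfrac{1}{2}\KS,\ch)$ makes the third relation follow by linearity is a small, valid shortcut consistent with the paper's definition.
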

\begin{proof}
The perpendicular relations can be checked directly by (\ref{eq-v-perpendicular}), (\ref{Mukaivectorsurface}) and (\ref{Mukaipairingsurface}).
\end{proof}

\begin{lemma} \label{lem-decomposition-dim2} (The local Bayer--Macr\`i decomposition in dimension $2$.)
\begin{enumerate}
\item[(a)]If $\ch_0\neq 0$ and $\Im Z(\ch)>0$, then there is a decomposition (up to a positive scalar)
\begin{eqnarray}
w_{\omega,\beta}(\ch) &\xlongequal{\RR_+}&
\mu_{\sigma}(\ch)\mathbf{m}(\omega,\ch)+\mathbf{m}(\beta,\ch)+\mathbf{w}(\ch)\label{decomposition-w1}\\
&=& \mu_{\sigma}(\ch)\mathbf{m}(\omega,\ch)+\mathbf{m}(\alpha,\ch)+\mathbf{u}(\ch),\label{decomposition-w1-alpha}
\end{eqnarray}
where $\mathbf{m}(\omega,\ch)$, $\mathbf{m}(\beta,\ch)$, $\mathbf{m}(\alpha,\ch)$, $\mathbf{w}(\ch)$, $\mathbf{u}(\ch)\in \mathbf{v}^\perp$.
\item[(b)] Assume there is a flat family $\mathcal{E}$. 
Then the Bayer--Macr\`i line bundle class $\ell_{\sigma_{\omega,\beta}}$ has a decomposition in $N^1(M_\sigma(\ch))$:
\begin{equation}\label{decomposition-line-bundle1}
\ell_{\sigma_{\omega,\beta}}\xlongequal{\RR_+}\mu_{\sigma}(\ch)\theta_{\sigma,\mathcal{E}}(\mathbf{m}(\omega,\ch))+
\theta_{\sigma,\mathcal{E}}(\mathbf{m}(\beta,\ch))+
\theta_{\sigma,\mathcal{E}}(\mathbf{w}(\ch)).
\end{equation}
\end{enumerate}
\end{lemma}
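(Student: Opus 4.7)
The plan is to reduce Part (a) to Lemma~\ref{lem-w-sigma} together with the defining identity $\mu_\sigma(\ch)\Im Z(\ch) = -\Re Z(\ch)$. Starting from the concrete three-term expansion of $w_\sigma \xlongequal{\RR_+} \mu_\sigma(\ch)\Im \mho_Z + \Re \mho_Z$ displayed in \eqref{eq-w_sigma-decomposition}, I would compare termwise with the proposed sum $\mu_{\sigma}(\ch)\mathbf{m}(\omega,\ch) + \mathbf{m}(\beta,\ch) + \mathbf{w}(\ch)$. The degree-$0$ slots are both $1$, and the degree-$1$ slots both equal $\mu_\sigma(\ch)\omega + \beta - \frac{3}{4}\KS$, so only the degree-$2$ slots need to be matched.

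After cancelling the common terms $-\frac{3}{4}\KS.(\mu_\sigma(\ch)\omega + \beta)$, $-\frac{1}{2}\chi(\OO_S)$ and $\frac{11}{32}\KS^2$, the matching of the degree-$2$ slots reduces to
\begin{equation*}
\mu_\sigma(\ch)\Bigl(\tfrac{\ch_1}{\ch_0}.\omega - \beta.\omega\Bigr) = \tfrac{1}{2}\beta^2 - \tfrac{1}{2}\omega^2 - \tfrac{\ch_1}{\ch_0}.\beta + \tfrac{\ch_2}{\ch_0}.
\end{equation*}
Multiplying through by $\ch_0$ and invoking the explicit formulas for $\Re Z_{\omega,\beta}(\ch)$ and $\Im Z_{\omega,\beta}(\ch)$ obtained from $Z_{\omega,\beta}(E) = -\int_S e^{-(\beta + i\omega)}\ch(E)$, this identity becomes $\mu_\sigma(\ch)\Im Z(\ch) = -\Re Z(\ch)$, which is the very definition of $\mu_\sigma(\ch)$ (valid since $\Im Z(\ch) > 0$ is assumed). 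The perpendicularity assertions $\mathbf{m}(\omega,\ch), \mathbf{m}(\beta,\ch), \mathbf{w}(\ch), \mathbf{u}(\ch) \in \mathbf{v}^\perp$ are then supplied for free by Lemma~\ref{lem-perp-v}.

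For the second form \eqref{decomposition-w1-alpha}, the strategy is to use the relation $\mathbf{w}(\ch) = \mathbf{u}(\ch) - \mathbf{m}\bigl(\tfrac{1}{2}\KS,\ch\bigr)$ built into the definition of $\mathbf{u}(\ch)$, together with the evident $\RR$-linearity of $\mathbf{m}(\cdot,\ch)$ in its first argument. Setting $\alpha := \beta - \tfrac{1}{2}\KS$ and collecting terms converts the first decomposition into the second, and $\mathbf{m}(\alpha,\ch) \in \mathbf{v}^\perp$ again by Lemma~\ref{lem-perp-v}.

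Part (b) is essentially a formal corollary: every summand in \eqref{decomposition-w1} lies in $\mathbf{v}^\perp$, so applying the algebraic Mukai morphism $\theta_{\sigma,\mathcal{E}}\colon \mathbf{v}^\perp \to N^1(M_\sigma(\ch))$ to both sides of \eqref{decomposition-w1} and invoking Lemma~\ref{lem-w-sigma-1}, $\ell_{\sigma_{\omega,\beta}} \xlongequal{\RR_+} \theta_{\sigma,\mathcal{E}}(w_{\omega,\beta})$, yields \eqref{decomposition-line-bundle1}. There is no genuine obstacle here beyond bookkeeping; the entire argument is a reorganization of Lemma~\ref{lem-w-sigma} in which the nontrivial content is absorbed into the single identity $\mu_\sigma(\ch)\Im Z(\ch) = -\Re Z(\ch)$.
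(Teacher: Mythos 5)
Your proposal is correct and follows essentially the same route as the paper: reduce to the expansion of $w_\sigma$ in Lemma~\ref{lem-w-sigma}, match the degree-$2$ components via the identity $\mu_\sigma(\ch)\,\Im Z(\ch)=-\Re Z(\ch)$ (the paper writes this as the explicit slope formula and verifies the same equation~(\ref{eq-check1})), deduce~(\ref{decomposition-w1-alpha}) from $\alpha=\beta-\tfrac12\KS$, and obtain Part~(b) by applying $\theta_{\sigma,\mathcal{E}}$, with perpendicularity supplied by Lemma~\ref{lem-perp-v}.
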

\begin{proof}
Recall (\ref{eq-w_sigma-decomposition}). To show (\ref{decomposition-w1}), we only need to check that
\begin{equation}\label{eq-check1}
\frac{\ch_1}{\ch_0}.(\mu_\sigma(\ch) \omega+\beta)-\frac{\ch_2}{\ch_0}=\beta.(\mu_\sigma(\ch)\omega+\beta)-\frac{1}{2}(\omega^2+\beta^2).
\end{equation}
By the definition of the Bridgeland slope, we have
\begin{equation*}
\mu_{\sigma}(\ch)=\frac{\ch_2-\frac{1}{2}\ch_0 \left({\omega}^2-\beta^2\right)
-\ch_1.\beta}{\omega.\left(\ch_1-\ch_0\beta \right)}.
\end{equation*}
So
\begin{equation*}
\mu_{\sigma}(\ch)\omega.\left(\frac{\ch_1}{\ch_0}-\beta \right)=
\frac{\ch_2}{\ch_0}-\frac{1}{2}\left({\omega}^2-\beta^2\right)
-\frac{\ch_1}{\ch_0}.\beta.
\end{equation*}
Therefore, we have (\ref{eq-check1}). Then (\ref{decomposition-w1-alpha}) follows from (\ref{decomposition-w1}) and the relation $\alpha=\beta-\frac{1}{2}\KS$.
Part (b) follows directly by applying the Mukai morphism $\theta_{\sigma,\mathcal{E}}$.
\end{proof}

\begin{remark} \label{remark-BHL+}
An equivalent decomposition of $w_{\omega,\beta}(\ch)$ (\ref{decomposition-w1}) is also obtained by Bolognese, Huizenga, Lin, Riedl, Schmidt, Woolf, and Zhao \cite[Proposition 3.8.]{BHL+16}.
\end{remark}

\subsection{The global Bayer--Macr\`i decomposition} 
Assume the existence of the global Bayer--Macr\`i map. Assume that $W(\ch,\ch')$ is an actual Bridgeland wall. 
\subsubsection{Supported in dimension 1}
\begin{theorem} \label{thm-geometric-meaning-decomposition-dim1}  Fix a frame $(H,\gamma,u)$ and assume $\ch=(0,\ch_1,\ch_2)$ with $\ch_1.H>0$. Assume the existence of the global Bayer--Macr\`i map, with the fixed base stability condition in the Simpson chamber $\mathtt{SC}$, that is, $M_{\sigma\in\mathtt{SC}}(\ch)\cong M_{(\alpha, \omega)}(\ch)$. Then there is a correspondence from Bridgeland wall $W(\ch,\ch')$ as semicircle (\ref{eq-wall-equation}) in the half-plane $\Pi_{(H,\gamma,u)}$ with fixed center $C$  (or equivalently, as semiline (\ref{eq-wall-parallel-lines}) in the plane $\Sigma_{(H,\gamma,u)}$ with fixed slope $C$) to the Mori wall inside pseudoeffective cone $\overline{\mathrm{Eff}}(M_{(\alpha, \omega))}(\ch)$:
\begin{eqnarray}
\ell_{\sigma\in W(\ch,\ch')}&\xlongequal{\RR_+}&\left(\frac{g}{2}D(\ch,\ch')+\frac{d}{2}u^2\right)\mathcal{S}-\mathcal{T}_{(H,\gamma,u)}(\ch)\label{eq-line-bundle-dim1-decomposition}\\
&=& \frac{\chi-gCc_1+udc_2}{r}\mathcal{S}-\mathcal{T}_{(H,\gamma,u)}(\ch).\label{eq-w-sigma-1dim-simplify-line-bundle}
\end{eqnarray}
\end{theorem}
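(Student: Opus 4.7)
The plan is to assemble the statement by combining the purely local decomposition already given by Lemma~\ref{lem-decomposition-dim1}(b) with the global Néron-Severi identification supplied by the hypothesis that a global Bayer-Macrì map exists. The content is mostly bookkeeping on top of Lemma~\ref{lem-decomposition-dim1} and equation (\ref{eq-w-sigma-1dim-simplify}); the conceptual point is interpreting the class as living in a single, fixed $N^1(M_{(\alpha,\omega)}(\ch))$ rather than drifting with the chamber.

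First I would fix notation by invoking the hypothesis that the base of the global Bayer-Macrì map is chosen inside the Simpson chamber $\mathtt{SC}$, so $M_{\sigma\in\mathtt{SC}}(\ch)$ is identified with the classical Simpson moduli space $M_{(\alpha,\omega)}(\ch)$. The global Bayer-Macrì map of Definition~\ref{def-global-BM-map} then provides, for every chamber $\mathtt{C}'$ reached from $\mathtt{SC}$ by crossing Bridgeland walls, a canonical isomorphism $N^1(M_{\mathtt{C}'}(\ch))\cong N^1(M_{(\alpha,\omega)}(\ch))$ induced by a derived autoequivalence that agrees with the identity outside codimension two. Under this identification, the local classes $\theta_{\mathtt{C}',\mathcal{E}}(w)$ for $w\in\mathbf{v}^\perp$ assemble into a single well-defined class on $M_{(\alpha,\omega)}(\ch)$, and in particular $\mathcal{S}$ and $\mathcal{T}_{(H,\gamma,u)}(\ch)$ defined in (\ref{eq-S-T}) are globally defined elements of $N^1(M_{(\alpha,\omega)}(\ch))$.

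Next I would apply Lemma~\ref{lem-decomposition-dim1}(b) directly at any $\sigma\in W(\ch,\ch')$ to obtain formula (\ref{eq-line-bundle-dim1-decomposition}). The key structural observation, which is special to the dimension-one case, is that by Theorem~\ref{thm-Maciocia} the center $C=\frac{z+duy_2}{gy_1}$ depends only on $\ch$ and the frame, not on $\ch'$; therefore $\mathbf{t}_{(H,\gamma,u)}(\ch)$, and hence $\mathcal{T}_{(H,\gamma,u)}(\ch)$, is a single fixed class, and all $\ch'$-dependence is concentrated in the scalar coefficient of $\mathcal{S}$. Substituting the simplification (\ref{eq-w-sigma-1dim-simplify}), which is itself a direct consequence of (\ref{eq-D-C-r}), yields (\ref{eq-w-sigma-1dim-simplify-line-bundle}). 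The equality $\xlongequal{\RR_+}$ is preserved throughout because each step only rescales by a positive constant (for instance by the positive number $\Im Z(\ch)$ in Lemma~\ref{lem-w-sigma}).

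The final clause asserts that this produces a genuine Mori wall; this I would obtain from Theorem~\ref{thm-Bayer-Macri}. On an actual Bridgeland wall $W(\ch,\ch')$ there exist distinct $S$-equivalent semistable objects, so the $1$-cycles swept out by non-trivial Jordan-Hölder families have intersection zero with $\ell_\sigma$; thus $\ell_\sigma$ lies on a proper face of $\nef(M_{(\alpha,\omega)}(\ch))$, which is exactly a wall of the stable base locus decomposition of $\overline{\mathrm{Eff}}(M_{(\alpha,\omega)}(\ch))$. Varying $\ch'$ among potential destabilizers sharing the same wall produces the same face, and varying the wall varies the coefficient $\tfrac{\chi - gCc_1 + udc_2}{r}$ monotonically, giving the desired correspondence. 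The principal obstacle I anticipate is not algebraic but foundational: one has to be sure that the gluing in Definition~\ref{def-global-BM-map} genuinely transports the local class $\theta_{\sigma,\mathcal{E}}(w_{\omega,\beta})$ to the expression stated in (\ref{eq-line-bundle-dim1-decomposition}), i.e.\ that the birational identifications induced by the derived autoequivalences across walls are compatible with the Mukai morphism on $\mathbf{v}^\perp$; this is precisely the content encapsulated in the existence assumption for the global Bayer-Macrì map, and once granted (as in Theorem~\ref{theorem-global-BM} for K3 or $\mathbb{P}^2$), the theorem reduces to the formal manipulation above.
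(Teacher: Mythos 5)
Your proposal is correct and takes essentially the same route as the paper: both fix $\mathcal{S}$ and $\mathcal{T}_{(H,\gamma,u)}(\ch)$ in $N^1(M_{(\alpha,\omega)}(\ch))$ via the global Bayer--Macr\`i identification based in the Simpson chamber, apply Lemma~\ref{lem-decomposition-dim1}(b) on the wall, and use the independence of $C$ (hence of $\mathbf{t}_{(H,\gamma,u)}(\ch)$) from $\ch'$ together with equation~(\ref{eq-w-sigma-1dim-simplify}) to obtain both displayed formulas. Your additional remarks on the Mori-wall interpretation via Theorem~\ref{thm-Bayer-Macri} go slightly beyond what the paper writes in its proof but are consistent with it.
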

\begin{proof}
The center $C=\frac{z+duy_2}{gy_1}$ is independent of $\ch'$. So is $\mathcal{T}_{(H,\gamma,u)}(\ch)$. The number $D(\ch,\ch')$ is given by (\ref{eq-D-C-r-F}). Since the base stability condition $\sigma\in \mathtt{SC}$, we obtain 
$$\mathcal{S}=\theta_{\mathtt{SC},\mathcal{E}}((0,0,-1))\in N^1(M_{(\alpha,\omega)}(\ch)),$$
$$\mathcal{T}_{(H,\gamma,u)}(\ch)=-\theta_{\mathtt{SC},\mathcal{E}}(\mathbf{t}_{(H,\gamma,u)}(\ch))\in N^1(M_{(\alpha,\omega)}(\ch)).$$ Then (\ref{eq-line-bundle-dim1-decomposition}) follows from (\ref{eq-line-bundle-dim1}) by fixing the above two line bundles $\mathcal{S}$ and $\mathcal{T}_{(H,\gamma,u)}(\ch)$ in the Simpson moduli space. We obtain (\ref{eq-w-sigma-1dim-simplify-line-bundle}) by using (\ref{eq-w-sigma-1dim-simplify}).
\end{proof}

By using the Donaldson morphism $\lambda_\mathcal{E}$, we have \cite[Example 8.1.3]{HL10} 
$$\mathcal{S}=\lambda_\mathcal{E}(0,0,1)=(p_M)_*(\mathrm{det}(\mathcal{E})|_{M\times \{s\}}).$$
The line bundle $\mathcal{S}$ is conjectured to induce the support morphism, which maps $E\in M_{\sigma\in\mathtt{SC}}(\ch)$ to $\Supp(E)$. This is proved for the case in which $S=\mathbb{P}^2$ (see \cite{Woo13}) or $S$ is a K3 surface (see \cite[Lemma 11.3]{BM14b}).

\subsubsection{Supported in dimension $2$} By applying the derived dual functor if necessary, we further assume $\ch_0>0$ in Lemma~\ref{lem-decomposition-dim2}.
Recall Lemma~\ref{lem-duality-line-bundle} and Corollary \ref{cor-derived-dual-stability}. We obtain
\begin{eqnarray*}
w_{\omega,-\beta}(-\ch^*) &\xlongequal{\RR_+}&
\mu_{\Phi(\sigma)}(-\ch^*)\mathbf{m}(\omega,-\ch^*)+\mathbf{m}(-\beta,\ch)+\mathbf{w}(-\ch^*),\label{decomposition-w1-dual}\\
\ell_\sigma\cong\ell_{\Phi(\sigma)}&\xlongequal{\RR_+}&
\mu_{\Phi(\sigma)}(-\ch^*)\theta_{\Phi(\sigma),\mathcal{F}}(\mathbf{m}(\omega,-\ch^*))\\
& &+\theta_{\Phi(\sigma),\mathcal{F}}(\mathbf{m}(-\beta,-\ch^*))+
\theta_{\Phi(\sigma),\mathcal{F}}(\mathbf{w}(-\ch^*)).
\end{eqnarray*}
Since $\mu_{\Phi(\sigma)}(-\ch^*)=-\mu_{\sigma}(\ch)$, we get 
$$\theta_{\Phi(\sigma),\mathcal{F}}(\mathbf{w}(-\ch^*))\cong\theta_{\sigma,\mathcal{E}}(\mathbf{w}(\ch)),\quad \theta_{\Phi(\sigma),\mathcal{F}}(\mathbf{m}(\omega,-\ch^*))\cong -\theta_{\sigma,\mathcal{E}}(\mathbf{m}(\omega,\ch)).$$

\begin{notation}\label{notation-supp-dim2} Assume $\ch_0>0$. Let $L$ be a line bundle on $S$. Denote
$$\widetilde{L}:=\theta_{\Phi(\sigma),\mathcal{F}}(\mathbf{m}(L,-\ch^*))\cong -\theta_{\sigma,\mathcal{E}}(\mathbf{m}(L,\ch)),\quad
\mathcal{B}_0:=-\theta_{\sigma,\mathcal{E}}(\mathbf{u}(\ch)).$$
Then we have
\begin{equation}\label{eq-B0}
\theta_{\sigma,\mathcal{E}}(\mathbf{w}(\ch))=\frac{1}{2}\widetilde{\KS}-\mathcal{B}_0.
\end{equation}
Recall $\alpha=\beta-\frac{1}{2}\KS$. Denote 
\begin{equation*}
\mathcal{B}_{\alpha}:=\widetilde{\beta}-\theta_{\Phi(\sigma),\mathcal{F}}(\mathbf{w}(-\ch^*))
\cong \widetilde{\beta}-\theta_{\sigma,\mathcal{E}}(\mathbf{w}(\ch))=\widetilde{\alpha}+\mathcal{B}_0.
\end{equation*}
Note that $\widetilde{L}$, $\mathcal{B}_{\alpha}$, and $\mathcal{B}_{0}$ are line bundles on $M_{\sigma}(\ch)$.
\end{notation}
\begin{assumption}\label{assumption-ch}
The Chern character $\ch=(\ch_0,\ch_1,\ch_2)$ satisfies \emph{condition} $\mathrm{(C)}$ if the following three assumptions hold:
\begin{itemize} 
\item $\ch_0> 0$;
\item (Bogomolov type) $\ch_1^2-2\ch_0\ch_2\geq 0$; and
\item $\mathrm{gcd}(\ch_0,\ch_1.H, \ch_2-\frac{1}{2}\ch_1.\KS)=1$ for a fixed ample line bundle $H$ (see \cite[Corollary 4.6.7]{HL10}).
\end{itemize}
\end{assumption}
\begin{theorem}\label{thm-geometric-meaning-decomposition-dim2}
Fix $\ch$, and assume it satisfies condition $\mathrm{(C)}$. Assume the existence of the global Bayer--Macr\`i map, with the fixed base stability condition in the Gieseker chamber $\mathtt{GC}$, that is, $M_{\sigma\in\mathtt{GC}}(\ch)\cong M_{(\alpha, \omega)}(\ch)$. Then the following conclusions hold.
\begin{enumerate}
\item[(a)] There is a Bayer--Macr\`i decomposition for the line bundle $\ell_{\sigma_{\omega,\beta}}$:
\begin{equation*}\label{eq-geometric-meaning-decomposition}
\ell_{\sigma_{\omega,\beta}}\xlongequal{\RR_+} (-\mu_{\sigma_{\omega,\beta} }(\ch))\widetilde{\omega}-\mathcal{B}_{\alpha}=(-\mu_{\sigma_{\omega,\beta}}(\ch))\widetilde{\omega}-\widetilde{\alpha}-\mathcal{B}_0.
\end{equation*}
\item[(b)] The line bundle $\widetilde{\omega}$ induces the Gieseker--Uhlenbeck morphism from the $(\alpha,\omega)$-Gieseker semistable moduli space $M_{(\alpha,\omega)}(\ch)$ to the Uhlenbeck space $U_\omega (\ch)$. 
\item[(c)] If $\ch_0=2$ and $\partial M_{(\alpha,\omega)}(\ch)\neq\emptyset$, the divisor $\mathcal{B}_{\alpha}$ is the $\alpha$-twisted boundary divisor of the induced Gieseker--Uhlenbeck morphism. In particular, in the case of $\alpha=0$, the divisor $\mathcal{B}_0$ is the (untwisted) boundary divisor from the $\omega$-semistable Gieseker moduli space $M_\omega(\ch)$ to the Uhlenbeck space $U_\omega (\ch)$. 
\item[(d)] Fix a frame $(H,\gamma,u)$. Then there is a correspondence from the Bridgeland wall $W(\ch,\ch')$ as semicircle (\ref{eq-wall-equation}) in the half-plane $\Pi_{(H,\gamma,u)}$ with center $C(\ch,\ch')$  (or equivalently, as semiline (\ref{eq-wall-passing-fixed-point}) in the plane $\Sigma_{(H,\gamma,u)}$ with slope $C(\ch,\ch')$) to the effective line bundle on the moduli space $M_{(\alpha, \omega)}(\ch)$:
\begin{equation} \label{eq-center-linebundle}
\ell_{\sigma\in W(\ch,\ch')}\xlongequal{\RR_+}
-C(\ch,\ch')\widetilde{H}-u\widetilde{\gamma}+\frac{1}{2}\widetilde{\KS}-\mathcal{B}_0.
\end{equation}
\end{enumerate}
\end{theorem}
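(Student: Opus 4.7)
The plan is to derive parts (a) and (d) by direct algebraic manipulation from material already proved in the excerpt, and then to identify the relevant line bundles geometrically in parts (b) and (c) by invoking the classical Donaldson-morphism theory of the Gieseker--Uhlenbeck map.

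For part (a), I start from Lemma~\ref{lem-decomposition-dim2}(b):
\[
\ell_{\sigma_{\omega,\beta}}\xlongequal{\RR_+}\mu_{\sigma}(\ch)\,\theta_{\sigma,\mathcal{E}}(\mathbf{m}(\omega,\ch))+\theta_{\sigma,\mathcal{E}}(\mathbf{m}(\beta,\ch))+\theta_{\sigma,\mathcal{E}}(\mathbf{w}(\ch)).
\]
The map $L\mapsto \mathbf{m}(L,\ch)$ is $\RR$-linear in $L$, so the Mukai-morphism images combine via the notation $\widetilde{L}=-\theta_{\sigma,\mathcal{E}}(\mathbf{m}(L,\ch))$ introduced in Notation~\ref{notation-supp-dim2}, turning the first two summands into $-\mu_{\sigma}(\ch)\widetilde{\omega}-\widetilde{\beta}$. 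The third summand is rewritten using equation~(\ref{eq-B0}) as $\tfrac{1}{2}\widetilde{\KS}-\mathcal{B}_0$. Combining these and using $\alpha=\beta-\tfrac{1}{2}\KS$ (so $-\widetilde{\beta}+\tfrac{1}{2}\widetilde{\KS}=-\widetilde{\alpha}$) together with $\mathcal{B}_{\alpha}=\widetilde{\alpha}+\mathcal{B}_0$ gives the two equivalent expressions in (a).

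For part (d), I substitute equation~(\ref{eq-relation1}) of Lemma~\ref{lem-direct-relations-using-centers}, which says that on the wall $W(\ch,\ch')$ one has the class identity $\mu_{\sigma}(\ch)\,\omega+\beta=C(\ch,\ch')\,H+u\gamma$ in $N^1(S)$. Applying the linear assignment $L\mapsto\widetilde{L}$ to this identity yields
\[
-\mu_{\sigma}(\ch)\,\widetilde{\omega}-\widetilde{\beta}=-C(\ch,\ch')\,\widetilde{H}-u\,\widetilde{\gamma},
\]
and inserting this into the formula of (a) gives equation~(\ref{eq-center-linebundle}). Note the $\widetilde{\beta}$ cancels, which is what makes the right-hand side of~(\ref{eq-center-linebundle}) depend on $\ch'$ only through the single parameter $C(\ch,\ch')$; this is precisely the content of the correspondence ``Bridgeland walls $\leftrightarrow$ Mori walls.''

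Parts (b) and (c) are the geometric identifications where the real work lies. The line bundle $\widetilde{\omega}=-\theta_{\sigma,\mathcal{E}}(\mathbf{m}(\omega,\ch))$ is, up to a positive scalar, the image of the standard Donaldson-type class $\lambda_{\mathcal{E}}\bigl(\ch_0\,\omega - (\omega\cdot\ch_1)[\mathrm{pt}]\bigr)$ on the Gieseker moduli space; this is precisely the class known (Le~Potier, J.~Li, Huybrechts--Lehn) to be semiample and to contract the boundary of $M_{(\alpha,\omega)}(\ch)$ onto the Uhlenbeck space $U_\omega(\ch)$, giving~(b). For (c), under Assumption~\ref{assumption-ch} and $\ch_0=2$ one uses the standard description of the rank-two boundary from \cite{HL10} to identify $\mathcal{B}_0$ (in the untwisted case) with the boundary divisor; the $\alpha$-twisted version $\mathcal{B}_\alpha=\widetilde{\alpha}+\mathcal{B}_0$ then follows from the same statement applied to $\alpha$-twisted Gieseker stability. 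The main obstacle is the geometric identification in (c): matching the \emph{intrinsic} Bayer--Macrì combination $\mathbf{w}(\ch)+\mathbf{m}(\tfrac12\KS,\ch)$ with the classical $\alpha$-twisted boundary requires a careful Chern-character bookkeeping in rank two, which is where Assumption~\ref{assumption-ch} (Bogomolov inequality plus the gcd condition ensuring existence of a universal family) is used in an essential way.
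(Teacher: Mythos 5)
Your proposal is correct and follows essentially the same route as the paper: parts (a) and (d) by applying the Mukai morphism to the decomposition of Lemma~\ref{lem-decomposition-dim2}, using the linearity of $L\mapsto\widetilde{L}$, equation~(\ref{eq-B0}), the relation $\alpha=\beta-\tfrac{1}{2}\KS$, and the wall identity~(\ref{eq-relation1}) of Lemma~\ref{lem-direct-relations-using-centers}; parts (b) and (c) by identifying $\widetilde{\omega}$ (equivalently the Le~Potier/Donaldson determinant class, cf.\ $\mathcal{L}_1=\ch_0\widetilde{H}$) with the semiample class inducing the Gieseker--Uhlenbeck morphism and invoking \cite[Theorem 8.2.8, Lemma 9.2.1]{HL10} for the boundary divisor in rank two, exactly as the paper does.
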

\begin{proof}
We identify $\widetilde{\omega}$ and $\mathcal{B}_{\alpha}$ as line bundles on $M_{(\alpha, \omega)}(\ch)$. Then (\ref{decomposition-line-bundle1}) implies part (a). Since $-\mu_{\sigma\in \mathtt{UW}}(\ch)=\mu_{\Phi(\sigma)\in\mathtt{DUW}}(-\ch^*)=+\infty$, we obtain
$$ \ell_{\sigma\in\mathtt{UW}}\xlongequal{\RR_+} \widetilde{\omega}.
$$
Part (b) follows from  \cite[Theorem 8.2.8]{HL10}. 
If $\ch_0=2$ and $\partial M_{(\alpha,\omega)}(\ch)\neq\emptyset$, then the Gieseker--Uhlenbeck morphism is a divisorial contraction by \cite[Lemma 9.2.1]{HL10}, and $\mathcal{B}_\alpha$ is the boundary divisor from the $\alpha$-twisted moduli space $M_{(\alpha,\omega)}(\ch)$ to the Uhlenbeck space $U_\omega(\ch)$. In particular, $\mathcal{B}_0$ is the untwisted boundary divisor. This shows part (c). Then (\ref{eq-center-linebundle}) follows from a direct computation by by using (\ref{decomposition-line-bundle1}), (\ref{lem-direct-relations-using-centers}), and (\ref{eq-B0}):
\begin{eqnarray*}
\ell_{\sigma\in W(\ch,\ch')}&\xlongequal{\RR_+}&\theta_{\sigma,\mathcal{E}}(\mathbf{m}(\mu_{\sigma}(\ch)\omega+\beta,\ch))+
\theta_{\sigma,\mathcal{E}}(\mathbf{w}(\ch))\\
&=&-C(\ch,\ch')\widetilde{H}-u\widetilde{\gamma}+\frac{1}{2}\widetilde{\KS}-\mathcal{B}_0. 
\end{eqnarray*}
\end{proof}

There are two line bundles $\mathcal{L}_0$, $\mathcal{L}_1$ on Gieseker moduli space introduced by Le Potier. We follow the notation from \cite[Definition 8.1.9.]{HL10}.
Then \begin{equation*}\label{eq-Potier-BM}
\mathcal{L}_0=-\ch_0 \mathcal{B}_0,\quad \mathcal{L}_1=\ch_0 \widetilde{H}.
\end{equation*} 

Arcara, Bertram, Coskun, and Huizenga \cite{ABCH13} studied the Hilbert scheme of $n$-points on the projective plane $\mathbb{P}^2$ and gave a precise conjecture between the Bridgeland walls and Mori walls, which was one of the motivations of Bayer--Macr\`i's line bundle theory. This conjecture was proved by Li and Zhao \cite[Theorem 1.2]{LZ18}. The relation still holds for a more general primitive character (see \cite{LZ16}).   
\begin{corollary}[\cite{LZ16}]\label{cor-ABCH}
Let $S=\mathbb{P}^2$, and denote by $H$ the hyperplane divisor on $\mathbb{P}^2$. Fix $\ch$ primitive with $\ch_0>0$. Assume $M_{H}(\ch)\neq \emptyset$. Then there is a relation
\begin{equation*}
\ell_{\sigma\in W(\ch,\ch')}\xlongequal{\RR_+}
-\left(C(\ch,\ch')+\frac{3}{2}\right)\widetilde{H}-\mathcal{B}_0.
\end{equation*}
\end{corollary}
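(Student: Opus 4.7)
The plan is to derive the corollary as the direct specialization of Theorem~\ref{thm-geometric-meaning-decomposition-dim2}(d) to the surface $S = \mathbb{P}^2$. The $\mathbb{P}^2$-specific data that drive the computation are: Picard rank one gives $N^1(\mathbb{P}^2) = \RR\cdot H$ with $H^{\perp} = 0$ inside $\NS(\mathbb{P}^2)_{\RR}$; the canonical class is $\KS = -3H$; and $g = H^2 = 1$.

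First I will check that the hypotheses of Theorem~\ref{thm-geometric-meaning-decomposition-dim2} are satisfied in this setting. The existence of a global Bayer-Macr\`i map on $\Stab^{\dag}(\mathbb{P}^2)$ for primitive $\ch$ is supplied by Theorem~\ref{theorem-global-BM}, due to Li-Zhao. Condition (C) of Assumption~\ref{assumption-ch} holds: $\ch_0 > 0$ is assumed; the Bogomolov inequality follows from the nonemptiness of $M_H(\ch)$ via semistability; and the $\gcd$ condition is a consequence of primitivity of $\ch$. Thus part (d) of Theorem~\ref{thm-geometric-meaning-decomposition-dim2} applies, with the base identification $M_{\sigma \in \mathtt{GC}}(\ch) \cong M_H(\ch)$.

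Next, since $H^{\perp} = 0$ forces $\gamma = 0$, I will work in the frame $(H, 0, 0)$; once $\gamma = 0$ the parameter $u$ drops out of the stability condition, so we harmlessly set $u = 0$. Substituting $\gamma = 0$, $u = 0$, and $\KS = -3H$ into the wall-to-divisor formula~(\ref{eq-center-linebundle}), and using the $\RR$-linearity of $L \mapsto \widetilde{L}$ manifest from the linearity of $\mathbf{m}(L,\ch)$ in $L$ (Notation~\ref{notation-supp-dim2}), I obtain
\begin{equation*}
\ell_{\sigma \in W(\ch, \ch')} \xlongequal{\RR_+} -C(\ch, \ch')\widetilde{H} + \tfrac{1}{2}\widetilde{\KS} - \mathcal{B}_0 = -C(\ch,\ch')\widetilde{H} - \tfrac{3}{2}\widetilde{H} - \mathcal{B}_0 = -\!\left(C(\ch,\ch') + \tfrac{3}{2}\right)\widetilde{H} - \mathcal{B}_0,
\end{equation*}
which is the claimed formula. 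Because the corollary is essentially a Picard-rank-one rewriting of Theorem~\ref{thm-geometric-meaning-decomposition-dim2}(d), no genuine obstacle is expected; the only subtlety worth emphasizing is that the collapse $\gamma = 0$, $u = 0$ on $\mathbb{P}^2$ leaves behind the numerical shift $\tfrac{1}{2}\widetilde{\KS} = -\tfrac{3}{2}\widetilde{H}$, which produces the $+\tfrac{3}{2}$ in the final formula.
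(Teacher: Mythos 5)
Your proposal is correct and follows essentially the same route as the paper: invoke the Li--Zhao existence of the global Bayer-Macr\`i map (Theorem~\ref{theorem-global-BM}) and then specialize equation~(\ref{eq-center-linebundle}) of Theorem~\ref{thm-geometric-meaning-decomposition-dim2}(d) with $\gamma=0$ and $\KS=-3H$, so that $\frac{1}{2}\widetilde{\KS}=-\frac{3}{2}\widetilde{H}$ yields the stated formula. Your additional verification of condition $\mathrm{(C)}$ (Bogomolov from nonemptiness of $M_H(\ch)$, the gcd condition from primitivity) is a harmless elaboration of what the paper leaves implicit.
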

\begin{proof}
The existence of the global Bayer--Macr\`i map was proved by Li and Zhao \cite[Theorem 0.2]{LZ16}. We then apply (\ref{eq-center-linebundle}) with $\gamma=0$ and $\KS=-3H$.
\end{proof}

\begin{example} \label{boundary-divisor-Hilbert-schemes}
Assume that the irregularity of the surface is $0$. If $\ch=(1,0,-n)$, then the Gieseker--Uhlenbeck morphism is the Hilbert--Chow morphism $h: S^{[n]}\to S^{(n)}$, which maps the Hilbert scheme of $n$-points on $S$ to the symmetric product $S^{(n)}$. In particular, 
$$\widetilde{H}=\mathcal{L}_1= h^*(\OO_{S^{(n)}}(1)),$$
which induces the Hilbert--Chow morphism (see \cite[Example 8.2.9]{HL10}). The boundary divisor of the Hilbert--Chow morphism is
\begin{equation} \label{eq-HC-Boundary-divisor}
\mathrm{B}:=\{\xi \in S^{[n]}: |\Supp(\xi)|<n\}.
\end{equation}
It is known from \cite[Appendix]{BSG91} that $\frac{1}{2}\mathrm{B}$ is an integral divisor and $\mathcal{B}_0=-\mathcal{L}_0=\frac{1}{2}\mathrm{B}$.
\end{example}

\section{A toy model: fibered surface over $\mathbb{P}^1$ with a global section}\label{section-toy-model} We compute the nef cone of the Hilbert scheme $S^{[n]}$ of $n$-points over $S$ by using Theorem $\ref{thm-geometric-meaning-decomposition-dim2}$. Here let $\pi:S\to \mathbb{P}^1$ be either a $\mathbb{P}^1$-fibered or an elliptic-fibered surface over $\mathbb{P}^1$ with a global section $E$ whose self-intersection number is $-e$. We \emph{assume} that  all fibers are reduced and irreducible, and the Picard group of $S$ is generated by $E$ and $F$, where $F$ is the generic fiber class. We have the intersection numbers: 
$$E.E=-e, \quad E.F=1, \quad F^2=0.$$

\begin{itemize}
\item $\mathbb{P}^1$ fibration. In this case, $F\cong \mathbb{P}^1$ and $S$ is the \emph{Hirzebruch surface} $\Sigma_e$ with integer $e\geq 0$. Then $\KS=-2(E+eF)+(e-2)F$. Here $\Sigma_0$ is the surface $\mathbb{P}^1\times\mathbb{P}^1$.
\item Elliptic fibration. In this case, the generic fiber $F$ is an elliptic curve. We denote the surface by $S_e$ and further \emph{assume} that $e\geq 2$. Then $S_e$ has the unique section $E$ and $\KS=(e-2)F$ (see \cite{Mir89}). 
\end{itemize}

Since the nef cone of $S$ is generated by the two extremal nef line bundles $E+eF$ and $F$, any ample line bundle $H$, after rescaling, can be written as
$$H:=\lambda(E+eF)+(1-\lambda)F, \quad 0<\lambda<1.$$
Take $\gamma$ such that $H.\gamma=0$ and $H^2=-\gamma^2$. Basic computation shows that $\gamma=\pm \left(-\lambda(E+eF)+(1-\lambda+e\lambda)F\right)$. An $(H,\gamma,u)$-frame, with $u\geq 0$, is fixed by the choice
$$\gamma:=-\lambda(E+eF)+(1-\lambda+e\lambda)F.$$
The two numbers $\lambda$ and $u$ are regarded as the \emph{initial values}. 

Fix $\ch=(1,0,-n)$, with integer $n\geq 2$. The potential walls are given by $(s-C)^2+t^2=C^2+D$ with $t>0$,  where 
\begin{equation*}
C=C(\ch,\ch')=\frac{\ch_2'+\ch_0' n-u\ \ch_1'.\gamma}{\ch_1'.H},\quad D=D(\ch,\ch')=-u^2-\frac{2n}{H^2}.
\end{equation*}
Recall $s_0:=\frac{\ch_1.H}{\ch_0 H^2}=0$. The $\mathtt{UW}$ is given by $s=s_0=0$. Therefore, $C<0$. 

One type of nef line bundle on $S^{[n]}$ is $\widetilde{\omega}$ for $\omega\in\amp(S)$, which induces a Gieseker--Uhlenbeck morphism. By taking $\omega$ to be extremal, that is, $\omega=E+eF$ or $F$, we obtain two extremal nef line bundles  on $S^{[n]}$:
\begin{equation}\label{eq-two-nef-boundaries}
\widetilde{(E+eF)},\quad \widetilde{F}.
\end{equation}

To find the nef cone of $S^{[n]}$, we need to find the biggest nontrivial wall, that is, the smallest value of $C$. Let us call such a wall the \emph{Gieseker wall}.
 
\begin{lemma}\label{lemma_two-Gieseker-walls}
If the Gieseker wall is given by a rank $1$ wall, then
$$\ch'=(1,-F,0) \quad \text{or}\quad \ch'=(1,-E,\frac{-e}{2}).$$
\end{lemma}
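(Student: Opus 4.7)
The plan is to set up an optimization problem over rank-one destabilizing Chern characters $\ch' = (1, L, m)$ and to identify the minimizers of $C(\ch, \ch')$. Since $\ch_0' = 1 > 0$, the subobject in $\A_{\omega,\beta}$ is a sheaf in $\T_{\omega,\beta}$ (a shift from $\F_{\omega,\beta}[1]$ would have $\ch_0 \leq 0$). A short-exact-sequence analysis in the heart shows it must be torsion-free, so of the form $\OO_S(L)\otimes I_W$, and the existence of a nonzero morphism to $I_Z$ forces $-L \in \overline{\eff}(S)$. Since $\overline{\eff}(S) = \RR_{\geq 0}E + \RR_{\geq 0}F$ for both the Hirzebruch and elliptic families considered, I write $L = -pE - qF$ with $(p, q) \in \ZZ_{\geq 0}^2 \setminus\{(0, 0)\}$; Bogomolov $L^2 - 2m \geq 0$ gives $m = L^2/2 - \ell$ for some $\ell \in \ZZ_{\geq 0}$.

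Next I compute the intersection numbers
\begin{equation*}
L \cdot H = -(p(1-\lambda) + q\lambda), \quad L\cdot\gamma = -p + \lambda(q - p(e-1)), \quad L^2 = -p(pe + 2q),
\end{equation*}
and since $L\cdot H < 0$, the expression $C(\ch,\ch') = (m + n - uL\cdot\gamma)/(L\cdot H)$ is strictly decreasing in $m$. Hence the largest wall (smallest $C$) is attained at $\ell = 0$, reducing the problem to minimizing $f(p, q) := C|_{\ell=0} = N(p, q)/D(p, q)$ over $(p, q) \in \ZZ_{\geq 0}^2\setminus\{(0, 0)\}$, where $D(p, q) = p(1-\lambda) + q\lambda > 0$ is linear and $N(p, q) = \tfrac{1}{2}p(pe+2q) - n - up(1+\lambda(e-1)) + u\lambda q$ is quadratic.

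The crux is a direct comparison. Since $D(p, q) = pD(1, 0) + qD(0, 1)$, a short computation yields the identity
\begin{equation*}
N(p, q) - pN(1, 0) - qN(0, 1) = \tfrac{1}{2}ep(p-1) + pq + (p+q-1)n,
\end{equation*}
which is strictly positive whenever $p + q \geq 2$ (using $e \geq 0$, $n \geq 2$, $p, q \geq 0$). Setting $A := \min\{f(1, 0), f(0, 1)\}$, the definition of $A$ and positivity of $D$ give $N(1, 0) \geq A\,D(1, 0)$ and $N(0, 1) \geq A\,D(0, 1)$, hence $pN(1, 0) + qN(0, 1) \geq A\,D(p, q)$; combined with the displayed identity this yields $f(p, q) > A$ for every $(p, q)$ with $p + q \geq 2$. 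Therefore the minimizer of $f$ occurs only at $(p, q, \ell) = (1, 0, 0)$ or $(0, 1, 0)$, corresponding respectively to $\ch' = (1, -E, -e/2)$ and $\ch' = (1, -F, 0)$. The main obstacle is justifying the effective-cone parametrization at the start — namely identifying $\overline{\eff}(S)$ correctly for each surface in the family and ruling out $\F_{\omega,\beta}[1]$-shifts as rank-one subobjects — while the convexity identity itself reduces to the elementary arithmetic check above.
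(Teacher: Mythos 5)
Your reduction to subsheaves of the form $\OO_S(L)\otimes I_W$ with $-L$ effective, and then to $\ell=w=0$, is fine (indeed slightly more careful than the paper on the heart/torsion-free point), but the final comparison step contains a sign error that is fatal to the strategy, not just to the computation. With $L=-pE-qF$ one has $L^2=-ep^2+2pq$ (the cross term $2pq\,E.F$ is positive), not $-p(pe+2q)$. Redoing your identity with the correct numerator $N(p,q)=\tfrac{e}{2}p^2-pq-n-up(1+\lambda(e-1))+u\lambda q$ gives $N(p,q)-pN(1,0)-qN(0,1)=\tfrac{e}{2}p(p-1)-pq+(p+q-1)n$, which is not positive in general; worse, the statement you are trying to prove about \emph{potential} walls is simply false. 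On $\PP^1\times\PP^1$ (so $e=0$) with $n=2$, $\lambda=\tfrac12$, $u=0$, the character $\ch'=(1,-4E-4F,16)$ has center $C=-4.5$, strictly smaller than the value $C=-4$ for both $(1,-E,0)$ and $(1,-F,0)$; more generally $C$ for $L=-p(E+F)$ tends to $-\infty$ as $p\to\infty$. So minimizing $C$ over all rank-one characters with $-L$ effective cannot single out the two claimed walls.

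What is missing is precisely the condition that the Gieseker wall is an \emph{actual} wall: the destabilizing subsheaf must lie in the tilted heart at some point of the wall, i.e.\ the semicircle must reach into the region $s<\frac{L.H}{H^2}$ where $\OO_S(L)\otimes I_W\in\T_{\omega,\beta}$. In the counterexample the wall for $L=-4(E+F)$ spans $-8<s<-1$ while $\OO_S(L)$ lies in $\T_{\omega,\beta}$ only for $s<-8$, so that larger potential wall destabilizes nothing; your argument never imposes any such constraint (effectivity of $-L$ only gives $L.H<0$, which is a condition independent of $s$). The paper closes exactly this gap by a geometric rather than arithmetic argument: it identifies the loci actually destabilized at the $\ell(1,0)$ and $\ell(0,1)$ walls ($Z\subset E$, resp.\ $Z\subset F$ in the appropriate linear system) and uses the nestedness of the walls to get a contradiction if the outermost actual rank-one wall came from any other $L$. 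To salvage your route you would have to add an actual-wall constraint to the optimization (e.g.\ that the wall meets $\{s<\tfrac{L.H}{H^2}\}$, or a Bogomolov-type bound on both factors as in the higher-rank estimate quoted from Bertram--Coskun), at which point the problem is no longer the elementary convexity identity you wrote; the points you flagged as the main obstacles (the effective-cone parametrization and ruling out shifts $\F_{\omega,\beta}[1]$) are in fact the unproblematic parts.
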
 
\begin{proof}
The idea is the same as in \cite{ABCH13} (see also \cite{BC13}). Any destabilizing subsheaf of $I_Z$ of rank $1$ has the form $L\otimes I_W$ with Chern character $\ch'=(1,L,\frac{L^2}{2}-w)$ for some line bundle $L$ and some ideal sheaf $I_W$ of length $w\geq 0$. Then $C(\ch,\ch')=\frac{\frac{L^2}{2}+n-u L.\gamma}{L.H}+\frac{w}{-L.H}$. To guarantee $L\otimes I_W$ as an object in the heart, we need $L.H<0$. Then we write $L=-(mF+kE)$, with two nonnegative integers $m$ and $k$, and $(m,k)\neq (0,0)$. To get the biggest nontrivial wall, we must take $w=0$. Denote the line bundle on $S^{[n]}$ corresponding to the destabilizing line bundle $-(mF+kE)$ by $\ell(m,k)$. The locus contracted by $\ell(0,1)$ is $\{Z\in S^{[n]}|\ Z\subset E\}$. The locus contracted by $\ell(1,0)$ is $\{Z\in S^{[n]}|\ Z\subset F,\ Z \text{ is linear equivalent to } n (E\cap F)\}$. Assume that the smallest value is obtained by taking $(m,k)\neq (1,0)$ nor $(0,1)$. Recall that the walls are nested. But the loci contracted by $\ell(1,0)$ or $\ell(0,1)$ are also contracted by $\ell(m,k)$, which is a contradiction.
\end{proof}

The line bundles $\ell(1,0)$ and $\ell(0,1)$ depend on the initial values $\lambda$ and $u$. By (\ref{eq-center-linebundle}), we have
\begin{eqnarray*}
\ell(0,1)_{\lambda,u} &=& n\widetilde{(E+eF)}+\left((\frac{1-\lambda}{\lambda})n-u\left(2(1-\lambda)+e\lambda\right)\right)\widetilde{F}+\frac{1}{2}\widetilde{\KS}-\mathcal{B}_0;\\
\ell(1,0)_{\lambda,u} &=& \left((n-\frac{1}{2}e)(\frac{\lambda}{1-\lambda})+u(\lambda+(\frac{\lambda}{1-\lambda})(e\lambda+1-\lambda))\right)\widetilde{(E+eF)}\\
& &+\left(n-\frac{1}{2}e\right)\widetilde{F}+\frac{1}{2}\widetilde{\KS}-\mathcal{B}_0.
\end{eqnarray*}
By taking $\lambda$ to $0^+$ or $1^-$, respectively, we get two nef boundaries as in (\ref{eq-two-nef-boundaries}): 
\begin{equation*}
\ell(0,1)_{0^+,u}\xlongequal{\RR_+} \widetilde{F},\quad
\ell(1,0)_{1^-,u}\xlongequal{\RR_+} \widetilde{(E+eF)}.
\end{equation*}
Moreover, $\ell(0,1)_{\lambda,u}$ is decreasing and $\ell(1,0)_{\lambda,u}$ is increasing with respect to $\lambda$. The two types of loci are simultaneously contracted if and only if 
\begin{equation*}
\ell(0,1)_{\lambda,u}=\ell(1,0)_{\lambda,u}.
\end{equation*}
The solutions are given by
\begin{equation}\label{eq-solution}
u=U(\ch, \lambda):=\frac{\frac{n}{\lambda}-\frac{n-\frac{e}{2}}{1-\lambda}}{2+e\cdot\frac{\lambda}{1-\lambda}} \text{ and } 0<\lambda<1.
\end{equation}
Moreover, with $0<\lambda<1$, we have
\begin{equation}\label{eq-line_bundle}
\ell(0,1)_{\lambda,U(\ch, \lambda)}=\ell(1,0)_{\lambda,U(\ch, \lambda)}=
n\widetilde{(E+eF)}+(n-\frac{e}{2})\widetilde{F}+\frac{1}{2}\widetilde{\KS}-\mathcal{B}_0.
\end{equation}

\begin{theorem}\label{thm_nef-cone}
\begin{enumerate}
\item[(a)] \cite[Theorem 1(2), 1(3)]{BC13} The nef cone $\nef(\Sigma_e^{[n]})$ ($e\geq 0$, $n\geq 2$) is generated by the nonnegative combinations of
$\widetilde{(E+eF)}$, $\widetilde{F}$, and $(n-1)\widetilde{(E+eF)}+(n-1)\widetilde{F}-\frac{1}{2}\mathrm{B}$.
\item[(b)] The nef cone $\nef(S_e^{[n]})$ ($e\geq 2$, $n\geq 2$) is generated by the nonnegative combinations of
$\widetilde{(E+eF)}$, $\widetilde{F}$, and $n\widetilde{(E+eF)}+(n-1)\widetilde{F}-\frac{1}{2}\mathrm{B}$.
\end{enumerate}
\end{theorem}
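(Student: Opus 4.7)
Since $\Pic(S)$ has rank two and the exceptional divisor $\mathrm{B}$ contributes one extra class, $\Pic(S^{[n]})\otimes\RR$ has rank three, so $\nef(S^{[n]})$ is a three-dimensional closed convex cone with three extremal rays. The plan is to use Theorem~\ref{thm-geometric-meaning-decomposition-dim2} to construct three Bayer-Macr\`i nef line bundles, each contracting its own distinct family of curves, and hence each lying on a boundary face of $\nef(S^{[n]})$; this identifies them as the three extremal nef generators and therefore determines the cone.

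The first two extremal rays arise by letting $\omega$ specialize to the boundary of $\amp(S)$. By Theorem~\ref{thm-geometric-meaning-decomposition-dim2}(b) together with Example~\ref{boundary-divisor-Hilbert-schemes}, for ample $\omega$ the class $\widetilde{\omega}$ induces the Hilbert-Chow morphism $h\colon S^{[n]}\to S^{(n)}$, hence is nef; by continuity this extends to $\omega$ on the boundary. Specializing to the two extremal rays $E+eF$ and $F$ of $\overline{\amp(S)}$ yields the nef classes $\widetilde{E+eF}$ and $\widetilde{F}$; in each case $\widetilde{\omega}$ has degree zero on the additional curves contracted by the non-ample direction on $S$, so the two classes sit on boundary faces of $\nef(S^{[n]})$.

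For the third extremal ray I will use the Gieseker wall inside a half-plane $\Pi_{(H,\gamma,u)}$. By Lemma~\ref{lemma_two-Gieseker-walls}, assuming the Gieseker wall is realized by a rank-one destabilizer, the only candidate Chern characters for $\ch'$ are $(1,-F,0)$ and $(1,-E,-e/2)$, giving two candidate Bayer-Macr\`i line bundles $\ell(1,0)_{\lambda,u}$ and $\ell(0,1)_{\lambda,u}$ computed via~(\ref{eq-center-linebundle}). To obtain an extremal nef class I choose the initial parameters $(\lambda,u)$ so that the two candidate loci are simultaneously contracted, i.e., $\ell(1,0)_{\lambda,u}=\ell(0,1)_{\lambda,u}$; by~(\ref{eq-solution}) this forces $\lambda=\tfrac{1}{2}$, $u=\tfrac{e}{e+2}$. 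Plugging these values into either expression and substituting $\KS=-2(E+eF)+(e-2)F$ for the Hirzebruch case, respectively $\KS=(e-2)F$ for the elliptic case, collapses the formula to $(n-1)\widetilde{E+eF}+(n-1)\widetilde{F}-\tfrac{1}{2}\mathrm{B}$ or $n\widetilde{E+eF}+(n-1)\widetilde{F}-\tfrac{1}{2}\mathrm{B}$ respectively.

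The main obstacle is justifying the rank-one hypothesis in Lemma~\ref{lemma_two-Gieseker-walls}: one must rule out any higher-rank destabilizing Chern character $\ch'$ yielding a Bridgeland wall larger than the two rank-one candidates at the frame $(\lambda,u)=(\tfrac{1}{2},\tfrac{e}{e+2})$. This is the standard Mori-Bridgeland technical step and can be handled by combining Bertram's nested wall theorem (Theorem~\ref{thm-Maciocia}) with Bogomolov-type bounds on destabilizing subobjects of rank~$r$, showing that all higher-rank walls nest inside the two rank-one candidate walls at this frame. Two ancillary checks are that a global Bayer-Macr\`i map is available for the surfaces under consideration (beyond the K3 and $\PP^{2}$ cases of Theorem~\ref{theorem-global-BM}), which for $\ch=(1,0,-n)$ follows by a direct gluing of universal families over $S^{[n]}$, and that each candidate ray is genuinely extremal, witnessed by the explicit covering families of curves $\{Z\subset E\}$ and $\{Z\subset F,\ Z\sim n(E\cap F)\}$ contracted by the respective line bundles as described in the proof of Lemma~\ref{lemma_two-Gieseker-walls}.
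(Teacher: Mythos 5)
Your overall route is the paper's: realize $\widetilde{(E+eF)}$ and $\widetilde{F}$ as limits of the Bayer--Macr\`i classes as $\omega$ degenerates to the boundary of $\amp(S)$, and produce the third generator from the two rank-one candidate walls of Lemma~\ref{lemma_two-Gieseker-walls}, made to coincide at the frame $\lambda=\tfrac12$, $u=\tfrac{e}{e+2}$ of~(\ref{eq-solution}); substituting $\KS$ and $\mathcal{B}_0=\tfrac12\mathrm{B}$ (Example~\ref{boundary-divisor-Hilbert-schemes}) into~(\ref{eq-center-linebundle}) then gives exactly the stated classes. All of that, however, is set up in the paper \emph{before} the theorem; the actual proof of Theorem~\ref{thm_nef-cone} consists of precisely the step you defer, namely ruling out higher-rank walls at this particular frame, and here your proposal has a genuine gap: you assert that ``Bogomolov-type bounds'' will show all rank-$k$ walls ($k\ge 2$) nest inside the coincident rank-one walls, but you neither state nor verify any inequality. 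The paper does this concretely: by the estimate of \cite[Section 5]{BC13}, the center $C_k$ of a rank-$k$ wall satisfies $C_k^2\le\left(u^2+\tfrac{2n}{H^2}\right)\tfrac{(2k-1)^2}{(2k-1)^2-1}\le\tfrac98\left(u^2+\tfrac{2n}{H^2}\right)$, and since at the chosen frame $H^2=\tfrac{e}{4u}$ with $u=\tfrac{e}{e+2}$, one checks $\left(u^2+\tfrac{8nu}{e}\right)\tfrac98<(-2n+u)^2=C^2$, where $C=-2n+\tfrac{e}{e+2}$ is the common center of the two rank-one walls. This verification genuinely uses the specific values of $\lambda$ and $u$; without it you have not shown that the Gieseker wall is the rank-one wall, so the third extremal class is not pinned down and Lemma~\ref{lemma_two-Gieseker-walls} alone (which is conditional on the rank-one hypothesis) does not suffice.

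Two smaller points. First, nefness of $\ell_{\sigma\in W(\ch,\ch')}$ needs only the local Bayer--Macr\`i map on the closure of the Gieseker chamber (Theorem~\ref{thm-Bayer-Macri}) together with the identifications assumed in Theorem~\ref{thm-geometric-meaning-decomposition-dim2}; your proposed ``direct gluing of universal families'' to get a global map beyond Theorem~\ref{theorem-global-BM} is not carried out in the paper either and is not what the argument rests on. Second, the opening claim that a three-dimensional nef cone ``has three extremal rays'' does not follow from the Picard-rank count (a $3$-dimensional cone may have many extremal rays); that the three classes span the whole nef cone is deduced, as in \cite{BC13}, from the dual curve classes --- the Hilbert--Chow fibers and the loci $\{Z\subset E\}$ and $\{Z\subset F,\ Z\sim n(E\cap F)\}$ --- which you do mention, so this is a matter of ordering the logic rather than a missing ingredient.
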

\begin{proof}
We only need to show that the Gieseker wall is not a higher rank wall in the case of (\ref{eq-solution}). Note that the line bundle in (\ref{eq-line_bundle}) is independent of the $\lambda$. So it is enough to check for the case $\lambda=\frac{1}{2}$. Now we have $u=U(\ch,\frac{1}{2})=\frac{e}{e+2}$. The two walls given by Lemma~\ref{lemma_two-Gieseker-walls} coincide with the center $C=-2n+\frac{e}{e+2}$. By the estimation formula from \cite[Section 5]{BC13}, the center $C_{k}$ of a rank $k$ wall ($k\geq 2$) is bounded by 
\begin{equation*}
C_{k}^2\leq \left(u^2+\frac{2n}{H^2}\right)\frac{(2k-1)^2}{(2k-1)^2-1}\leq \left(u^2+\frac{2n}{H^2}\right)\frac{9}{8}.
\end{equation*}
Now $H^2=\frac{e}{4u}$, and $u=\frac{e}{e+2}$. It is easy to check that
\begin{equation*}
\left(u^2+\frac{8nu}{e}\right)\frac{9}{8}<\left(-2n+u\right)^2.
\end{equation*}
So $C_{k}^2<C^2$. Therefore, higher rank walls are strictly inside the wall given by center $C$. By (\ref{eq-center-linebundle}) and equation~(\ref{eq-line_bundle}), the extremal nef line bundle corresponding to $C$ is 
\begin{equation*}\label{eq-third-nef}
n\widetilde{(E+eF)}+(n-\frac{e}{2})\widetilde{F}+\frac{1}{2}\widetilde{\KS}-\mathcal{B}_0.
\end{equation*}
The nef cone of $S^{[n]}$ is generated by the nonnegative combinations of 
\begin{equation*}
\widetilde{(E+eF)},\quad \widetilde{F},\quad n\widetilde{(E+eF)}+(n-\frac{e}{2})\widetilde{F}+\frac{1}{2}\widetilde{\KS}-\mathcal{B}_0.
\end{equation*} 
Recall that $\mathcal{B}_0=\frac{1}{2}\mathrm{B}$ in Example~\ref{boundary-divisor-Hilbert-schemes}. The proof is completed by using $\KS=-2(E+eF)+(e-2)F$ or $(e-2)F$, respectively.
\end{proof}

The above computation suggests that the number $u$ plays an important role in order to find the extremal nef line bundle, and in general $u\neq 0$, that is, $\omega$ is not parallel to $\beta$. The nef cone of $\Sigma_e^{[n]}$ has been obtained by Bertram and Coskun \cite{BC13}. The nef cone of $S_2^{[n]}$ ($n\geq 2$) has been obtained by J. Li and W.-P. Li \cite{LL10}. Both of the results use the notion of $k$-very ample line bundles (see \cite{BSG91}).

\appendix

\section{Twisted Gieseker stability and the large-volume limit}\label{appendix-large-volume-limit}
\begin{definition}\label{def-twisted-Gieseker}  (\cite[Definition 3.4]{EG95}, \cite[Definition 4.1]{FQ95}, \cite[Definition 3.2]{MW97})
Let $\omega,\alpha\in \NS(S)_{\QQ}$ with $\omega$ ample. For $E\in \Coh(S)$, we denote the leading coefficient of $\chi (E\otimes \alpha^{-1}\otimes \omega^{\otimes m})$ with respect to $m$ by $a_d$. A coherent sheaf $E$ of dimension $d$ is said to be $\alpha$-twisted $\omega$-Gieseker-(semi)stable  if $E$ is pure and for all $0\neq F\subsetneq (\subseteq) E$,
\begin{equation}\label{twisted}
\frac{\chi (F\otimes \alpha^{-1}\otimes \omega^{\otimes m})}{a_d(F)}<(\leq)
\frac{\chi (E\otimes \alpha^{-1}\otimes \omega^{\otimes m})}{a_d(E)}
\quad\text{for } m\gg 0.
\end{equation}
We also write the $\alpha$-twisted $\omega$-Gieseker-(semi)stability as the $(\alpha,\omega)$-Gieseker (semi)stability. Denote $M_{(\alpha,\omega)}(\ch)$ (if it exists) as the moduli space of $(\alpha,\omega)$-semistable sheaves $E$ with $\ch(E)=\ch$.
\end{definition}

\begin{proposition-definition}\cite[Theorem 1.2]{LQ14} \label{def-Gieseker-chamber}
Fix $\ch=(\ch_0,\ch_1,\ch_2)$. Fix a frame $(H,\gamma,u)$, and consider 
$\sigma_{\omega,\beta}$ on the $(s,t)$-half-plane $\Pi_{(H,\gamma,u)}$ (Definition~\ref{def-coordinate}). Denote $s_0:=\frac{\ch_1.H}{\ch_0 H^2}$ if $\ch_0\neq 0$. We always fix the relation
\begin{equation}\label{alpha1}
\alpha=\beta-\frac{1}{2}\KS.
\end{equation}
\begin{itemize}
\item[$\mathtt{TC}$] If $\ch=(0,0,n)$ with positive integer $n$, then there is \emph{no wall}, and $t>0$ is the \emph{trivial chamber} in $\Pi_{(H,\gamma,u)}$. And
$$M_{\sigma_{\omega,\beta}}(\ch)=M_{(\alpha,\omega)}(\ch)=\mathrm{Sym}^n(S). $$ 
\item[$\mathtt{SC}$] If $\ch_0=0$, and $\ch_1.H>0$, we define the chamber for $t\gg 0$ as the \emph{Simpson chamber with respect to $(H,\gamma,u)$}. Then $$M_{\sigma_{\omega,\beta}\in \mathtt{SC}}(\ch)=M_{(\alpha,\omega)}(\ch).$$
And the $(\alpha,\omega)$-Gieseker semistability is the Simpson semistability defined by the slope $\frac{\ch_2(E)-\ch_1(E).\beta}{\omega.\ch_1(E) }$.
\item[$\mathtt{GC}$] If $\ch_0>0$, we define the chamber for $t\gg 0$ and $s<s_0$ as the \emph{Gieseker chamber with respect to $(H,\gamma,u)$}. If $\ch$ satisfies condition $\mathrm{(C)}$, then
$$M_{\sigma_{\omega,\beta}\in \mathtt{GC}}(\ch)\cong M_{(\alpha,\omega)}(\ch).$$
\item[$\mathtt{UW}$] If $\ch_0>0$, we define the wall $t>0$ and $s=s_0$, that is, $\Im Z(\ch)=0$ as the \emph{Uhlenbeck wall with respect to $(H,\gamma,u)$}.
\item[$\mathtt{DGC}$] If $\ch_0<0$, we define the chamber for $t\gg 0$ and $s>s_0$ as the \emph{dual Gieseker chamber with respect to $(H,\gamma,u)$}. If $-(\ch)^*$ satisfies condition $\mathrm{(C)}$, then by Lemma~\ref{lem-duality},
$$M_{\sigma_{\omega,\beta}\in \mathtt{DGC}}(\ch)\cong M_{\sigma_{\omega,-\beta}\in \mathtt{GC}}(-(\ch)^*).$$
\item[$\mathtt{DUW}$] If $\ch_0<0$, we define the wall $t>0$ and $s=s_0$, that is, $\Im Z(\ch)=0$ as the \emph{dual Uhlenbeck wall with respect to $(H,\gamma,u)$}. If $-(\ch)^*$ satisfies condition $\mathrm{(C)}$, then
$$M_{\sigma_{\omega,\beta}\in \mathtt{DUW}}(\ch)\cong U_{\omega}(-(\ch)^*),$$
where $U_{\omega}(-(\ch)^*)=U_{H}(-(\ch)^*)$ is the Uhlenbeck compactification of the moduli space $M^{\mathrm{lf}}_{\omega}(-(\ch)^*)$ of locally free sheaves with invariant $-(\ch)^*$.
\end{itemize}
\end{proposition-definition}

\section{Bayer--Macr\`i decomposition on K3 surfaces by using $\hat{\sigma}_{\omega,\beta}$}\label{appendix-K3}

Let $S$ be a smooth projective surface. By some physical hints (e.g., \cite[Section 6.2.3]{Asp05}), the central charge is often taken as (e.g., \cite{Bri08,BM14a})
\begin{equation}\label{eq-Z-twisted}
\hat{Z}_{\omega,\beta}(E):=-\int_{S} e^{-(\beta+\sqrt{-1}\omega)}.\ch(E).\sqrt{\td{(S)}}.
\end{equation}
Similarly to Lemma~\ref{lem-central-charge}, one can check that
\begin{equation}
\hat{Z}_{\omega,\beta}(E)= \langle \mho_{\hat{Z}}, v(E) \rangle_S, \quad\text{where}\quad
\mho_{\hat{Z}_{\omega,\beta}} := e^{\beta-\frac{1}{2}\KS+\sqrt{-1}\omega}. \label{Omega-Z-hat}
\end{equation}
Write $\mho_{\hat{Z}_{\omega,\beta}}$ as $\mho_{\hat{Z}}$. Basic computation shows that
\begin{equation*}
\langle \mho_Z, \mho_Z \rangle_S = \chi(\OO_S)-\frac{1}{4}\KS^2,\quad \langle \mho_{\hat{Z}}, \mho_{\hat{Z}} \rangle_S = -\frac{1}{8}\KS^2.
\end{equation*}
Recall from \cite{Bri09} and \cite{BB17} that a numerical stability condition $\sigma$ is called \emph{reduced} if the corresponding $\pi(\sigma)$ satisfies $\langle \pi(\sigma), \pi(\sigma) \rangle_S =0$.

In the following, we always \emph{assume} that $S$ is a smooth projective K3 surface, and \emph{assume} that $\hat{Z}_{\omega,\beta}(F)\notin \RR_{\leq 0}$ for all spherical sheaves $F\in\Coh(S)$. Then $\hat{\sigma}_{\omega,\beta}=(\hat{Z}_{\omega,\beta}, \A_{\omega,\beta})$ is a reduced numerical geometric Bridgeland stability condition (see \cite[Lemma 6.2]{Bri08}). Let $\mathbf{v}=v(\ch)\in H^*_{\mathrm{alg}}(S,\ZZ)$ be a primitive class with $\langle\mathbf{v},\mathbf{v}\rangle_S>0$. Define $\hat{w}_{\omega,\beta}:=\hat{w}_{\hat{\sigma}}:=-\Im \left(\overline{\langle \mho_{\hat{Z}}, \mathbf{v} \rangle_S}\cdot \mho_{\hat{Z}}\right)$. Define $\ell_{\hat{\sigma},\mathcal{E}}$ similarly to that in (\ref{Bayer-Macri-def}) but use $\hat{Z}$ instead. Then 
$$\ell_{\hat{\sigma}_{\omega,\beta}}\xlongequal{\RR_+}\theta_{\hat{\sigma},\mathcal{E}}(\hat{w}_{\omega,\beta}).$$
Fix a frame $(H,\gamma,u)$. The potential walls $\hat{W}(\ch,\ch')$ in the $(s,t)$-model
are given by semicircles (or in the $(s,q)$-model are given by semilines)
\begin{equation}\label{eq-wall-K3}
(s-C)^2+t^2=C^2+D+\frac{2}{H^2} \quad (\text{or } q=Cs+\frac{1}{2}D+\frac{1}{H^2}),
\end{equation}
where $C$ and $D$ are defined in Theorem~\ref{thm-Maciocia}. 
There is a global Bayer--Macr\`i map (see \cite[Theorem 1.2]{BM14b}).

\begin{theorem}\label{thm-K3-BM-decomposition} (Bayer--Macr\`i decomposition on K3 surfaces.)
Use the notation and assumptions as above.
\begin{itemize}
\item If $\ch_0=0$ and $\ch_1.H>0$, then the Bayer--Macr\`i line bundle has a decomposition
\begin{equation}\label{eq-line-bundle-dim1-hat}
\ell_{\hat{\sigma}\in \hat{W}(\ch,\ch')}\xlongequal{\RR_+}\left(\frac{g}{2}D(\ch,\ch')+\frac{d}{2}u^2\right)\mathcal{S}-\mathcal{T}_{(H,\gamma,u)}(\ch).
\end{equation}
The line bundle $\mathcal{S}$ induces the support morphism.
\item If $\ch_0>0$, then the Bayer--Macr\`i line bundle has a decomposition
\begin{equation}
\ell_{\hat{\sigma}\in \hat{W}(\ch,\ch')}\xlongequal{\RR_+}-C\widetilde{H}-u\widetilde{\gamma}-\mathcal{B}_0.
\end{equation}
The line bundle $\widetilde{\omega}$ (or $\widetilde{H}$) induces the Gieseker--Uhlenbeck morphism. 
\end{itemize}
\end{theorem}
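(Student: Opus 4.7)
The plan is to parallel the proofs of Lemma~\ref{lem-decomposition-dim1}, Lemma~\ref{lem-decomposition-dim2} and Theorem~\ref{thm-geometric-meaning-decomposition-dim2}, replacing $\mho_Z$ by $\mho_{\hat Z}$ throughout and exploiting the K3 simplifications $\KS=0$, $\chi(\OO_S)=2$. The existence of the global Bayer-Macr\`i map on K3 \cite{BM14b} makes the line bundles $\widetilde{L}$, $\mathcal{S}$, $\mathcal{T}_{(H,\gamma,u)}(\ch)$ and $\mathcal{B}_0$ globally well-defined on $M_{\hat\sigma}(\ch)$.

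The first step is the hat analog of Lemma~\ref{lem-direct-relations-using-centers}. Since $\KS=0$, one has $\mho_{\hat Z}=(1,\beta,\tfrac{1}{2}(\beta^2-\omega^2))+\sqrt{-1}\,(0,\omega,\beta.\omega)$, and a short direct computation from (\ref{eq-Z-twisted}) gives $\hat Z(\ch)=Z(\ch)-\ch_0$, so that $\Im\hat Z=\Im Z$ but $\mu_{\hat\sigma}(\ch)\,t+s=\mu_\sigma(\ch)\,t+s+\frac{x}{g(y_1-xs)}$. Substituting the hat wall equation (\ref{eq-wall-K3}), i.e. $s^2+t^2=2sC+D+\tfrac{2}{g}$, into the formula for $\mu_\sigma(\ch)\,t+s$ from the proof of Lemma~\ref{lem-direct-relations-using-centers} and simplifying produces the clean identities
$$\mu_{\hat\sigma}(\ch)\omega+\beta=CH+u\gamma,\qquad \beta.(\mu_{\hat\sigma}(\ch)\omega+\beta)-\tfrac{1}{2}(\omega^2+\beta^2)=-\tfrac{g}{2}D-\tfrac{d}{2}u^2-1,$$
where $C=C(\ch,\ch')$ and $D=D(\ch,\ch')$ are those of Theorem~\ref{thm-Maciocia}; crucially, the same $C$ appears here as in the non-hat case.

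For the dimension-one case ($\ch_0=0$, $\ch_1.H>0$), $C=\frac{z+duy_2}{gy_1}$ is independent of $\ch'$, and on K3 the vector $\mathbf{t}_{(H,\gamma,u)}(\ch)$ reduces to $(1,CH+u\gamma,-1)$, with the perpendicularity relations $\mathbf{t}_{(H,\gamma,u)}(\ch),\,(0,0,-1)\in\mathbf{v}^\perp$ checked directly from $(CH+u\gamma).\ch_1=\ch_2$ and the Mukai pairing (\ref{Mukaipairingsurface}) simplified by $\KS=0$. Comparing with $\hat w_{\hat\sigma}\xlongequal{\RR_+}\mu_{\hat\sigma}(\ch)\Im\mho_{\hat Z}+\Re\mho_{\hat Z}$ and using Step~2 yields
$$\hat w_{\hat\sigma\in\hat W(\ch,\ch')}\xlongequal{\RR_+}\bigl(\tfrac{g}{2}D+\tfrac{d}{2}u^2\bigr)(0,0,-1)+\mathbf{t}_{(H,\gamma,u)}(\ch),$$
from which (\ref{eq-line-bundle-dim1-hat}) follows after applying $\theta_{\hat\sigma,\mathcal{E}}$. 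For the dimension-two case ($\ch_0>0$), the proof of Lemma~\ref{lem-decomposition-dim2} goes through once (\ref{eq-check1}) is replaced by its hat version $\mu_{\hat\sigma}(\ch)\omega\cdot(\tfrac{\ch_1}{\ch_0}-\beta)=\tfrac{\ch_2}{\ch_0}-\tfrac{1}{2}(\omega^2-\beta^2)-\tfrac{\ch_1}{\ch_0}.\beta+1$, the extra $+1$ absorbing $\hat Z-Z=-\ch_0$. Since $\KS=0$ makes $\mathbf{u}(\ch)=\mathbf{w}(\ch)$ (cf.\ Notation~\ref{notation-supp-dim2}) so that $\theta_{\hat\sigma,\mathcal{E}}(\mathbf{w}(\ch))=-\mathcal{B}_0$, applying the Mukai morphism and using the first identity of Step~2 together with the linearity $\mu_{\hat\sigma}(\ch)\widetilde\omega+\widetilde\beta=\widetilde{\mu_{\hat\sigma}(\ch)\omega+\beta}=C\widetilde H+u\widetilde\gamma$ produces $\ell_{\hat\sigma}\xlongequal{\RR_+}-C\widetilde H-u\widetilde\gamma-\mathcal{B}_0$.

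The geometric statements are standard references: $\widetilde\omega$ inducing the Gieseker-Uhlenbeck morphism is \cite[Theorem 8.2.8]{HL10} exactly as in Theorem~\ref{thm-geometric-meaning-decomposition-dim2}(b), and $\mathcal{S}$ inducing the support morphism on K3 is \cite[Lemma 11.3]{BM14b}. The main obstacle is bookkeeping rather than conceptual: one must verify that the $-\ch_0$ correction in $\hat Z$ is precisely cancelled by the $(0,0,-1)$ shift between $\Re\mho_{\hat Z}$ and $\Re\mho_Z$ on K3, so that the \emph{same} center $C(\ch,\ch')$ from Theorem~\ref{thm-Maciocia} appears in the final formula despite the shifted wall equation (\ref{eq-wall-K3}); checking this compatibility is what allows the whole argument to recycle the non-twisted computations without introducing a modified $\hat C$.
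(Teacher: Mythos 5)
Your proposal is correct and follows exactly the route the paper intends for Appendix~\ref{appendix-K3} (which states Theorem~\ref{thm-K3-BM-decomposition} without written proof, as a "parallel computation"): replace $\mho_Z$ by $\mho_{\hat Z}$, use $\KS=0$, $\chi(\OO_S)=2$, and check that the shift $\hat Z=Z-\ch_0$ is exactly compensated by the $\tfrac{2}{H^2}$ shift in the wall equation~(\ref{eq-wall-K3}), so the same $C(\ch,\ch')$, $D(\ch,\ch')$ govern the decomposition. Your key identities ($\mu_{\hat\sigma}(\ch)\omega+\beta=CH+u\gamma$ and $\beta.(\mu_{\hat\sigma}(\ch)\omega+\beta)-\tfrac12(\omega^2+\beta^2)=-\tfrac{g}{2}D-\tfrac{d}{2}u^2-1$ on $\hat W(\ch,\ch')$, with $\mathbf{t}_{(H,\gamma,u)}(\ch)=(1,CH+u\gamma,-1)$ and $\mathbf{u}=\mathbf{w}$ on K3) all check out, and the citations for the support and Gieseker--Uhlenbeck morphisms are the ones the paper itself uses.
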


\end{document}